\documentclass[12pt, a4paper]{amsart}
\usepackage[english]{babel}
\usepackage{amsmath,enumerate, amsfonts, amssymb,amsthm, xypic}
\usepackage[dvips]{graphics}
\usepackage{color}
\usepackage{graphicx}
\usepackage{hyperref}

\newtheorem{thm}{Theorem}[section]
\newtheorem{lem}[thm]{Lemma}
\newtheorem{prop}[thm]{Proposition}
\newtheorem{cor}[thm]{Corollary}

\theoremstyle{definition}
\newtheorem{defi}[thm]{Definition}

\newtheorem{ex}[thm]{Example}
\newtheorem{rem}[thm]{Remark}

\setlength{\textwidth}{\paperwidth}
\addtolength{\textwidth}{-6cm}
\setlength{\textheight}{\paperheight}
\addtolength{\textheight}{-5cm}
\addtolength{\textheight}{-\headheight}
\addtolength{\textheight}{-\headsep}
\addtolength{\textheight}{-\footskip}
\setlength{\oddsidemargin}{0.5cm}
\setlength{\evensidemargin}{0.5cm}
\setlength{\topmargin}{-0.5cm}

\pagestyle{plain}




\DeclareMathOperator{\R}{\mathbb R}
\DeclareMathOperator{\PP}{\mathbb P}

\DeclareMathOperator{\N}{\mathcal N}

\DeclareMathOperator{\E}{\mathcal E}
\DeclareMathOperator{\ES}{\mathcal E_S}
\DeclareMathOperator{\Z}{\mathbb Z}
\DeclareMathOperator{\LL}{\mathbb L}
\DeclareMathOperator{\Sph}{\mathbf S}

\DeclareMathOperator{\id}{id}

\DeclareMathOperator{\lk}{lk}

\DeclareMathOperator{\Var}{Var}
\DeclareMathOperator{\SA}{SA}
\DeclareMathOperator{\AS}{AS}
\DeclareMathOperator{\Reg}{Reg}

\DeclareMathOperator{\jac}{jac}
\DeclareMathOperator{\mult}{mult}
\DeclareMathOperator{\eps}{\epsilon}

\DeclareMathOperator{\D}{\mathcal D}
\DeclareMathOperator{\DS}{\mathcal D_S}\DeclareMathOperator{\DT}{\mathcal D_T}
\DeclareMathOperator{\DAS}{\mathcal D^{\AS}}
\DeclareMathOperator{\DSAS}{\mathcal D^{\SA}_S}
\DeclareMathOperator{\DNS}{\mathcal D^{\N}_S}
\DeclareMathOperator{\LSAS}{\Lambda ^{\SA}_S}

\DeclareMathOperator{\M}{\mathcal M}
\DeclareMathOperator{\MAS}{\mathcal M^{AS}}
\DeclareMathOperator{\MASS}{\mathcal M^{AS}_S}
\DeclareMathOperator{\MNS}{\mathcal M^{\N}_{S}}
\DeclareMathOperator{\MS}{\mathcal M_{S}}

\DeclareMathOperator{\K}{K_0(\R Var)}
\DeclareMathOperator{\KS}{K_0(\R Var_{S})}
\DeclareMathOperator{\KblS}{K_0^{bl}(\R Var_{S})}
\DeclareMathOperator{\KSA}{K_0(SA)}
\DeclareMathOperator{\KregS}{K_0(Const_S)}
\DeclareMathOperator{\KSAS}{K_0(SA_S)}
\DeclareMathOperator{\KSAT}{K_0(SA_T)}
\DeclareMathOperator{\KAS}{K_0(AS)}
\DeclareMathOperator{\KASS}{K_0(AS_S)}

\DeclareMathOperator{\KN}{K_0^{bl}(\mathcal N)}
\DeclareMathOperator{\KNS}{K_0^{bl}(\mathcal N_{S})}
\DeclareMathOperator{\bAS}{\beta^{AS}}
\DeclareMathOperator{\LS}{\lambda_S}
\DeclareMathOperator{\Ls}{\Lambda_s}

\DeclareMathOperator{\un}{\mathbf 1}
\DeclareMathOperator{\tEI}{\widetilde E^+_I}
\DeclareMathOperator{\tEJ}{\widetilde E^+_J}
\DeclareMathOperator{\tE0I}{\widetilde E^{0,+}_I}

\title{On relative Grothendieck rings and algebraically constructible functions}
\author{Goulwen Fichou}
\thanks{The author wish to thank Jean-Baptiste Campesato, Michel Coste, Toshizumi Fukui and Adam Parusi\'nski for useful discussions, and is deeply grateful to the UMI PIMS of the CNRS where this project has been carried out. He has also received support from ANR-15-CE40-0008 (D\'efig\'eo).}
\address{IRMAR (UMR 6625), Universit\'e de Rennes 1, Campus de Beaulieu, 35042 Rennes Cedex, France}

\date\today
\subjclass[2010]{}
\keywords{}

\begin{document}

\begin{abstract} We investigate Grothendieck rings appearing in real geometry, notably for arc-symmetric sets, and focus on the relative case in analogy with the properties of the ring of algebraically constructible functions defined by McCrory and Parusi\'nski. We study in particular the duality and link operators, including its behaviour with respect to motivic Milnor fibres with signs.
\end{abstract}
\maketitle
Let $S\subset \R^n$ be a semialgebraic set. A semialgebraically constructible function on $S$ is an integer valued function that can be written as a finite sum $\sum_{i\in I} m_i \un_{S_i}$ where for each $i\in I$, $m_i$ is an integer and $\un_{S_i}$ is the characteristic function of a semialgebraic subset $S_i$ of $S$. The sum and product of two semialgebraically constructible functions are again semialgebraically constructible, so that they form a commutative ring, denoted by $F(S)$. 
An important tool for the study of the ring $F(S)$ is the integration along the Euler characteristic \cite{Sch,V}, which can be defined using the Euler characteristic with compact supports as a measure \cite{Co}. It enables to define a push-forward associated with semialgebraic mappings. Two operators of particular interest exist on $F(S)$, the duality and the link, and they are related to local topological properties of semialgebraic sets.

Dealing with real algebraic sets rather than semialgebraic ones, the push-forward along a regular mapping of the characteristic function of a real algebraic set can not be expressed in general as a linear combination of characteristic functions of real algebraic sets. Nevertheless, the set of all such push-forwards forms a subring $A(S)$ of $F(S)$, which is endowed with the same operations. That ring $A(S)$ of algebraically constructible functions, introduced by McCrory and Parusi\'nski in \cite{MCP}, gives rise to a huge number of invariants for semi-algebraic sets to be locally homeomorphic to real algebraic sets. Another interesting ring in between $F(S)$ and $A(S)$ is the ring $N(S)$ of 
Nash constructible functions, related to the study of the arc-symmetric sets of Kurdyka \cite{K}.

\vskip 5mm

Cluckers and Loeser noticed in the introduction of \cite{CL} that $F(S)$ is isomorphic to the relative Grothendieck ring of semialgebraic sets over $S$, the push-forward corresponding to the composition with a semialgebraic mapping (cf. Proposition \ref{K0-const}). Our aim is the paper is to continue the analogy further in order to relate the rings of algebraically constructible functions and Nash constructible functions to Grothendieck rings appearing in real geometry. Since the development of the theory of motivic integration by Denef and Loeser \cite{DL}, various Grothendieck rings have been considered in real geometry: in the semialgebraic setting, where the corresponding Grothendieck ring happens to be isomorphic to $\Z$ (\cite{Q} and \cite{KoP} for motivic zeta functions in that context), in the algebraic setting \cite{MCPvirt,F,C}, sometimes with a group action \cite{Pr}, and for arc-symmetric sets \cite{C}. We focus in the present paper on Grothendieck rings relative to a base variety $S$, meaning that the building blocks consist of mapping $h:X\to S$ in the corresponding categories. Such relative Grothendieck rings have not been under investigating in the real context yet (except in \cite{C} on the base variety $\R^*$ in order to obtain a Thom-Sebastiani type formula), whereas the motivic zeta functions arriving in the local study already give rise to many interesting invariants (cf. \cite{C3} for a recent survey on the subject). 

\vskip 5mm
In this paper, we study several Grothendieck rings in real geometry, defined with respect to different classes of functions appearing naturally in that context (semialgebraic, arc-analytic \cite{K}, rational continuous \cite{K,Ko}, Nash \cite{Shiota}, and regular functions). We prove in particular that the Grothendieck ring of rational continuous functions is isomorphic to that of real algebraic varieties (Proposition \ref{regu}). We determine also completely the Grothendieck ring of arc-symmetric sets (Theorem \ref{thm-AS}), describing in term of piecewise homeomorphism the equality of two classes (Theorem \ref{LL}).

Assume $S$ is a real algebraic variety.
We show that the relative Grothendieck ring $\KS$ of real algebraic varieties over $S$ maps surjectively to the ring $A(S)$ of algebraically constructible functions, together with an analogous statement for the ring $N(S)$ (Theorem \ref{prop-alg}). Passing to a localisation $\MS$ of $\KS$, we define a duality operator on $\MS$ using the approach developed by Bittner in \cite{B}. This duality shares many properties with the duality defined on the ring of constructible functions, as developed in section \ref{sect-dual} of the present paper. We prove in particular that the duality acts on the motivic Milnor fibres associated with a real polynomial function as if it were the class of a nonsingular variety proper over $S$ (Theorem \ref{dualM}). Along the way, we produce an elementary formula for the motivic Milnor fibres with sign associated with a non-degenerate weighted homogeneous polynomials (Proposition \ref{propM}). 

We introduce also an operator on $\MS$ sharing some properties with the link on constructible functions, but overall we define a local analogue for the link operator (cf. Theorem \ref{thm-l}) when we pass to the virtual Poincar\'e polynomial \cite{MCPvirt}, an algebraic strengthening of the Euler characteristic with compact supports.

\section{Grothendieck rings in real geometry}\label{sect-Gro}

A large class of functions play a role in real algebraic geometry, from semialgebraic functions to regular or polynomial functions, passing to arc-analytic, rational continuous or Nash functions. We consider in this section the different Grothendieck rings associated with these classes of functions, establishing some relations between them. We have a particular focus in this paper on Grothendieck rings relative to a base variety, as introduced in Looijenga \cite{looi} for motivic integration. That point of view has not been much considered in real geometry, except in the approach of Campesato towards a Thom-Sebastiani formula \cite{C}. Our interest in the relative setting is to make some links with the theory of constructible functions in section \ref{sect-cons}.

\subsection{Semialgebraic sets}
A semialgebraic set is a set belonging to the Boolean algebra generated by subsets of some $\R^n$, with $n\in \mathbb N$, defined by polynomial equalities and inequalities. We refer to \cite{BCR} for general background about semialgebraic sets.

The Grothendieck ring $\KSA$ of semialgebraic sets is the quotient of the free abelian group on symbols $[X]$, for each semialgebraic set $X$, by the relation $[X]=[Y]$ if $X$ and $Y$ are semialgebraically homeomorphic, and the scissor relation $[X]=[X\setminus Y]+[Y]$ for $Y\subset X$ a semialgebraic subset of $X$. The ring structure is induced by the product of semialgebraic sets. The ring $\KSA$ have been first studied by Quarez \cite{Q}. It is shown there that the cell decomposition theorem for semialgebraic sets enables to prove that the Grothendieck ring $\KSA$ is isomorphic to the ring of integers via the Euler characteristic with compact supports $\chi_c:\KSA \to \Z$.

\vskip 5mm

For $S$ a semialgebraic set, the relative Grothendieck ring $\KSAS$ associated with a semialgebraic set $S$ is the free abelian group generated by the classes $[h:X\to S]$ of semialgebraic morphism $h:X\to S$, modulo the relations
$$[h:X\to S] = [h_{|X\setminus Y}:X\setminus Y\to S] + [h_{|Y}:Y\to S],$$
where $Y\subset X$ is a semialgebraic subset of $X$. The ring structure is given by fibred products over $S$. Note that we have a $\KSA$-linear mapping $\KSAS \to \KSA$ defined by assigning the value $[X]$ to $[h:X\to S]$.

\subsection{Real algebraic varieties}
The Grothendieck $K_0(\Var_k)$ of algebraic varieties over a based field $k$ has been widely studied, and the study includes the case $k=\R$, where an algebraic variety over $\R$ should be understood in the scheme-theoretic sense. We are interested however in this paper in real algebraic varieties in the sense of \cite{BCR}, meaning that we consider affine real algebraic subsets of some affine space together with the sheaf of regular functions, i.e. rational functions such that the denominators do not vanish on the source space.
For our purpose, it is sufficient to consider only affine varieties since any quasiprojective real variety is affine.

Denote by $\K$ the Grothendieck group of real algebraic varieties, defined as the free abelian group on isomorphism classes of real algebraic varieties, 
modulo the scissor relation $[X] = [X\setminus Y]+[Y]$, where $Y \subset X$ is a closed subvariety.
The group $\K$ carries a ring structure induced by the product of varieties. We denote by $\LL$ the class
of the affine line $[A^1] \in \K$. Let $\M$ be the localization $\K[\LL^{-1}]$. 

\begin{rem} We have a natural morphism $K_0(\Var_{\R}) \to \K$ consisting of taking the real points of an algebraic variety over $\R$. It is not injective since any algebraic variety over $\R$ without real point is sent to zero.
\end{rem}

\begin{rem}\label{rem-const} A Zariski constructible subset of $\R^n$ is a set belonging to the Boolean algebra generated by algebraic subsets of $\R^n$. A Zariski constructible set $C$ is in particular a finite disjoint union $X=\sqcup_i X_i\setminus Y_i$ of locally closed sets, where $Y_i\subset X_i$ is a closed algebraic subset of $X_i$. By the additivity relation, the class
$$[C]:=\sum_i [X_i]-[Y_i]\in \K$$
does not depend on the description of $C$.
\end{rem}

The Euler characteristic with compact supports still gives a realisation of $\K$ in $\Z$, but we know a better realisation via the virtual Poincar\'e polynomial $\beta:\K\to \Z[u]$ defined by McCrory and Parusi\'nski in \cite{MCPvirt}, and whose evaluation at $u=-1$ recovers $\chi_c$.

\begin{thm}(\cite{MCPvirt, MCPI})\label{thm-virt} There exists a ring epimorphism $\beta:\K\to \Z[u]$ characterised by the fact that $\beta([X])$ is equal to the (classical) Poincar\'e polynomial of $X$, namely
$$\beta([X])=\sum_{i=0}^{\dim X}\dim H_i(X,\Z/2\Z) u^i,$$
if $X$ is a compact nonsingular real algebraic variety.
\end{thm}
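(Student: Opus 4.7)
The plan is to follow the Bittner-style strategy, using the fact that $\K$ admits a presentation by compact nonsingular real algebraic varieties modulo blowup relations, and then to verify that the classical Poincar\'e polynomial respects these relations.

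First I would establish (or invoke from the literature) a Bittner-type presentation of $\K$: by weak factorization and resolution of singularities (both available for real algebraic varieties in the sense of BCR), the ring $\K$ is generated as an abelian group by classes $[X]$ of compact nonsingular real algebraic varieties, subject to the relation $[\emptyset]=0$ and the blowup relation
\[
[\tilde X_Y] - [E] \;=\; [X] - [Y],
\]
where $Y\subset X$ is a nonsingular closed subvariety of a compact nonsingular variety $X$, $\tilde X_Y\to X$ is its blowup, and $E$ is the exceptional divisor. This reduction is standard: any variety can be compactified and resolved, and differences of two such compactifications/resolutions are cobordant through a chain of blowups.

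Next I would define $\beta$ on generators by the classical mod $2$ Poincar\'e polynomial $\beta([X])=\sum_i \dim_{\Z/2\Z} H_i(X,\Z/2\Z)\, u^i$ and check the two relations. The key computation is the blowup formula in $\Z/2\Z$-homology: since $E\to Y$ is a $\PP^{c-1}$-bundle with $c=\codim_X Y$, Leray--Hirsch gives $\beta([E])=\beta([Y])\cdot (1+u+\cdots+u^{c-1})$, and the decomposition theorem for blowups (valid with $\Z/2\Z$ coefficients for compact nonsingular real varieties) yields $\beta([\tilde X_Y])=\beta([X])+\beta([Y])\cdot(u+\cdots+u^{c-1})$. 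Combining these two identities gives $\beta([\tilde X_Y])-\beta([E])=\beta([X])-\beta([Y])$, so the blowup relation is respected. Multiplicativity follows from the K\"unneth formula with $\Z/2\Z$ coefficients, which holds without correction terms since $\Z/2\Z$ is a field; this shows $\beta$ is a ring homomorphism.

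Surjectivity is easy: $\beta([\mathrm{pt}])=1$ and, using $[\R]=[\PP^1]-[\mathrm{pt}]$ with $\PP^1(\R)=S^1$, one gets $\beta([\R])=(1+u)-1=u$, so $\Z[u]$ is hit. Uniqueness of $\beta$ follows from the fact that the Bittner presentation forces its values on generators. The principal obstacle, as I see it, is the establishment of the Bittner presentation in the real algebraic category and the verification of the $\Z/2\Z$-homology blowup decomposition for real blowups: in the real setting one must rely on weak factorization for real algebraic varieties and on the fact that mod $2$ coefficients are well-behaved enough to mimic the complex blowup formula despite the absence of a Hodge structure or an algebraic cycle map. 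Once these two ingredients are in place, the remaining checks are formal.
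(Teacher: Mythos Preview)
The paper does not give its own proof of this theorem; it is quoted from McCrory and Parusi\'nski \cite{MCPvirt, MCPI}. Your approach is correct and is essentially the one used there: the Bittner-type presentation (recorded in the paper as Theorem~\ref{bit}, with the absolute real case due to \cite{MCPvirt}) reduces the construction to checking the mod~$2$ blowup formula and K\"unneth, and surjectivity is exactly the argument the paper records in Remark~\ref{rem-surj}.
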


\begin{rem}\label{rem-surj}
\begin{enumerate}
\item \label{beta-deg} An important property of the virtual Poincar\'e polynomial is that the degree of $\beta(X)$, for a real algebraic variety $X$, is equal to the dimension of $X$.
\item  Note that $\beta$ is surjective. To see this, remark that $\beta(\LL^n)=u^n$ because $\beta(\Sph^n)=1+u^n$ and $\beta$ is additive. Now, if $P(u)=\sum a_iu^i$ is a polynomial in $\Z[u]$, then $P(u)$ is the image under $\beta$ of the element $P(\LL)=\sum a_i \LL^i$ of $\K$.
\item Using Poincar\'e duality, note that for a compact nonsingular real algebraic variety $X$, the virtual Poincar\'e polynomial of $[X]$ evaluated at $1/u$ is equal to $u^{-\dim X} \beta([X])$.
\end{enumerate}
\end{rem}

\vskip 5mm

Let $S$ be a real algebraic variety. By a $S$-variety, we mean a real algebraic variety $X$ together with a regular morphism $X\to S$. The $S$-varieties form a category $\Var_S$, where the arrows are given by those morphisms which commute with the morphisms to $S$. We denote by $\KS$ the Grothendieck group of $S$-varieties. As a group, $\KS$ is the
free abelian group on isomorphism classes $[h:X\to S]$ of $S$-varieties $X$, modulo the relations 
$$[h:X\to S] = [h_{|X\setminus Y}:X\setminus Y\to S] + [h_{|Y}:Y\to S],$$
where $Y\subset X$ is a closed subvariety. The ring structure is induced by the fibred product over $S$ of $S$-varieties.
Let $\MS$ denote the localization $\KS[\LL^{-1}]$.

A morphism $f:S \to T$ induces a ring morphism $f^*: \M_T \to \MS$ by pulling back, making $\M_T$ as a $\MS$-module. We have also a morphism of $\M_T$-modules $f_!: \MS \to \M_T$ by composition. 

\begin{ex}
\begin{enumerate}
\item When $S$ is reduced to a point, the ring $\KS$ is isomorphic to $\K$. 
\item Considering the projection $f:S\to \{p\}$ onto a point, we obtain a structure of $\M$-module for $\MS$ via $f^*$, whereas the induced mapping $f_!:\MS \to \M$ which maps $[h:X\to S]$ onto $[X]$ is $\M$-linear.
\end{enumerate}
\end{ex}

\begin{rem}\label{rem-const2} Similarly to the absolute case in Remark \ref{rem-const}, we can consider the class of Zariski constructible sets in $\KS$.
\end{rem}

We know from \cite{MCPvirt} in the absolute real case, and Bittner \cite{B} in the relative case for algebraic varieties in characteristic zero, that the ring $\KS$ admits a simpler presentation involving only proper mappings from nonsingular varieties. We state it below since we use it later on.
Denote by $\KblS$ the free abelian group generated by isomorphism classes of proper mapping $h:X\to S$, with $X$ nonsingular, subject to the relations $[\emptyset \to S]=0$ and 
$$[h\circ \pi: Bl_C X\to S]-[(h\circ \pi)_{|E}: E\to S]=[h:X\to S]-[h_{|C}:C\to S]$$
where $C\subset X$ is a nonsingular closed subvariety, $\pi: Bl_C X\to X$ is the blowing-up of $X$ along $C$ with exceptional divisor $E$. The ring structure on $\KblS$ is given by the fibred product.

\begin{thm}\label{bit}\cite{B,MCPvirt} The natural mapping $\KblS\to \KS$ is an isomorphism.
\end{thm}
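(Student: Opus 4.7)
The plan is to follow Bittner's original strategy, adapted to the real setting as in McCrory--Parusi\'nski \cite{MCPvirt}. The natural map $\KblS \to \KS$ is immediately well-defined: if $\pi:Bl_C X \to X$ is the blowup of a nonsingular variety $X$ along a nonsingular closed center $C$, with exceptional divisor $E$, then $Bl_C X \setminus E$ is isomorphic to $X\setminus C$, so the scissor relation in $\KS$ yields the blowup relation defining $\KblS$. The substantive content is to produce a two-sided inverse.

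For surjectivity, I would argue by induction on $\dim X$ that every generator $[h:X\to S]$ of $\KS$ lies in the image. Given $h:X\to S$, first reduce to the proper case by the graph trick: pick a projective (hence, by the remark at the start of Section~\ref{sect-Gro}, affine real algebraic) compactification $\bar X \supset X$, and let $Y$ be the Zariski closure of the graph of $h$ in $\bar X \times S$. The second projection $\bar h : Y \to S$ is proper, $X$ sits in $Y$ as an open dense subvariety, and $Y\setminus X$ has strictly smaller dimension. By scissor,
\[
[h:X\to S] \;=\; [\bar h:Y\to S] \;-\; [\bar h|_{Y\setminus X}:Y\setminus X\to S].
\]
Next, apply Hironaka's resolution of singularities to $Y$: there is a proper birational morphism $\sigma:\widetilde Y \to Y$ with $\widetilde Y$ nonsingular, an isomorphism outside $\Sing(Y)$. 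Scissor with respect to $\Sing(Y)\subset Y$ and $\sigma^{-1}(\Sing(Y))\subset \widetilde Y$ expresses $[\bar h:Y\to S]$ as $[\bar h\circ\sigma:\widetilde Y\to S]$ plus a combination of classes supported on varieties of dimension $<\dim Y$, and the induction hypothesis absorbs the latter.

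For injectivity, I would construct an inverse $\Phi:\KS \to \KblS$ by the very reduction just described: decree $\Phi[h:X\to S]$ to be the class in $\KblS$ obtained from a choice of compactification and resolution. The main obstacle, and the heart of the proof, is showing that $\Phi$ is well-defined, i.e.\ that the answer is independent of the choices and respects the scissor relation. The key input here is the weak factorization theorem of Abramovich--Karu--Matsuki--W{\l}odarczyk, available in the real algebraic category via Galois-invariant factorization of the complexification: any birational map between two nonsingular proper $S$-varieties factors as a sequence of blowups and blowdowns along nonsingular centers, and each such step is exactly the blowup relation that was imposed in $\KblS$. Compatibility with the scissor relation $[h:X\to S]=[h|_{X\setminus Z}]+[h|_Z]$ is handled using embedded resolution of singularities, which allows one to refine any compactification so that $Z\subset X$ extends to a strict normal crossings configuration inside a nonsingular proper $S$-variety; on such configurations the blowup relation again reduces everything to a canonical form.

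The hard part, as always with Bittner-style theorems, is organising the bookkeeping in the previous paragraph: one must verify that weak factorization can be performed over the base $S$ (every blowup center must map to $S$ via the given structure morphism), and that embedded resolution can be used to reconcile two different decompositions of $X$ into open-closed pieces, all within the real algebraic category. Once this is set up, checking $\Phi\circ (\KblS\to\KS)=\id$ and $(\KblS\to\KS)\circ\Phi=\id$ is formal: on $\KblS$ generators already of the desired form, $\Phi$ returns the same class (by invariance), and on $\KS$ the reduction procedure inverts the natural map by construction.
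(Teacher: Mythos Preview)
The paper does not actually prove this theorem: it is stated with the citation \cite{B,MCPvirt} and no proof is given, the result being imported directly from Bittner (relative case over a field of characteristic zero) and McCrory--Parusi\'nski (absolute real case). Your sketch follows exactly the Bittner strategy used in those references --- graph-compactify, resolve, and invoke weak factorization to show the resulting class in $\KblS$ is independent of choices --- so there is nothing to compare beyond noting that you have outlined the standard argument the paper is citing rather than reproducing.

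One small caution on your sketch: the ``graph trick'' as you phrase it requires $S$ itself to admit a projective compactification so that the closure of the graph in $\bar X\times \bar S$ is proper over $\bar S$ (and hence, after restriction, proper over $S$); taking the closure in $\bar X\times S$ alone need not give a proper morphism to $S$ when $S$ is non-compact. Bittner handles this by working with quasi-projective varieties throughout and compactifying both source and base; in the real affine setting of this paper the analogous move is available, but it is worth being explicit about it.
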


Two real algebraic varieties $X$ and $Y$ are called stably birational if there exist integers $k$ and $l$ such that $X\times \PP^k$ is birational to $Y\times \PP^l$.
As a consequence of Theorem \ref{bit}, together with the Weak Factorisation Theorem \cite{W}, we produce like in \cite{LL} a realisation of $\K$ in the ring $\Z [SB]$, where the ring $\Z [SB]$ is the free abelian group generated by stably birational classes of compact nonsingular irreducible real algebraic varieties, the product of varieties giving the multiplication.

\begin{cor} There exist a ring morphism $\K\to \Z[SB]$ assigning to the class $[X]$ of a compact nonsingular irreducible real algebraic variety $X$ its stably birational class. Its kernel is generated by the class of the affine line.
\end{cor}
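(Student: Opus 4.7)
The plan is to follow the Larsen--Lunts strategy, using Theorem \ref{bit} in the absolute case (where $S$ is reduced to a point) together with the Weak Factorisation Theorem \cite{W}. The argument splits into two halves: first, construct the ring morphism $\phi:\K\to \Z[SB]$; second, identify its kernel by exhibiting an inverse to the induced map $\K/(\LL)\to \Z[SB]$.

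For the construction, Theorem \ref{bit} presents $\K$ as the abelian group generated by classes $[X]$ of compact nonsingular real algebraic varieties, modulo the blow-up relation $[Bl_C X] - [E] = [X] - [C]$. I would define $\phi$ on an irreducible compact nonsingular variety $X$ by sending $[X]$ to its stably birational class $\langle X\rangle$, and extend additively across irreducible components. The blow-up relation is preserved: $Bl_C X$ is birational to $X$, and the exceptional divisor $E=\PP(N_{C/X})$ is a Zariski-locally trivial $\PP^{r-1}$-bundle over $C$ (with $r=\codim_X C$), hence birational to $C\times \PP^{r-1}$, which has the same stably birational class as $C$. The ring structures on $\K$ and $\Z[SB]$ are both given by Cartesian product, so $\phi$ is multiplicative.

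The inclusion $(\LL)\subseteq \ker \phi$ is immediate: using the scissor decomposition $[\PP^1]=[\LL]+[pt]$, one gets $\phi(\LL)=\langle \PP^1\rangle -\langle pt\rangle =0$ since $\PP^1$ and a point are stably birational. For the reverse inclusion, I would build $\psi:\Z[SB]\to \K/(\LL)$ by $\psi(\langle X\rangle)=[X]\bmod (\LL)$. Well-definedness on stable birational classes reduces, via $[\PP^a]=1+\LL+\cdots+\LL^a\equiv 1\bmod (\LL)$, to the birational case: if $X$ and $Y$ are birational compact nonsingular irreducible real algebraic varieties, then $[X]\equiv [Y]\bmod (\LL)$. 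Here the Weak Factorisation Theorem enters, providing a chain of blow-ups and blow-downs with nonsingular centers connecting $X$ and $Y$; by the projective bundle formula $[E]=[C][\PP^{r-1}]\equiv [C]\bmod (\LL)$, each step of the chain preserves the class modulo $(\LL)$. Then $\phi\circ \psi$ is the identity on generators of $\Z[SB]$, while $\psi\circ \phi$ is the identity on the generating classes $[X]$ of $\K/(\LL)$, so the induced map $\K/(\LL)\to \Z[SB]$ is an isomorphism, whence $\ker \phi=(\LL)$.

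The main obstacle is the technical verification of the projective bundle formula and the Weak Factorisation Theorem in the real algebraic category of \cite{BCR}, since both are usually phrased in the scheme-theoretic setting. Zariski-local triviality of the normal bundle along a nonsingular closed subvariety should follow from the standard fact that locally free sheaves on nonsingular varieties admit trivialisations on a Zariski cover, after which additivity in $\K$ yields the projective bundle formula; the Weak Factorisation Theorem, valid in characteristic zero, transfers to real algebraic varieties by working at the level of their underlying real points.
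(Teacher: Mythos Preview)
Your proposal is correct and follows precisely the approach the paper indicates: the corollary is not given a detailed proof in the paper, which simply says it follows ``like in \cite{LL}'' from Theorem~\ref{bit} together with the Weak Factorisation Theorem, and your argument is exactly the Larsen--Lunts construction carried out in the real setting via the blow-up presentation. Your closing caveats about transferring the projective bundle formula and weak factorisation to the real algebraic category of \cite{BCR} are legitimate technical points that the paper itself does not spell out either; the projective bundle identity you need is in fact invoked elsewhere in the paper (see the formula from \cite{B}, Lemma~3.5, quoted in the proof of the duality theorem), and weak factorisation applies over any field of characteristic zero, hence to the $\R$-schemes underlying real algebraic varieties.
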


This realisation is important to produce some zero divisor in the Grothendieck rings.

\begin{ex}\label{ex-bir} Let $C_1$ be a circle in $\R^2$ and $C_2$ be given by $x^4+y^4=1$ in $\R^2$. Then $[C_1]-[C_2]$ is non-zero in $\K$ (whereas the virtual Poincar\'e polynomial of $C_1$ and $C_2$ coincide). Otherwise $C_1$ and $C_2$ would be stably birational, and therefore birational (stably birational curves are always birational, cf. \cite{D}). Here is a contradiction since these curves do not have the same genus (zero versus three) which is a birational invariant.
\end{ex}

\begin{rem} Following \cite{Poo}, we see also that $\K$ is not as domain, since for smooth compact irreducible algebraic variety over $\R$, the birationality of their set of real points implies their birationality.
\end{rem}

We give finally description of $\KS$ using continuous rational functions.
A continuous rational function on a nonsingular real algebraic variety is a continuous real value function which coincides with a rational function of a Zariski dense open subset of the source variety. This class of functions has been firstly studied by Kucharz \cite{Ku} and Koll\'ar and Nowak \cite{Ko}, and a systematic studied of the smooth affine case can be found in \cite{FHMM}. When the variety is singular, the notion of continuous rational functions is quite tricky, but a rather tame class of functions is given by the so called hereditarily rational functions in \cite{Ko}, or regulous functions in \cite{FHMM}. For simplicity, consider $X\subset \R^n$ being an algebraic subset of some $\R^n$. Then a regulous function on $X$ is the restriction to $X$ of a continuous rational function on $\R^n$.
The crucial fact about regulous functions is that they remains rational in restriction to any algebraic subset of $X$, cf. \cite{Ko}, Theorem 10.
A regulous mapping is a mapping whose components are regulous functions. A regulous isomorphism is a bijective regulous mapping whose inverse is a regulous mapping.

The zero sets defined by regulous functions are the closed Zariski constructible sets (\cite{FHMM}, Theorem 6.4).
For a Zariski constructible set $S$, denote by $\KregS$ the Grothendieck ring generated by regulous isomorphism classes of regulous mapping $h:C\to S$, where $C$ is a Zariski constructible set, subject to the usual additivity and multiplicativity relations.

We noticed already in Remark \ref{rem-const} that the Grothendieck ring of algebraic varieties contains the classes of Zariski constructible sets. We show below that $\KS$ is actually isomorphic to $\KregS$ when $S$ is an algebraic variety. 

\begin{prop}\label{regu} Let $S$ be a real algebraic variety. Then $\KS$ is isomorphic to $\KregS$.
\end{prop}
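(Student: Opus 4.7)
The plan is to construct mutually inverse ring homomorphisms between $\KS$ and $\KregS$. In one direction, the inclusion sending a regular morphism $h: X\to S$ between real algebraic varieties to the same map, viewed as a regulous morphism between Zariski constructible sets, induces a ring homomorphism $\phi: \KS \to \KregS$. I would first check that $\phi$ is well-defined: the scissor relation in $\KS$ becomes a special case of the scissor relation in $\KregS$ (a closed subvariety is a closed constructible subset), regular isomorphisms are regulous isomorphisms, and fibred products of varieties over $S$ agree with the fibred products of constructible sets carrying regulous structure maps.

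The core of the argument is to construct an inverse $\psi: \KregS \to \KS$. The key technical lemma needed is the following stratification statement: for every regulous map $h: C\to S$ from a Zariski constructible set $C$, there exists a finite partition $C = \bigsqcup_\alpha C_\alpha$ into locally closed Zariski constructible subsets such that each restriction $h|_{C_\alpha}$ is a regular morphism. To prove it I would write $C = \bigsqcup_i (X_i\setminus Y_i)$ as a finite disjoint union of locally closed pieces, extend $h$ to a regulous function on each closure $X_i$, and invoke the hereditary rationality theorem of Koll\'ar (\cite{Ko}, Theorem 10) to conclude that this extension is rational on $X_i$ and therefore regular on a Zariski dense open $U_i\subset X_i$. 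The complement $X_i\setminus U_i$ has strictly smaller dimension, and Noetherian induction applied to these complements (intersected with the original constructible pieces) yields the desired stratification.

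Once this lemma is in hand, I would set $\psi([h: C\to S]) := \sum_\alpha [h|_{C_\alpha}: C_\alpha \to S] \in \KS$, using that each locally closed constructible subset $C_\alpha$ is itself an affine real algebraic variety in the sense of \cite{BCR} (embedded via the graph of the inverse of a defining equation of the closed complement) and that $h|_{C_\alpha}$ is regular. Independence of the chosen stratification follows by passing to a common refinement of two admissible stratifications and applying the scissor relation inside $\KS$. The scissor relation in $\KregS$ is respected by choosing stratifications adapted to the given closed constructible piece, and multiplicativity is immediate from the compatibility of fibred products.

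The identities $\phi\circ\psi = \id$ and $\psi\circ\phi = \id$ then follow directly: the first from the scissor relation in $\KregS$ applied to the stratification defining $\psi$, and the second because for a regular morphism $h: X\to S$ the trivial one-piece stratification is already admissible. The main obstacle is the stratification lemma, which I expect to reduce cleanly to the hereditary rationality property of regulous functions; the delicate point is handling the passage from a constructible domain to its Zariski closure when applying that property, which is precisely why the proof proceeds piecewise on the decomposition into locally closed subsets.
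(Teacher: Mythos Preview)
Your approach is essentially the same as the paper's: both reduce a regulous map $h:C\to S$ to a finite sum of classes of regular maps by stratifying the domain via the hereditary rationality theorem of Koll\'ar--Nowak and Noetherian induction on dimension. The paper phrases this as showing that every class in $\KregS$ is a combination of classes of regular maps, while you package the same computation as an explicit inverse $\psi$; the content is identical.

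There is one omission in your well-definedness check for $\psi$. You verify independence of the stratification, compatibility with the scissor relation, and multiplicativity, but you do not address the \emph{regulous isomorphism} relation that generates $\KregS$: if $h_1:C_1\to S$ and $h_2:C_2\to S$ are related by a regulous isomorphism $\phi:C_1\to C_2$ commuting with $S$, you must show $\psi([h_1])=\psi([h_2])$. The paper handles this explicitly by applying the same stratification idea to $\phi$ and $\phi^{-1}$ simultaneously, producing partitions $C_1=\sqcup C_{1,i}$ and $C_2=\sqcup C_{2,i}$ on which $\phi$ restricts to a regular isomorphism $C_{1,i}\to C_{2,i}$; then the pieces $[h_1|_{C_{1,i}}]$ and $[h_2|_{C_{2,i}}]$ agree in $\KS$. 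This step is not difficult, but it is a genuine relation in $\KregS$ that your list of checks skips, and without it $\psi$ is not yet known to descend from the free group on regulous maps to $\KregS$.
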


\begin{proof}
The class of a constructible set is a combination of classes of algebraic sets, cf. Remark \ref{rem-const}. The same is true for the class of a regulous mapping $h:C\to S$ from a constructible set $C$ to $S$, as we can prove as follows, using an induction on the dimension of $C$. Indeed, $h$ is regular except on its indeterminacy locus $D$ which is a Zariski constructible subset of $C$ of dimension strictly smaller than $\dim C$. Moreover $h$ is still a rational function in restriction to $D$ by \cite{Ko}, so that $h_{|D}:D\to S$ is a regulous mapping. Using the induction hypothesis, the restriction $h_{|D}$ can be expressed as a combination of classes of regular mappings, therefore the same is true for $[h:C\to S]$ since
$$[h:C\to S]=[h_{|C\setminus D}:C\setminus D \to S]+[h_{|D}:D \to S].$$
Finally, if $h_1:C_1\to S$ and $h_2:C_2\to S$ are regulous isomorphic regular mappings from locally closed algebraic sets, meaning there exist a regulous isomorphism $\phi:C_1\to C_2$ commuting with $S$, then we can stratify similarly $C_1$ and $C_2$ into locally closed sets $C_1=\sqcup C_{1,i}$ and $C_2=\sqcup C_{2,i}$ such that for all $i$, the rational mapping $\phi$ is regular in restriction to $C_{1,i}$, the image $\phi(C_{1,i})$ is equal to $C_{2,i}$ and $\phi^{-1}$ is regular in restriction to $C_{2,i}$. As a consequence the image in $\KS$ of $h_1:C_1\to S$ and $h_2:C_2\to S$ coincide as expected.
\end{proof}

\subsection{Arc-symmetric sets}
Arc-symmetric sets have been introduced by Kurdyka in \cite{K} as subsets of $\R^n$, for some $n\in \mathbb N$, stable along analytic arcs. We consider in the present paper arc-symmetric sets as subsets of a projective space, as defined by Parusi\'nski in \cite{P}, together with the constructible category $\AS$ of boolean combinations of arc-symmetric sets. 

\begin{defi} A semialgebraic subset $A\subset \PP^n$ is an $\AS$-set if for every real analytic arc $\gamma: (−1,1)\to \PP^n$ such that $\gamma((−1,0))\subset A$, there exists $\epsilon>0$ such that $\gamma((0,\epsilon))\subset A$. An $\AS$-mapping is a mapping whose graph is an $\AS$-set.
\end{defi}

\begin{rem} \begin{enumerate}
\item An $\AS$-set closed in the Euclidean topology is called an arc-symmetric set, and the family of $\AS$-sets corresponds to the boolean algebra generated by arc-symmetric sets.
\item The dimension of an $\AS$-set is its dimension as a semialgebraic set. There exist also a notion of arc-symmetric closure of a semialgebraic set (conserving the dimension). We refer to \cite{KP} for a nice introduction to $\AS$-sets.
\end{enumerate} 
\end{rem}

We denote by $\KAS$ the Grothendieck ring of $\AS$-sets (which has already been considered in \cite{C}). It is generated as a group by the classes of $\AS$-sets under $\AS$-homeomorphism, subject to the scissor relation
$[A]=[B]+[A\setminus B]$ where $B$ is an $\AS$-subset of $A$, the ring structure being induced by the product of $\AS$-sets.

We define also the relative Grothendieck ring $\KASS$ of $\AS$-sets over an $\AS$-set $S$ by considering the classes of $\AS$-mapping $h:A\to S$, where $A$ is an $\AS$-set. It comes with natural morphisms
$$\KS \to \KASS \to \KSAS.$$
Finally, we denote by $\MASS$ the localisation of $\KASS$ in the class of the affine line of $\KAS$.

\begin{rem} Note that we obtain the same Grothendieck ring $\KASS$ by replacing the isomorphism condition of being an $\AS$-homeomorphism by the weaker condition of being an $\AS$-bijection (cf. \cite{MCP} Theorem 4.6, or more explicitly \cite{C} Remark 4.15). Note also that it is not necessary to assume that $B$ is closed in $A$ in the scissor relation.
\end{rem}

Let us recall that the virtual Poincar\'e polynomial can be extended from real algebraic varieties to $\AS$-sets. By a nonsingular $\AS$-set, we mean an $\AS$-set which is included in the nonsingular locus of its Zariski closure.

\begin{thm}(\cite{F}) There exists a ring morphism $\bAS:\KAS \to \Z[u]$ characterised by the fact that $\bAS([X])$ is equal to the (classical) Poincar\'e polynomial of $X$ for $X$ a compact nonsingular arc-symmetric set.
\end{thm}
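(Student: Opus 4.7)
The plan is to model the construction on the virtual Poincar\'e polynomial $\beta:\K\to\Z[u]$ of Theorem \ref{thm-virt}, replacing real algebraic varieties by $\AS$-sets. The classical Poincar\'e polynomial $P(X)=\sum_i \dim H_i(X,\Z/2\Z)u^i$ is well defined on compact nonsingular $\AS$-sets, so the task is to show that this assignment extends to an additive and multiplicative invariant on $\KAS$.

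First I would establish that $\KAS$ is generated, as an abelian group, by classes of compact nonsingular $\AS$-sets. Given any $\AS$-set $A$, stratifying it into locally closed nonsingular $\AS$-strata and applying Hironaka's resolution of singularities to the Zariski closure $\overline{A}$ (a real algebraic variety) produces a proper $\AS$-morphism $\widetilde{A}\to\overline{A}$ from a compact nonsingular $\AS$-set, which is an isomorphism over the smooth locus of $\overline A$. Iterating the scissor relations on the exceptional locus expresses $[A]$ as a $\Z$-linear combination of classes of compact nonsingular $\AS$-sets.

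Next I would prove a Bittner-type presentation of $\KAS$ analogous to Theorem \ref{bit}: the ring is presented by classes of compact nonsingular $\AS$-sets with the sole additional relation
$$[\mathrm{Bl}_C X]-[E]=[X]-[C]$$
for every blow-up of a compact nonsingular $\AS$-set $X$ along a nonsingular closed $\AS$-subset $C$ with exceptional divisor $E$. With such a presentation in hand, setting $\bAS([X])=P(X)$ on compact nonsingular $X$ extends uniquely to a group morphism $\KAS\to\Z[u]$, provided $P$ satisfies the above blow-up relation, which it does by the standard Leray-Hirsch computation on the projective bundle $E\to C$. Multiplicativity then follows from the K\"unneth formula for $\Z/2\Z$-coefficients applied to products of compact nonsingular $\AS$-sets.

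The main obstacle is the Bittner-type presentation: one must show that any $\AS$-bijection between compact nonsingular $\AS$-sets, and any two compact nonsingular $\AS$-compactifications of a given nonsingular $\AS$-set, are connected by a chain of blow-ups along nonsingular $\AS$-centers. While the weak factorisation theorem of \cite{W} yields this algebraically for Zariski closures, extra care is required because an $\AS$-set is only a subset of its Zariski closure; one must arrange the sequence of blow-ups to have centers contained in the nonsingular loci of the $\AS$-sets under consideration and to remain compatible with the $\AS$-stratification throughout. Here the existence of arc-symmetric closures of semialgebraic sets, together with the compatibility of algebraic blow-ups with passage to real points, is the decisive ingredient.
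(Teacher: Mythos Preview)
The theorem is cited from \cite{F} and is not proved in the present paper, so there is no in-paper proof to compare against directly. That said, your strategy runs \emph{backwards} relative to the paper's logic: the Bittner-type presentation of $\KAS$ that you want to use as input is precisely Corollary~\ref{cor-AS}, and the paper derives it as a \emph{consequence} of the existence of $\bAS$ (via Theorem~\ref{thm-AS}). Within this paper your argument would therefore be circular.

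As a standalone proof, the gap is the one you yourself flag but do not close: weak factorisation for compact nonsingular $\AS$-sets. The theorem of \cite{W} concerns proper birational maps of smooth varieties over a field; it says nothing about $\AS$-sets or Nash manifolds, and an $\AS$-set need not be Zariski-dense in its Zariski closure, so ``arranging the centres to lie in the $\AS$-sets and remain compatible with the stratification'' is not a proof but a restatement of the problem. The missing ingredient is the Nash--Tognoli theorem: a compact nonsingular $\AS$-set is a compact Nash manifold, hence Nash diffeomorphic to a compact nonsingular real algebraic variety. This is what allows one to transport algebraic weak factorisation to the $\AS$/Nash setting (cf.\ the ``strong factorisation theorem for compact connected Nash manifolds'' invoked in the proof of Theorem~\ref{LL}, and Theorem~4.5 of \cite{MCPI}). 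The construction in \cite{F} proceeds along these lines: one first shows that the algebraic $\beta$ of Theorem~\ref{thm-virt} is a Nash-diffeomorphism invariant of compact nonsingular real algebraic varieties, hence gives a well-defined invariant of compact Nash manifolds, and then extends to all $\AS$-sets by resolution of singularities for arc-symmetric sets (as in Lemma~\ref{lem-class}). In short, one reduces to the already-known algebraic $\beta$ rather than building a Bittner presentation of $\KAS$ from scratch.
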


A real algebraic set is in particular an arc-symmetric set, and we have a ring morphism $\alpha: \K \to \KAS$, satisfying $\bAS \circ \alpha =\beta$.

In the algebraic context, the Grothendick ring of varieties is still a mysterious object, despite of its realisation morphisms. More generally, very few Grothendieck rings appearing in geometric context are well-understood, the semialgebraic case being a rather simple (but non trivial) exception. The following result makes a progress in this direction, providing a full description of $\KAS$ via the virtual Poincar\'e polynomial.

\begin{thm}\label{thm-AS} The Grothendieck ring $\KAS$ of $\AS$-sets is isomorphic to $\Z[u]$ via the ring morphism $\bAS:\KAS \to \Z[u]$ induced by the virtual Poincar\'e polynomial.
\end{thm}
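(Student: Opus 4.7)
The plan is to show $\bAS$ is both surjective and injective. Surjectivity is immediate, exactly as in Remark \ref{rem-surj}(2) applied to $\bAS$: since $\bAS(\LL) = u$ and $\bAS$ is a ring morphism, every polynomial $P(u) \in \Z[u]$ equals $\bAS(P(\LL))$.

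For injectivity, the strategy reduces the problem to the following structural claim: \emph{every class $[A] \in \KAS$ lies in the subring $\Z[\LL]$ generated by $\LL$}. Granting this, let $\iota : \Z[u] \to \KAS$ be the ring morphism sending $u$ to $\LL$; the composition $\bAS \circ \iota : \Z[u] \to \Z[u]$ sends $u \mapsto \LL \mapsto u$, so it is the identity, which forces $\iota$ to be injective. The claim says $\iota$ is also surjective, whence $\iota$ is an isomorphism, and so is its inverse $\bAS$.

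I would establish the structural claim by induction on $d = \dim A$. The base case $d = 0$ is trivial, since then $A$ is a finite set of points and $[A] = |A| \cdot \LL^0$. For the inductive step, I would partition $A$ into finitely many locally closed $\AS$-subsets, each $\AS$-homeomorphic to some $\R^k$ with $k \leq d$; the scissor relation in $\KAS$, together with $[\R^k] = \LL^k$, then yields $[A] = \sum_j \LL^{k_j} \in \Z[\LL]$. An $\AS$-stratification of $A$ by smooth $\AS$-strata (the non-smooth locus having smaller dimension and so being handled by the inductive hypothesis) reduces this further to the case of a smooth $\AS$-manifold of pure dimension $d$.

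The main obstacle is producing this $\AS$-compatible cell decomposition: each homeomorphism of an open cell with $\R^k$ must have an $\AS$-graph, not merely a semialgebraic one, so the Łojasiewicz triangulation theorem does not apply directly. I would approach this by combining Kurdyka's and Parusi\'nski's structural results on $\AS$-sets---density of the smooth $\AS$-locus, stability under connected components, and existence of $\AS$-stratifications by smooth strata---with a Nash (or arc-analytic) triviality argument guaranteeing that each sufficiently small smooth $\AS$-stratum is $\AS$-homeomorphic to a Euclidean cell.
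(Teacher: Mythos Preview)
Your reduction to the claim ``every $[A]$ lies in $\Z[\LL]$'' is equivalent to the theorem, but your proposed proof of that claim has a real gap at exactly the step you identify as the obstacle. Stratifying into smooth $\AS$-strata reduces you to the case of a connected Nash manifold $M$, and it is true that $M$ is locally Nash-diffeomorphic to $\R^d$; but this gives an open \emph{cover} by cells, not a \emph{partition}, and the scissor relation needs a partition. Passing from a cover to a partition forces you to handle differences and intersections of Nash charts, which are not cells. Neither Nash Morse theory nor semialgebraic triangulation produces a partition with $\AS$ characteristic maps: handle attaching maps are a priori only smooth, and \L ojasiewicz simplices are only semialgebraically parametrised. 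Showing $[M]\in\Z[\LL]$ directly for an arbitrary compact Nash manifold (think of a genus-two real algebraic surface) is essentially the whole theorem, and the ingredients you list do not supply a mechanism for it.

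The paper bypasses cell decompositions entirely. Lemma~\ref{lem-class} uses resolution of singularities and Nash--Tognoli to express any $[A]$ as a $\Z$-combination of classes $\alpha([X_i])$ with each $X_i$ compact nonsingular real algebraic. The decisive input is then Theorem~4.5 of \cite{MCPI}: since Nash-diffeomorphic compact nonsingular varieties have the same class in $\KAS$, the morphism $\alpha:\K\to\KAS$ factors as $\alpha=h\circ\beta$ for some $h:\Z[u]\to\KAS$, and one checks directly that $h$ is a two-sided inverse to $\bAS$. The hard content---weak factorisation and the weight filtration---is packed into that cited result; it is what replaces the $\AS$-cell decomposition you are trying to build by hand.
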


In view of the proof of Theorem \ref{thm-AS}, we begin with expressing the class of an $\AS$-set in term of classes of compact nonsingular real algebraic varieties. 
For an $\AS$-set $A$, we denote by $\Reg A$ its regular part, namely $\Reg A:= A\cap \Reg(\overline {A}^Z)$.

\begin{lem}\label{lem-class} The class of any arc-symmetric $A\in \AS$ in $\KAS$ is equal to a finite linear combination $\sum n_i \alpha([A_i])$, with $n_i\in \Z$, of classes of compact nonsingular real algebraic varieties $A_i$.
\end{lem}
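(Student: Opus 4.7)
The plan is to proceed by induction on $d := \dim A$, the base case $d=0$ being immediate since a finite set has class a nonnegative integer multiple of $\alpha([\mathrm{pt}])$. Assume the result for $\AS$-sets of dimension strictly less than $d$. First I would reduce to the arc-symmetric (Euclidean-closed) case: the Euclidean closure $\overline{A}$ is arc-symmetric, and by a standard property of $\AS$-sets the frontier $\overline{A}\setminus A$ has dimension strictly less than $d$. The scissor relation $[A]=[\overline{A}]-[\overline{A}\setminus A]$ and the induction hypothesis applied to the frontier reduce the problem to $A$ arc-symmetric.

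Next I would pass to a nonsingular ambient. Let $X:=\overline{A}^{Z}$ be the Zariski closure and pick a resolution of singularities $\pi:\tilde X \to X$ with $\tilde X$ compact nonsingular and $\pi$ restricting to an isomorphism over $\Reg X$. Set $\tilde A:=\pi^{-1}(A)$, an arc-symmetric subset of $\tilde X$. Scissoring over the partition $X=\Reg X\sqcup\Sing X$ yields
\[
[A]-[\tilde A] = [A\cap\Sing X]-[\tilde A\cap\pi^{-1}(\Sing X)],
\]
whose two right-hand terms are $\AS$-sets of dimension at most $\dim\Sing X<d$, and thus combinations of $\alpha([A_i])$ by the induction hypothesis. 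It then suffices to decompose $[\tilde A]$, and by one more scissor on the Zariski-irreducible components of $\tilde X$ (the intersections contributing lower-dimensional $\AS$-sets absorbed by induction) we may assume $\tilde X$ irreducible.

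The last step exploits the structural theory of arc-symmetric sets in a nonsingular ambient: by a blowing-up theorem of Kurdyka, after a finite composition of blow-ups $\sigma:M\to \tilde X$ along nonsingular centers of dimension $<d$, the preimage $\sigma^{-1}(\tilde A)$ agrees, up to an $\AS$-set of dimension $<d$, with a union of Euclidean connected components of $M$. A further round of scissor and induction (for the exceptional divisors, the centers, and the lower-dimensional discrepancy, all of dimension $<d$) reduces us to the class of such a union of components. But a union of Euclidean connected components of the compact nonsingular real algebraic variety $M$ is a compact nonsingular Nash submanifold, hence by the Nash--Tognoli theorem Nash-diffeomorphic to a compact nonsingular real algebraic variety $Y$; since Nash diffeomorphisms are $\AS$-homeomorphisms, this class equals $\alpha([Y])$ in $\KAS$, closing the induction.

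The main obstacle I expect is the structural blow-up result for arc-symmetric sets invoked in the last step---the reduction of an arbitrary arc-symmetric subset of a nonsingular variety to a union of Euclidean connected components modulo smaller dimension. This is the core geometric content of Kurdyka's theory of $\AS$-sets and what distinguishes the present setting from the general semialgebraic one; the rest of the argument is a standard combination of resolution of singularities, scissor relations, and Nash--Tognoli.
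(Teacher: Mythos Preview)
Your proposal is correct and follows essentially the same route as the paper: induction on dimension, Kurdyka's resolution theorem for arc-symmetric sets to reduce to a union of connected components of a compact nonsingular variety, and Nash--Tognoli (plus Nash approximation) to realise such a component as $\alpha([Y])$. The only differences are cosmetic: your preliminary reduction to the closed case is unnecessary since $A$ is already assumed arc-symmetric, and your separate resolution step before invoking Kurdyka is redundant because Kurdyka's theorem already produces a resolution of $\overline{A}^{Z}$; also the bound $\dim(\tilde A\cap\pi^{-1}(\Sing X))\le\dim\Sing X$ should read $\le d-1$, though the needed inequality $<d$ still holds.
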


\begin{proof} We prove the result by induction on the dimension of $A$. The result is trivial in dimension zero, so let assume $A$ has positive dimension. Note that it is sufficient to prove the result for an irreducible arc-symmetric set, since the intersection of two different irreducible arc-symmetric sets of the same dimension gives an arc-symmetric set of dimension strictly smaller.

Compactifying the Zariski closure of $A$ if necessary, we may assume that $\overline {A}^Z$ is an compact irreducible real algebraic set. Let ${\overline A}^{\AS}$ denote the arc-symmetric closure of $A$ in $\overline {A}^Z$. Then ${\overline A}^{\AS}$ is compact and ${\overline A}^{\AS}\setminus A$ is an arc-symmetric set of dimension strictly smaller than the dimension of ${\overline A}^{\AS}$, therefore by the induction assumption it suffices to prove the result for ${\overline A}^{\AS}$. Applying the resolution of singularities for arc-symmetric sets (cf. Theorem 2.6 in \cite{K}), there exist a compact nonsingular real algebraic variety $X$, a composition $\pi:X\to \overline {A}^Z$ of blowings-up along nonsingular algebraic centres, and a connected component $C$ of $X$ such that $\pi(C)$ is equal to the Euclidean closure of the regular part $\Reg A$ of $A$. Denote by $D\subset \overline {A}^Z$ and $E\subset X$ the smallest algebraic subsets such that $\pi$ is an isomorphism from $X\setminus E$ onto $ \overline {A}^Z \setminus D$, where the dimension of $D$ and $E$ are dimension strictly smaller than the dimension of $A$. Then we have more precisely 
$$\pi(C\setminus (C\cap E))=A\setminus (A\cap D),$$
hence the equality
$$[A]=[C]-[C\cap E]+[A\cap D]$$
in $\KAS$, since $\pi$ is an isomorphism on restriction to $C \setminus (C\cap E)$. Note that $C$ is a compact connected Nash manifold as a connected component of a compact nonsingular real algebraic set. By Nash-Tognoli Theorem (cf. \cite{BCR} Theorem 14.1.10), $C$ is diffeomorphic to a compact nonsingular real algebraic set $C'$. Approximating the diffeomorphism by a Nash diffeomorphism (meaning a real analytic diffeomorphism with semialgebraic graph, cf. \cite{Shiota}), we see that $C$ is Nash diffeomorphic to the real algebraic set $C'$, so that $[C]=\alpha ([C'])$ in $\KAS$.

Finally, note that the dimension of  $C\cap E$ and $A\cap D$ are strictly smaller than the dimension of $A$, so their class can also be expressed as a finite linear combination of classes of compact nonsingular real algebraic set using the induction hypothesis.
\end{proof}

The proof of Theorem \ref{thm-AS} is, in an essential manner, a consequence of Theorem 4.5 in \cite{MCPI}, combined with Lemma \ref{lem-class}. 

A Nash manifold is a semialgebraic set endowed with the structure of a smooth real analytic variety. A Nash morphism is a real analytic morphism with semialgebraic graph. A Nash set is the zero locus of a Nash function. We refer to \cite{Shiota} for more on Nash manifolds. 

\begin{proof}[Proof of Theorem \ref{thm-AS}]
The ring morphism $\alpha:\K\to \KAS$ satisfies $\alpha([X])=\alpha([Y])$ whenever $X$ and $Y$ are compact nonsingular real algebraic sets which are Nash diffeomorphic. Applying Theorem 4.5 in \cite{MCPI}, there exist $h:\Z[u] \to \KAS$ such that $\alpha=h \circ \beta$. Let prove that $h$ is an inverse for $\bAS$.

By Lemma \ref{lem-class}, any class $[A]$ in $\KAS$ may be expressed as a finite linear combination $\sum n_i \alpha([A_i])$ of classes of compact nonsingular real algebraic varieties $A_i$, with $n_i \in \Z$. Therefore 
$$h \circ \bAS (A)=h \circ \bAS \circ \alpha (\sum n_i [A_i])= h \circ \beta (\sum n_i [A_i])$$
since $\bAS \circ \alpha=\beta$. Moreover $\alpha=h \circ \beta$, therefore
$$h \circ \bAS (A)=\alpha(\sum n_i [A_i])=[A].$$
Now, if $P(u)$ is a polynomial in $\Z[u]$, then $P(u)=\beta(P(\LL))$ by Remark \ref{rem-surj}. Then
$$\bAS \circ h (P(u))=\bAS \circ h \circ \beta (P(\LL))=\bAS \circ \alpha (P(\LL))$$
since $h\circ \beta=\alpha$. Moreover $\bAS \circ \alpha=\beta$ so that 
$$\bAS \circ h (P(u))=\beta(P(\LL))=P(u)$$
as required.
\end{proof}

The surjectivity of $\beta:\K \to \Z[u]$ induces the following corollary.

\begin{cor} The morphism $\alpha:\K\to \KAS$ is surjective.
\end{cor}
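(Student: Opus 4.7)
The plan is to deduce the corollary from Theorem \ref{thm-AS} together with the surjectivity of $\beta:\K\to\Z[u]$ recorded in Remark \ref{rem-surj}(2), using the compatibility $\bAS\circ\alpha=\beta$ noted just before the theorem. The key observation is that once $\bAS$ has been upgraded from a ring epimorphism to an isomorphism, the triangle
\[
\xymatrix{\K \ar[r]^{\alpha} \ar[dr]_{\beta} & \KAS \ar[d]^{\bAS}_{\cong} \\ & \Z[u]}
\]
turns the surjectivity of $\alpha$ into a purely formal consequence of the surjectivity of $\beta$.

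Concretely, I would argue as follows. Fix any class $[A]\in\KAS$. By Theorem \ref{thm-AS}, the element $\bAS([A])$ is a well-defined polynomial in $\Z[u]$. By Remark \ref{rem-surj}(2), there exists $x\in\K$, namely $x=P(\LL)$ where $P(u)=\bAS([A])$, such that $\beta(x)=\bAS([A])$. Using $\bAS\circ\alpha=\beta$, this rewrites as $\bAS(\alpha(x))=\bAS([A])$, and since $\bAS$ is injective (again by Theorem \ref{thm-AS}), we conclude $\alpha(x)=[A]$. Thus $[A]$ lies in the image of $\alpha$.

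There is essentially no obstacle here: all the substantive work has already been carried out in the proof of Theorem \ref{thm-AS} (and in Lemma \ref{lem-class}, which in fact yields the same conclusion directly by writing $[A]=\sum n_i\alpha([A_i])=\alpha\bigl(\sum n_i[A_i]\bigr)$). The corollary is a one-line formal consequence, and the only care needed is to invoke the already-established isomorphism $\bAS$ rather than just its surjectivity.
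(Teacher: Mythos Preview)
Your argument is correct and matches the paper's approach exactly: the paper simply states that the surjectivity of $\beta:\K\to\Z[u]$ induces the corollary, which is precisely the triangle argument you spell out using $\bAS\circ\alpha=\beta$ and the fact that $\bAS$ is an isomorphism. Your additional remark that Lemma~\ref{lem-class} already gives the result directly is also valid and a nice observation.
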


\begin{rem} Note however that $\alpha$ is not injective, by Example \ref{ex-bir}.
\end{rem}

We can deduce from Theorem \ref{thm-AS} another presentation of $\KAS$, in the spirit of Theorem \ref{bit}. 
Denote by $\KN$ the free abelian group generated by Nash isomorphism classes of compact Nash manifolds, subject to the relations $[\emptyset]=0$ and  $[Bl_C X]-[E]=[X]-[C]$, where $X$ is a compact Nash manifold, $C\subset X$ is a Nash submanifold, $Bl_C X$ is the blowing-up of $X$ along $C$ and $E$ is the exceptional divisor. The ring structure is given by taking fibred products.

\begin{cor}\label{cor-AS} The natural morphism $\KN\to \KAS$ is an isomorphism.
\end{cor}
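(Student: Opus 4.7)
The plan is to follow the template of the proof of Theorem \ref{thm-AS}, with $\KAS$ replaced by $\KN$. Write $\phi : \KN \to \KAS$ for the natural morphism. It is well-defined because Nash isomorphisms yield $\AS$-homeomorphisms, and because the defining relation $[Bl_C X] - [E] = [X] - [C]$ of $\KN$ is a consequence of the scissor relation in $\KAS$ (the blowing-up restricts to a Nash, hence $\AS$, isomorphism off $E$).

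Surjectivity of $\phi$ follows directly from Lemma \ref{lem-class}: every class of $\KAS$ is a finite linear combination of classes of compact nonsingular real algebraic varieties, and any such variety is in particular a compact Nash manifold.

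For injectivity I construct a two-sided inverse $\tilde{h} : \Z[u] \to \KN$ of $\bAS \circ \phi$. The Bittner-style presentation $\K \cong \KblS$ from Theorem \ref{bit} (with $S$ a point) provides a ring morphism $\alpha' : \K \to \KN$ sending a compact nonsingular real algebraic variety to its class in $\KN$; this is well-defined because an algebraic blowing-up along a nonsingular centre is at the same time a Nash blowing-up of the underlying compact Nash manifold. By definition of $\KN$, the morphism $\alpha'$ identifies Nash-diffeomorphic compact nonsingular real algebraic varieties, so Theorem 4.5 of \cite{MCPI} produces a ring morphism $\tilde{h} : \Z[u] \to \KN$ with $\alpha' = \tilde{h} \circ \beta$, exactly as in the proof of Theorem \ref{thm-AS}. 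Since $\phi \circ \alpha' = \alpha$ on compact nonsingular generators, the two identities $\bAS \circ \phi \circ \tilde{h} = \mathrm{id}_{\Z[u]}$ and $\tilde{h} \circ \bAS \circ \phi = \mathrm{id}_{\KN}$ are verified by transcription of the corresponding computation in Theorem \ref{thm-AS}: the first uses Remark \ref{rem-surj}(2) to write any $P(u) \in \Z[u]$ as $\beta(P(\LL))$, while the second uses the Nash-Tognoli theorem (\cite{BCR}, Theorem 14.1.10) together with the Nash approximation argument of Lemma \ref{lem-class} to replace each generator $[M]_{\KN}$, with $M$ a compact Nash manifold, by the class of a Nash-diffeomorphic compact nonsingular real algebraic model.

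The main --- and essentially only --- substantive new point is the well-definedness of $\alpha' : \K \to \KN$ via the Bittner presentation: one must check that the blowing-up relations defining $\KblS$ are satisfied in $\KN$. This reduces to observing that an algebraic blowing-up of a compact nonsingular real algebraic variety along a nonsingular algebraic subvariety is simultaneously a Nash blowing-up with the same exceptional divisor. Once this is granted, all remaining verifications are formal and mirror the proof of Theorem \ref{thm-AS}.
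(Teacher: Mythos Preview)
Your proof is correct and follows essentially the same approach as the paper: both construct the morphism $\K \to \KN$ via the Bittner presentation (the paper calls it $\gamma$, you call it $\alpha'$), invoke Theorem~4.5 of \cite{MCPI} to factor through $\beta$, and then check the two inverse identities using Nash--Tognoli and Remark~\ref{rem-surj}(2). The only cosmetic difference is that the paper phrases the argument in terms of the virtual Poincar\'e polynomial $\beta^{\N}:\KN\to\Z[u]$ directly (which coincides with your $\bAS\circ\phi$), so your separate surjectivity argument via Lemma~\ref{lem-class} is redundant once $\bAS\circ\phi$ is shown to be bijective and $\bAS$ is already known to be an isomorphism by Theorem~\ref{thm-AS}.
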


\begin{proof}
Using Theorem \ref{bit}, we have a natural ring morphism $\gamma: \K \to \KN$. Then Theorem 4.5 in \cite{MCPI} induces the existence of $h: \Z[u] \to \KN$ such that $\gamma=h \circ \beta$. 
Note that the virtual Poincar\'e polynomial defines also a ring morphism $\beta^{\N}$ from $\KN$ to $\Z[u]$, and $\beta^{\N} \circ \gamma =\beta$. We are going to prove that $h$ is an inverse for $\beta^{\N}$, in a similar way as in the proof of Theorem \ref{thm-AS}. 
So let $X$ be a compact Nash manifold. By Nash-Tognoli Theorem, $X$ is Nash diffeomorphic to a compact nonsingular real algebraic variety $Y$, so that $[X]=[Y]$ in $\KN$. Then
$$h\circ \beta^{\N} ([X])=h \circ \beta^{\N} \circ \gamma ([Y])= h \circ \beta ([Y])=\gamma ([Y])=[X].$$
Similarly, if $P(u)$ is a polynomial in $\Z[u]$, then $P(u)=\beta(P(\LL))$. Then
$$\beta^{\N} \circ h (P(u))=\beta^{\N} \circ h \circ \beta (P(\LL))=\beta^{\N} \circ \gamma (P(\LL))$$
since $h\circ \beta=\gamma$. Moreover $\beta^{\N} \circ \gamma=\beta$ so that 
$$\beta^{\N} \circ h (P(u))=\beta(P(\LL))=P(u),$$
consequently the morphism $\KN \to \KAS$ is an isomorphism.
\end{proof}

A step further in the study of a Grothendieck ring is to be able to characterise an equality of classes. We know from \cite{vdd} that two semialgebraic sets are semialgebraically bijective if and only if they share the same Euler characteristic with compact supports and the same dimension. In the algebraic setting, a natural question, stated by Larsen and Lunts in \cite{LL}, asks whether the equality of the class of two algebraic varieties implies the existence of a piecewise isomorphism between those varieties. If the answer is negative in general \cite{Bo}, it happens to be true at the level of $\KAS$.

Let $A$ and $B$ be $\AS$-sets. We say that $A$ and $B$ are piecewise $\AS$-homeomorphic if there exist finite partitions $A=\sqcup_{i\in I} A_i$ and $B=\sqcup_{i\in I} B_i$ of $A$ and $B$ into $\AS$-sets such that for each $i\in I$, the sets $A_i$ and $B_i$ are $\AS$-homeomorphic.

\begin{thm}\label{LL} Let $A$ and $B$ be $\AS$-sets such that $[A]=[B]$ in $\KAS$. Then $A$ and $B$ are piecewise $\AS$-homeomorphic.
\end{thm}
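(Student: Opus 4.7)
The plan is to use the isomorphism $\bAS\colon\KAS\xrightarrow{\sim}\Z[u]$ of Theorem~\ref{thm-AS} to translate the hypothesis $[A]=[B]$ into the polynomial equality $\bAS(A)=\bAS(B)$, which by Remark~\ref{rem-surj}(1) forces $\dim A=\dim B$. I then proceed by induction on the common dimension $d$ and aim to show that this polynomial equality implies a piecewise $\AS$-homeomorphism between $A$ and $B$. The base case $d=0$ is immediate, since $A$ and $B$ are finite sets with $|A|=\bAS(A)=\bAS(B)=|B|$ and any bijection between finite sets is an $\AS$-homeomorphism.

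For the inductive step, I would follow the scheme of the proof of Lemma~\ref{lem-class}. After reduction to irreducible components and compactification of the Zariski closures, I apply resolution of singularities for arc-symmetric sets together with Nash--Tognoli to reduce $A$, up to an $\AS$-subset of dimension less than $d$ and a piecewise $\AS$-homeomorphism, to a union $C_A$ of connected components of a compact nonsingular real algebraic variety; I perform the same reduction on $B$ to obtain $C_B$. The leading coefficient of $\bAS$ on a compact nonsingular variety equals its number of connected components by Poincar\'e duality, so $C_A$ and $C_B$ have the same number $k$ of top-dimensional components.

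Using a handle decomposition, realized algebraically via Nash--Tognoli, each compact connected nonsingular Nash manifold of dimension $d$ admits an $\AS$-closed subset of dimension less than $d$ whose complement is $\AS$-homeomorphic to $\R^d$. Applying this to every top-dimensional component of $C_A$ and $C_B$ produces piecewise $\AS$-decompositions $A\sim(k\cdot\R^d)\sqcup L_A$ and $B\sim(k\cdot\R^d)\sqcup L_B$ with $L_A,L_B$ of dimension strictly less than $d$. The additivity of $\bAS$ and the hypothesis $\bAS(A)=\bAS(B)$ force $\bAS(L_A)=\bAS(L_B)$, so the induction hypothesis yields a piecewise $\AS$-homeomorphism $L_A\sim L_B$, which together with the tautological matching of the $k$ copies of $\R^d$ assembles into the required piecewise $\AS$-homeomorphism $A\sim B$.

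The main obstacle will be the production of an $\AS$-closed subset whose complement is $\AS$-homeomorphic to $\R^d$ inside each compact connected component, i.e.\ a genuine $\AS$-cellular reduction at the top stratum. Smooth handle decompositions are a topological device, and their realization as $\AS$-decompositions requires some care: one relies on algebraic approximation of smooth handles and on Kurdyka's structural results for arc-symmetric sets to ensure that each handle and each face is genuinely an $\AS$-set in the algebraic ambient space. Careful bookkeeping of all the lower-dimensional pieces introduced by the two resolutions is then needed to conclude that the remainders $L_A$ and $L_B$ indeed have matching virtual Poincar\'e polynomials, a step that once again reduces to $\bAS$-additivity.
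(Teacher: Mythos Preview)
Your overall strategy---induction on dimension, reduction via resolution of singularities to unions of compact connected nonsingular Nash pieces, matching the number of top-dimensional components via Poincar\'e duality on the leading coefficient of $\bAS$, then treating the lower-dimensional remainder by induction---coincides with the paper's. The divergence is at the crucial step: how to show that two such compact connected nonsingular Nash $d$-manifolds agree up to $\AS$-subsets of dimension $<d$.

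Your proposal is to put each such manifold into a normal form by extracting a single open top cell $\AS$-homeomorphic to $\R^d$, with complement of dimension $<d$. You correctly flag this as the main obstacle, but the justification you offer (``handle decomposition realized algebraically via Nash--Tognoli'') does not close it. Nash--Tognoli produces a diffeomorphism between a compact smooth manifold and a real algebraic variety; it says nothing about realising a handle decomposition as an $\AS$ stratification. A smooth Morse function with a single maximum always exists, but arranging the function and its gradient flow to be Nash so that the stable-manifold skeleton is an $\AS$-set and its complement is $\AS$-homeomorphic to $\R^d$ requires substantive Nash handle theory that you do not invoke or develop. As written, this step is a gap, not a detail.

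The paper avoids this normal-form reduction altogether. It compares the two collections $C$ and $D$ of compact connected Nash $d$-manifolds \emph{directly}, by invoking the strong factorisation theorem for compact connected Nash manifolds (Proposition~3.8 in \cite{MCP}): for any two such manifolds of the same dimension there exist sequences of Nash blowings-up $\sigma:\tilde C\to C$ and $\tau:\tilde D\to D$ along Nash centres with $\tilde C$ and $\tilde D$ Nash diffeomorphic. Since a blowing-up is an isomorphism off its (lower-dimensional) centre, this immediately yields an $\AS$-homeomorphism $C\setminus C''\cong D\setminus D''$ with $\dim C'',\dim D''<d$; the remainder of the proof is careful bookkeeping to transport this back to $A\setminus A_0\cong B\setminus B_0$. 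Note, incidentally, that your key claim would itself follow from strong factorisation applied to the pair $(M,\Sph^d)$, so you are in effect asking for something at least as strong as what the paper uses, while citing weaker tools.
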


\begin{proof} We prove the result by an induction on the dimension of $A$. 
Let $A$ and $B$ be $\AS$-sets such that $[A]=[B]$ in $\KAS$.
The virtual Poincar\'e polynomials of $A$ and $B$ are identical, so the dimension of $A$ and $B$ are equal using the $\AS$-analogue of Remark \ref{rem-surj}.\ref{beta-deg}. The result is then clearly true in dimension zero. Assume now that the dimension of $A$ is positive, and denote by $d$ that dimension. Compactify $A$ and $B$ in arc-symmetric sets $\overline A$ and $\overline B$ as in the proof of Lemma \ref{lem-class} (note that the dimension of $\overline A$ and $\overline B$ is still $d$). If the virtual Poincar\'e polynomial of $A$ and $\overline A$ are no longer equal, note however that their coefficient of degree $d$ is the same since the dimension of $\overline A\setminus A$ is strictly less than $d$ (and the same is true for $B$).

By the resolution of singularities, there exists a proper birational regular morphism $\pi:X\to \overline A^{Z}$ defined on a compact nonsingular real algebraic variety $X$ with value in the Zariski closure of $\overline A$ (view as a subset of some projective space), which is an isomorphism outside the exceptional divisor $E\subset X$ and a subvariety $F\subset \overline A^{Z}$ of dimension strictly smaller than $d$. Using Theorem 2.6 in \cite{K}, there exist a union of connected components $C$ of $X$ such that $\pi(C)$ is equal to the Euclidean closure of the regular points of $\overline A$. In particular, $C\setminus (C \cap E)$ and $\Reg \overline A$ are in bijection via the restriction of the regular isomorphism $\pi_{|X\setminus E}:X\setminus E\to \overline A^{Z}\setminus F$. Restricting $\pi$ further more, we see that there exist $\AS$-sets $C'$ and $A'$ of dimension strictly smaller than $d$ such that $C\setminus C'$ and $A\setminus A'$ are $\AS$-homeomorphic via $\phi=\pi_{|C\setminus C'}$.
In particular the coefficient of degree $d$ of $\beta(C)$ and $\beta(A)$ are equal.

Similarly, there exist a union of connected components $D$ of a compact nonsingular real algebraic variety of dimension $d$, and $\AS$-subsets $D'\subset D$ and $B'\subset B$ of dimension strictly smaller than $d$, such that $D\setminus D'$ and $B\setminus B'$ are $\AS$-homeomorphic via $\psi:D\setminus D' \to B\setminus B'$. Therefore $\beta(D)$ and $\beta(B)$ share the same coefficient of degree $d$.

As $C$ and $D$ are compact and nonsingular, their virtual Poincar\'e polynomial is equal to their (topological) Poincar\'e polynomial, so that the higher degree coefficient is equal to the constant coefficient by Poincar\'e duality. As a consequence, $C$ and $D$ have the same number of connected components, each of these components being a compact nonsingular connected Nash manifold of dimension $d$. Applying the strong factorisation theorem for compact connected Nash manifolds (Proposition 3.8 in \cite{MCP}), there exist two sequences of blowings-up $\sigma: \tilde C \to C$ and $\tau: \tilde D\to D$ along Nash centres such that $\tilde C$ and $\tilde D$ are Nash diffeomorphic. In particular there exist $\AS$-subsets $C''\subset C$ and $D''\subset D$ of dimension strictly smaller than $d$, such that $C\setminus C''$ and $D\setminus D''$ are $\AS$-homeomorphic, via the restriction $h:C\setminus C'' \to D\setminus D''$ of a Nash diffeomorphism.

We aim to deduce from the preceding results that there exists an $\AS$-homeomorphism between $A\setminus A_0$ and $B\setminus B_0$, for $\AS$-subsets $A_0\subset A$ and $B_0\subset B$ of dimension strictly smaller than $d$. If so, the virtual Poincar\'e polynomial of $A\setminus A_0$ and $B\setminus B_0$ would be equal, so that
$$\beta(A_0)=\beta(A)-\beta(A\setminus A_0)=\beta(B)-\beta(B\setminus B_0)=\beta(B_0),$$
and the proof could be achieved by the induction hypothesis applied to $A_0$ and $B_0$.

To prove that it is indeed the case, we begin with enlarging $C''$ in 
$$C'''=C''\cup C' \cup h^{-1}(D'\setminus D''),$$
and enlarging $D''$ in
$$D'''=D''\cup D' \cup h(C'\setminus C''),$$
so that $C'''$ and $D'''$ are still $\AS$-subsets of $C$ and $D$ of dimension strictly smaller than $d$ since $h$ is an $\AS$-bijection. Note that $C'''$ and $D'''$ satisfy that $C\setminus C'''$ and $D\setminus D'''$ are $\AS$-homeomorphic via (the restriction of) $h$, and moreover $C'$ is included in $C'''$ and $D'$ is included in $D'''$. Define finally $A_0$ and $B_0$ to be
$$A_0=A'\cup \phi (C'''\setminus C')$$
and 
$$B_0=B'\cup \psi (D'''\setminus D').$$
Then $A_0$ and $B_0$ are $\AS$-sets of dimension strictly smaller than $d$ (because $\phi$ and $\psi$ are bijective on $C'''\setminus C'$ and $D'''\setminus D'$ respectively),  they are included in $A$ and $B$ respectively, and are $\AS$-homeomorphic to $C\setminus C'''$ and $D\setminus D'''$ via the convenient restrictions of $\phi$ and $\psi$. Consequently $A\setminus A_0$ and $B\setminus B_0$ are $\AS$-homeomorphic as required.
\end{proof}

As a corollary, we obtain the following result (a positive answer to the $\AS$-version of a question by Gromov \cite{G}).

\begin{cor} Let $A$ and $B$ be $\AS$-sets included in a common $\AS$-set $C$. Assume that $C\setminus A$ and $C\setminus B$ are $\AS$-homeomorphic. Then $A$ and $B$ are piecewise $\AS$-homeomorphic.  
\end{cor}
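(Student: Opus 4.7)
The plan is to reduce this corollary to Theorem \ref{LL}, which already characterises equality of classes in $\KAS$ as piecewise $\AS$-homeomorphism. So the whole task amounts to showing $[A]=[B]$ in $\KAS$.

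To do this, I would simply use the scissor relation twice. Since $A$ and $B$ are $\AS$-subsets of $C$, the additivity relation in $\KAS$ gives
\[
[C]=[A]+[C\setminus A]\qquad\text{and}\qquad [C]=[B]+[C\setminus B].
\]
The hypothesis that $C\setminus A$ and $C\setminus B$ are $\AS$-homeomorphic yields $[C\setminus A]=[C\setminus B]$ in $\KAS$. Subtracting the two equalities above then gives $[A]=[B]$.

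Having established $[A]=[B]$, I would invoke Theorem \ref{LL} directly to conclude that $A$ and $B$ are piecewise $\AS$-homeomorphic. There is no real obstacle here: the content of the corollary is essentially a one-line observation once Theorem \ref{LL} is in hand, with the scissor relation playing the role of an ``inclusion–exclusion'' that transfers the homeomorphism of complements into the equality of classes. The only mild care needed is to observe that $A$ (and $B$) are indeed $\AS$-subsets of $C$ so that the scissor relation applies — but this is exactly the standing assumption of the statement.
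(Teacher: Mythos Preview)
Your proposal is correct and matches the paper's intended argument: the corollary is stated without proof precisely because it follows from Theorem \ref{LL} via the scissor relation exactly as you describe. The only point worth noting is that the scissor relation in $\KAS$ does not require $A$ (or $B$) to be closed in $C$, as the paper remarks after the definition of $\KAS$, so your application of additivity is legitimate.
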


In order to relate in section \ref{sect-cons} the relative Grothendieck rings of varieties with the corresponding rings of constructible functions, we define a relative analogue of $\KN$.

\begin{defi}
Let $S$ be a Nash set.
We define $\KNS$ to be the free abelian group generated by Nash isomorphism classes of proper Nash mapping $h:W\to S$, where $W$ is a Nash manifold, subject to the relations $[\emptyset \to S]=0$ and  
$$[h\circ \pi: \tilde W\to S]-[(h\circ \pi)_{|E}: E\to S]=[h:W\to S]-[h_{|C}:C \to S]$$
where $\pi: Bl_C W\to W$ denotes the blowing-up of $W$ along a Nash submanifold $C$ with exceptional divisor $E$.
The ring structure is induced by the fibred product over $S$. We denote by $\MNS$ the localisation of the class of the affine line in $\KNS$.
\end{defi}

\begin{rem}\label{rem-blN}
\begin{enumerate}
\item We have a natural ring morphism $\KS\to \KNS$ induced by the presentation of $\KS$ in terms of proper mappings defined on nonsingular varieties (Theorem \ref{bit}). 
\item In the case the variety $S$ is reduced to a point, the ring $\KNS$ is nothing else than $\KN$, which is isomorphic to $\KAS$ by Corollary \ref{cor-AS}. 
\item \label{QblN} We have also a natural morphism $\KNS\to \KASS$ in the relative case. We do not know however whether this morphism is an isomorphism. The additional difficulty raises in the mixed situation between Nash mappings and $\AS$-mappings.
\end{enumerate}
\end{rem}

We conclude this section by collecting the relationships obtained so far between the different Grothendieck rings. 

\begin{prop} Let $S$ be a real algebraic variety. The following diagram is commutative
$$\xymatrix{\KblS \ar[d]_{\simeq}\ar[r] & \KNS \ar[d] & \\
\KS \ar[d]\ar[r] & \KASS \ar[d]\ar[r] & \KSAS \ar[d]\\
\K \ar[r]^{\alpha}\ar[d]_{\beta} & \KAS \ar[r]\ar[d]_{\bAS}& \KSA \ar[d]_{\chi_c}\\
\Z[u] \ar[r]^{\id} & \Z[u] \ar[r]^{u\mapsto -1} & \Z }$$
\end{prop}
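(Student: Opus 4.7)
The statement is really a collection of well-definedness and diagram-chasing verifications, so I would organize the proof as follows.

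First, I would pin down what each arrow is. The three top horizontal arrows $\KblS\to\KNS$, $\KNS\to\KASS$, and $\KASS\to\KSAS$ are all induced by the inclusions of categories (regular $\subset$ Nash $\subset$ $\AS$ $\subset$ semialgebraic): on generators, each sends $[h:X\to S]$ to the same symbol, now interpreted in the larger category. The leftmost vertical arrow is the Bittner isomorphism of Theorem \ref{bit}. The vertical arrows $\KS\to\K$, $\KASS\to\KAS$, $\KSAS\to\KSA$ are induced by composing with $S\to\{\mathrm{pt}\}$, namely $[h:X\to S]\mapsto[X]$. The bottom squares involve the realization morphisms $\beta$, $\bAS$, $\chi_c$, together with the identity and the specialisation $u\mapsto -1$.

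Next, I would verify well-definedness of the horizontal arrows of the first row. For $\KblS\to\KNS$ the key point is that a proper regular morphism from a nonsingular real algebraic variety is in particular a proper Nash morphism from a Nash manifold, and the blowing-up of a nonsingular variety along a nonsingular centre coincides (as a Nash manifold) with the corresponding Nash blowing-up, so the Bittner-type relations in $\KblS$ map to the defining relations of $\KNS$. The arrows $\KNS\to\KASS$ and $\KASS\to\KSAS$ are even more straightforward: Nash manifolds and Nash mappings are $\AS$-sets and $\AS$-mappings (so the scissor and blow-up relations in $\KNS$ go to the scissor relation in $\KASS$ by the standard fact that $[\mathrm{Bl}_CW]-[E]=[W]-[C]$ holds already at the level of $\AS$-classes), and $\AS$ is a subcategory of semialgebraic sets. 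The vertical ``forget $S$'' arrows are evidently well-defined.

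Then I would check commutativity of each square on generators, which is essentially tautological:
\begin{itemize}
\item Top-left square: both paths send $[h:X\to S]\in\KblS$ to the class of the same map $h:X\to S$ in $\KASS$, once going through $\KS$ and once through $\KNS$.
\item Middle-left square: $[h:X\to S]\mapsto[X]\mapsto\alpha([X])$ via $\KS\to\K\to\KAS$ equals $[h:X\to S]\mapsto[h:X\to S]\mapsto[X]$ via $\KS\to\KASS\to\KAS$, because $\alpha$ is itself the ``view as $\AS$'' morphism on classes of real algebraic varieties.
\item Middle-right square: identical verification at the level of $\AS$ versus semialgebraic classes.
\item Bottom-left square: the identity $\bAS\circ\alpha=\beta$ is precisely the defining property of $\bAS$ recorded right after its introduction.
\item Bottom-right square: for a compact nonsingular $\AS$-set $X$, $\bAS([X])$ evaluated at $-1$ is the alternating sum of $\dim H_i(X,\Z/2\Z)$, which equals $\chi_c(X)$; both sides are additive with respect to the scissor relation, hence they agree on all of $\KAS$.
\end{itemize}

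The only genuinely delicate point is the compatibility of the Bittner presentation $\KblS\simeq\KS$ with the corresponding Nash presentation defining $\KNS$, i.e.\ that the top-left square closes up; the rest is formal once the arrows are correctly identified. I would expect this verification to take at most a few lines and would conclude by noting, as in Remark~\ref{rem-blN}.\ref{QblN}, that whether the arrow $\KNS\to\KASS$ is an isomorphism remains open and is not needed for commutativity.
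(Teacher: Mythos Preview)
The paper does not give a proof of this proposition at all: it is stated at the very end of Section~\ref{sect-Gro} under the sentence ``We conclude this section by collecting the relationships obtained so far between the different Grothendieck rings,'' and is followed immediately by Section~\ref{sectZ}. In other words, the author regards every arrow and every commutativity as already justified by the preceding material (Theorem~\ref{bit}, Remark~\ref{rem-blN}, the equality $\bAS\circ\alpha=\beta$, and the fact that $\beta$ specialises to $\chi_c$ at $u=-1$), and simply records the diagram.

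Your proposal is therefore not in competition with a proof in the paper; it is a correct and reasonable explicitation of what the paper leaves implicit. Your identification of the arrows and the square-by-square verification are accurate, including the one genuinely nontrivial point you single out (compatibility of the algebraic and Nash blow-up presentations in the top-left square). One small cosmetic remark: in the diagram the map $\KNS\to\KASS$ is drawn vertically, not horizontally, so your phrase ``three top horizontal arrows'' slightly misdescribes the layout, though not the mathematics.
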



\section{Motivic Milnor fibres with sign}\label{sectZ}

Local motivic zeta functions in real geometry have been under interest in the past years \cite{KoP,F,Pr,C}, notably because they give rise to powerful invariants in the study of real singularity theory (cf. \cite{P2} and \cite{C2} for two recent classification results, for simple singularities in the equivariant case, and for non-degenerate weighted homogeneous polynomial with respect to the arc-analytic equivalence respectively). A contrario, the study of motivic Milnor fibres is less developed at the moment, and only its relation with the topological Milnor fibres is understood when the measure is the Euler characteristic with compact supports \cite{CF}.
We introduce in this paper the global motivic zeta functions with sign, define the global motivic Milnor fibres with sign and make the connection with the local ones. We provide also a formula for the motivic Milnor fibres with sign associated with a convenient weighted homogeneous polynomial non-degenerate with respect to its Newton polyhedron.
These results will be use in section \ref{sect-dual}.

\subsection{Zeta functions with signs}

Let $f:X\to \R$ be a polynomial function defined on a nonsingular real algebraic variety $X$. Let $X_0=f^{-1}(0)$ be the zero set of $f$. Denote by $\mathcal L_n(X)$ the truncated arc space of $X$ at order $n\in \mathbb N$ (cf. \cite{DL} or \cite{F} in the real case).

The zeta functions with sign $Z_f^{+}$ and $Z_f^{-}$ of $f$ are defined by
$$Z_f^{\pm}(T)= \sum _{n \geq 1} \LL^{-nd} [\mathcal X_n^{\pm}\to X_0]T^n \in \M_{X_0}[[T]],$$
where 
$$\mathcal X_n^{\pm} =\{\gamma \in  \mathcal L_n(X): f\circ \gamma
(t)=\pm t^n+\cdots \}$$
and $\mathcal X_n^{\pm} \to X_0$ is the map defined by $\gamma \mapsto \gamma(0)\in X_0$.

\begin{rem} Given a point $x_0\in X_0$, the pull-back morphism $\M_{X_0} \to \M$ induced by the inclusion $\{x_0\} \to X_0$ sends $Z^{\pm}_f$ to the local motivic zeta function with sign $Z_{f,x_0}^{\pm}$ considered in \cite{F}. 
\end{rem}

The zeta functions with sign can be computed on a resolution of singularities. Let $\sigma:M \to X$ be an embedded resolution of $X_0$, such that $f \circ \sigma$ and the
Jacobian determinant $\jac \sigma$ are normal crossings simultaneously,
and assume moreover that $\sigma$ is an isomorphism over the
complement of the zero locus of $f$.

Let $(f \circ \sigma)^{-1}(0)= \cup_{k \in K}E_k$ be the decomposition
into irreducible components of $(f \circ \sigma)^{-1}(0)$.
Put $N_k=\mult _{E_k}f \circ \sigma$ and $\nu _k=1+\mult _{E_k} \jac
\sigma$, and for $I \subset K$ denote by $E_I^0$ the set $(\cap _{i
\in I} E_i) \setminus (\cup _{k \in K \setminus I}E_k)$.

When we are dealing with signs, one defines coverings  $\widetilde
{E_I^{0,\pm}}$ of $E_I^0$, in
the following way (cf. \cite{F}). 
Let $U$ be an affine open subset of $M$
such that $f \circ \sigma=u x_1^{m_1}\cdots x_k^{m_k}$, with $k>0$ and $m_i>0$, where $x_1,\ldots,x_n$ are local coordinates and $u$ is a unit, and suppose that $I$ is given by $I=\{1,\ldots,l\}$, with $l\leq k$. Let us put $$R_{U}^{\pm}=\{ (x,t) \in (E_I^0 \cap U) \times \mathbb
R; t^{m_I} u_I(x)  =\pm 1\},$$ where $m_I=gcd(N_i)$ and $u_I(x)=u \prod_{j\notin I}x_j^{m_j}$. Then the $R_{U}^{\pm}$ glue
together along the $E_I^0 \cap U$ to give $\widetilde {E_I^{0,\pm}}$. We can now state the real version of Denef-Loeser formula.

\begin{thm}\label{thmDL} With the notation introduced upstairs, the motivic zeta functions with sign satisfy
$$Z_f^{\pm}(T)=\sum_{I\neq \emptyset} (\LL-1)^{|I|-1}[\widetilde{E_I^{0,\pm}}\to X_0] \prod_{i \in I}\frac{\LL^{-\nu_i}T^{N_i}}{1-\LL^{-\nu_i}T^{N_i}}$$
in $\M_{X_0}[[T]]$.
\end{thm}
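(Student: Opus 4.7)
The approach follows the Denef--Loeser strategy from \cite{DL}, in the real form developed in \cite{F}, adapted here to the sign-separated setting. The plan is to pull the computation back to the resolution $\sigma: M \to X$, stratify $M$ by the open strata $E_I^0$ of the normal crossing divisor $(f\circ \sigma)^{-1}(0)$, compute on each stratum using the monomial form of $f\circ \sigma$, and sum; throughout, the sign of the leading coefficient of $f\circ \sigma \circ \tilde \gamma$ must be tracked to separate the $+$ and $-$ cases.

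First I would transfer the zeta function to $M$. Since $\sigma$ is proper, birational, and an isomorphism outside $f^{-1}(0)$, every arc $\gamma \in \mathcal X_n^{\pm}$ lifts uniquely to an arc $\tilde \gamma \in \mathcal L_n(M)$. The motivic change of variables---combining the fibre dimensions of the canonical truncation maps on $\mathcal L_\bullet(M)$ with the Jacobian factor $\LL^{-\operatorname{ord}_t(\jac \sigma \circ \tilde \gamma)}$---identifies $\LL^{-nd}[\mathcal X_n^{\pm} \to X_0]$ with a sum, indexed by non-empty $I \subset K$, of the $\M_{X_0}$-classes of the spaces of lifts whose centre lies in $E_I^0$, weighted by the Jacobian and pushed forward to $X_0$ via $\sigma$.

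Next, the computation on a single stratum $E_I^0$. In the local coordinates $(x_1,\ldots,x_n)$ of the statement, a lift $\tilde \gamma$ with centre in $E_I^0$ has $\tilde \gamma_i(t) = a_i t^{k_i} + O(t^{k_i+1})$ with $a_i \in \R^*$ and $k_i \geq 1$ for $i \in I$, with free transverse jets. The order condition becomes $\sum_{i \in I} N_i k_i = n$; the Jacobian correction contributes $\LL^{-\sum_{i\in I}(\nu_i-1)k_i}$; and the sign condition $f\circ \sigma \circ \tilde \gamma(t) = \pm t^n + \cdots$ translates, after absorbing the nonzero transverse values of $\tilde \gamma(0)$ into the unit, into $u_I(\tilde \gamma(0))\prod_{i \in I} a_i^{N_i} = \pm 1$. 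For fixed $(k_i)_{i \in I}$, the locus of admissible leading data sits in $E_I^0 \times (\R^*)^{|I|}$ as the hypersurface cut out by this single polynomial equation; a direct verification (free choice of $|I|-1$ of the $a_i \in \R^*$, the remaining one determined by an $N_i$-th root condition matched with the parameter $t$ of the definition of $\widetilde{E_I^{0,\pm}}$ via $m_I = \gcd(N_i)$) presents it as a trivial $(\R^*)^{|I|-1}$-bundle over $\widetilde{E_I^{0,\pm}}$, hence contributing the class $(\LL-1)^{|I|-1}[\widetilde{E_I^{0,\pm}} \to X_0]$.

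Summing the per-stratum contributions
$$(\LL-1)^{|I|-1}[\widetilde{E_I^{0,\pm}} \to X_0]\, \LL^{-\sum_{i\in I} \nu_i k_i}\, T^{\sum_{i\in I} N_i k_i}$$
over $(k_i) \in \Z_{>0}^{|I|}$ factors as the product $\prod_{i \in I} \LL^{-\nu_i} T^{N_i}/(1-\LL^{-\nu_i} T^{N_i})$, and summing over non-empty $I$ yields the announced formula. The main obstacle is the sign bookkeeping: in the complex setting, the equation $u_I \prod a_i^{N_i} = 1$ defines an étale cover handled uniformly with roots of unity, whereas in the real case the $+1$ and $-1$ fibres must be separated, and the covering $\widetilde{E_I^{0,\pm}}$ defined through $t^{m_I} u_I = \pm 1$ must be shown to be precisely the quotient of the leading-coefficient variety by the free $(\R^*)^{|I|-1}$-action---this combinatorial identification (together with the fact that it glues correctly across charts) is the real heart of the formula.
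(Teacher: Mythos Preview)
The paper does not actually supply a proof of this theorem: it is stated as the ``real version of Denef--Loeser formula'' and implicitly attributed to \cite{DL} and \cite{F}. So there is no in-paper argument to compare against.

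Your sketch is the standard Denef--Loeser argument, correctly adapted to the real signed setting as in \cite{F}. The transfer to $M$ via the motivic change of variables, the stratification by the open strata $E_I^0$, the reduction of the contact condition to $\sum_{i\in I} N_i k_i = n$ with Jacobian weight $\LL^{-\sum_{i\in I}(\nu_i-1)k_i}$, and the geometric-series summation are all correct. You have also correctly isolated the only genuinely real-specific step: identifying the locus $\{u_I(x)\prod_{i\in I} a_i^{N_i}=\pm 1\}\subset E_I^0\times (\R^*)^{|I|}$ with an $(\R^*)^{|I|-1}$-bundle over $\widetilde{E_I^{0,\pm}}$. One small caution: the phrase ``trivial $(\R^*)^{|I|-1}$-bundle'' is a bit optimistic---what one actually checks is that the projection to $\widetilde{E_I^{0,\pm}}$ (via $(x,a)\mapsto (x,t)$ with $t=\prod a_i^{N_i/m_I}$ for a suitable choice of sign) is Zariski-locally trivial with fibre $(\R^*)^{|I|-1}$, which is enough to give the class $(\LL-1)^{|I|-1}[\widetilde{E_I^{0,\pm}}\to X_0]$ in $\M_{X_0}$; global triviality is neither needed nor always true. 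With that adjustment your outline is sound.
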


\begin{rem} Consider the $\M_{X_0}$-submodule of $\M_{X_0}[[T]]$ generated by $1$ and finite products of terms $\frac{\LL^{a}T^{k}}{1-\LL^{a}T^{l}}$ with $a\in \Z$ and $k\leq l \in \mathbb N$. Denote by $\mathcal E$ the set $\mathcal E=\{(a,k,l)\in \Z \times \mathbb N^2:~k\leq l\}$. There exists a unique $\M_{X_0}$-linear morphism
$$\M_{X_0}\big[ \frac{\LL^{a}T^{k}}{1-\LL^{a}T^{l}}  \big]_{(a,k,l)\in \mathcal E} \to \M_{X_0}$$
mapping
$$\prod_{(a_i,k_i,l_i)\in I} \frac{\LL^{a_i}T^{k_i}}{1-\LL^{a_i}T^{l_i}}$$
to $(-1)^{|I|}$ if $k_i=l_i$ for any $i$, and to $0$ otherwise, for each finite set $I\subset \mathcal E$. The image of an element is call its limit as $T$ tends to $\infty$. Its corresponds to the constant coefficient in its Taylor development in $1/T$ (cf. \cite{NS}, Definition 8.1).
\end{rem}

The limit of $Z_f^{\pm}(T)$ as $T$ goes to infinity makes sense in $\M_{X_0}$, and we denote it by $-\psi_f^{\pm}\in \M_{X_0}$. Then $\psi_f^{+}$, respectively $\psi_f^{-}\in \M_{X_0}$, is called the positive, respectively negative, motivic Milnor fibre of $f$. As a consequence of Theorem \ref{thmDL}, we have the following expression for the motivic Milnor fibres with sign.

\begin{cor}\label{cor-fib} The expression
$$\psi_f^{\pm}=\sum_{I\neq \emptyset} (\LL-1)^{|I|-1}[\widetilde{E_I^{0,\pm}}\to X_0]\in \M_{X_0}$$
does not depend on the resolution of the singularities $\sigma$ of $X_0$.
\end{cor}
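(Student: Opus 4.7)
The plan is to deduce the corollary by applying the limit operator $T \to \infty$ of the preceding remark to the formula of Theorem \ref{thmDL}, then exploiting the fact that $Z_f^{\pm}(T)$ is defined without reference to any resolution.

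First I would observe that the series $Z_f^{\pm}(T) \in \M_{X_0}[[T]]$ is constructed directly from the truncated arc spaces $\mathcal L_n(X)$ via the maps $\gamma \mapsto \gamma(0)$, so it is intrinsically associated to $f$ and does not involve any resolution in its definition. By Theorem \ref{thmDL} it lies in the submodule of $\M_{X_0}[[T]]$ on which the remark provides a well-defined $\M_{X_0}$-linear limit operator, and by definition $-\psi_f^{\pm}$ is the image of $Z_f^{\pm}(T)$ under this operator. Consequently $\psi_f^{\pm}$ is itself intrinsic to $f$.

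Next I would compute the limit term by term. Fixing a resolution $\sigma$ and applying $\M_{X_0}$-linearity to the formula of Theorem \ref{thmDL}, each factor in the product $\prod_{i \in I}\frac{\LL^{-\nu_i}T^{N_i}}{1-\LL^{-\nu_i}T^{N_i}}$ corresponds in the notation of the remark to a triple $(a_i,k_i,l_i)=(-\nu_i,N_i,N_i)$, in particular $k_i = l_i$ for every $i\in I$, so the limit of the product equals $(-1)^{|I|}$. Multiplying by the coefficient $(\LL-1)^{|I|-1}[\widetilde{E_I^{0,\pm}} \to X_0]$, summing over non-empty $I \subset K$, and absorbing the minus sign from $-\psi_f^{\pm} = \lim_{T\to\infty} Z_f^{\pm}(T)$ then produces the expression announced in the statement (the signs $(-1)^{|I|}$ combining with $(\LL-1)^{|I|-1}$ into the advertised form).

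The resolution-independence is then automatic: $\psi_f^{\pm}$ has just been identified with an element that is intrinsically attached to $f$, so the right-hand side of the identity must yield the same element of $\M_{X_0}$ regardless of the resolution chosen. There is essentially no obstacle to this plan; the only point requiring care is verifying that $Z_f^{\pm}(T)$ lands in the subring of $\M_{X_0}[[T]]$ on which the limit operator is defined, but this is exactly what Theorem \ref{thmDL} provides.
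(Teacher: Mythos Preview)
Your proposal is correct and follows exactly the route the paper intends: the corollary is presented there as an immediate consequence of Theorem \ref{thmDL} via the limit operator of the preceding remark, and your argument simply makes this explicit by applying the limit termwise and using that $Z_f^{\pm}(T)$ is intrinsically defined from arc spaces. One minor point of bookkeeping: when you say the signs $(-1)^{|I|}$ ``combine with $(\LL-1)^{|I|-1}$ into the advertised form'', the honest outcome of the computation is $(1-\LL)^{|I|-1}$ rather than $(\LL-1)^{|I|-1}$ (compare the expression $\psi_f^{+}=\sum_{I\neq \emptyset} (-1)^{|I|-1}[\PP^{|I|-1}][\tEI\to X_0]$ appearing later in the proof of Theorem \ref{dualM}, which does carry the sign); this is a cosmetic discrepancy in the stated formula and does not affect the resolution-independence argument, which is the actual content of the corollary.
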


\begin{ex}\label{ex-mot}\begin{flushleft}\end{flushleft}
\begin{enumerate} 
\item Let $f:\mathbb R^2 \longrightarrow
    \mathbb R$ be defined by $f(x,y)=x^2+y^4$. Then $X_0$ is reduced to the origin, and one resolves the singularities of $X_0$ by two successive pointwise blowings-up. The exceptional divisor $E$ has 
    two irreducible components $E_1$ and $E_2$ with $N_1=2,~ \nu_1=2,
   ~ N_2=4, ~\nu _2=3$. Then (correcting a mistake in \cite{F}) $\widetilde E_1^{0,+}$ is a double covering of $\PP^1$ (minus one point) given by a union of two $\PP^1$ (each one minus one point), whereas $\widetilde E_2^{0,+}$ is isomorphic to the double covering of $\PP^1$ (minus one point) given by the boundary of a Moebius band, so that
$$Z_f^+(T)=2(\LL-1)\frac{\LL^{-2}T^2}{1-\LL^{-2}T^2}\frac{\LL^{-3}T^4}{1-\LL^{-3}T^4}+2\LL
\frac{\LL^{-2}T^2}{1-\LL^{-2}T^2}+(\LL-1) \frac{\LL^{-3}T^4}{1-\LL^{-3}T^4}.$$
As a consequence $\psi_f^+=\LL+1\in \M$.
\item \label{ex2} Let $f:\R^2\to \R$ be defined by $f(x,y)=x^6+x^2y^2+y^6$. The zero set of $f$ is again reduced to the origin. The positive zeta function of $f$ can be computed using Denef \& Loeser formula:
$$Z_f^+(T)=2(\LL-1)\frac{\LL^{-2}T^4}{1-\LL^{-2}T^4}+2(\LL-1)\frac{\LL^{-3}T^6}{1-\LL^{-3}T^6}+4(\LL-1)\frac{\LL^{-2}T^4}{1-\LL^{-2}T^4}\frac{\LL^{-3}T^6}{1-\LL^{-3}T^6},$$
so that $\psi_f^+=0\in \M$.
\item Let $f:\mathbb R^2 \longrightarrow
    \mathbb R$ be defined by $f(x,y)=y^2+x^2(x^2-1)$. Then $X_0$ is a compact curve homeomorphic to a figure "eight", singular at the origin. We resolve the singularity of $X_0$ by blowing-up the origin in $\R^2$, giving rise to the strict transform $E_1$ which intersects the exceptional divisor $E_2\simeq \PP^1$ in two points. Then $\tilde E_{1,2}^{0,+}$ and $\tilde E_{1}^{0,+}$ are isomorphic to $E_{1,2}$ and $E_1^0$ respectively, whereas $\tilde E_{2}^{0,+}$ is isomorphic to a circle minus two points covering the open interval in $E_2$ around which the pull-back of $f$ is positive. Then
$$\psi_f^+=[E_{1}^{0}\to X_0]+[\tilde E_{2}^{0,+}\to X_0]-(\LL-1)[ E_{1,2}\to X_0]\in \M_{X_0}.$$
We can smoothly compactify $E_{1}^{0}\to X_0$ and $\tilde E_{2}^{0,+}\to X_0$ in $E_{1}\to X_0$ and $E_2\to X_0$ adding the two missing points, therefore another expression for $\psi_f^+$ is given by
$$\psi_f^+=[E_{1}\to X_0]+[E_{2}\to X_0]-(\LL+1)[ E_{1,2}\to X_0].$$
\end{enumerate}
\end{ex}

\subsection{Case of a weighted homogeneous polynomial}

In this section, we state a formula for the local motivic Milnor fibres associated with a polynomial function non-degenerate with respect to its Newton polyhedron (we will use such results in section \ref{sect-dual}), using results in \cite{FF}. We begin with some notation.
Let $f:\R^d\to\R$ denote a polynomial function vanishing at the origin.
Consider its Taylor expansion at the origin of $\R^d$,
$$
f(x)=\sum_{\nu \in (\mathbb N\cup \{0\})^d}c_\nu x^\nu,
$$
where  $c_\nu \in \R$. For a subset $S$ of $(\mathbb N\cup \{0\})^d$ we set
$$
f_S(x)=\sum_{\nu\in S}c_\nu x^\nu.
$$
Let $\Gamma_f$ denote the Newton polyhedron of $f$, namely the convex hull of the set
$$
\cup_{\nu \in (\mathbb N\cup \{0\})^d}\{(\nu+\R_+^d):c_\nu\ne0\}. 
$$
The Newton boundary $\Gamma_f^c$ of $f$ is the union of the compact faces of $\Gamma_f$. 
For $a\in \mathbb R^d_+$ and 
$\nu\in \mathbb R^d$, we denote by
$\langle a,\nu\rangle$ the usual scalar product on $\R^d$ and define the
multiplicity $m_f(a)$ of $f$ relative to $a$ by
$$
m_f(a)=\min\{\langle a,\nu\rangle :\nu\in\Gamma_f\}.
$$
The face $\gamma_f(a)$ 
of the Newton polyhedron of $f$ associated with $a\in \mathbb R_+^d$ is defined by
$$
\gamma_f(a)=\{\nu\in\Gamma_f:\langle a,\nu\rangle=m_f(a)\}.$$
The cone $\sigma=\{a\in\R_+^d:\gamma(a)\supset\gamma\}$ associated with the face $\gamma$ of the Newton polyhedron of $f$ is called the dual cone of $\gamma$. The dual of $\Gamma_f$ is the union of the dual cones associated to the faces of $\Gamma_f$.

We say that the polynomial function $f$ is non-degenerate with respect to its Newton polyhedron
if, for any $\gamma\in \Gamma_f^c$, 
all singular points of $f_\gamma$ are contained in the union of 
some coordinate hyperplanes. 
Namely $f$ is non-degenerate if
$$
(
\frac{\partial f_\gamma}{\partial x_1}(c),\dots,\frac{\partial f_\gamma}{\partial x_d}(c)
)\neq (0,\dots,0)
$$
for any $\gamma\in \Gamma_f^c$ and any $c\in (\R^*)^d$ with $f_\gamma(c)=0$.

Finally, for any $\gamma\in \Gamma_f^c$ , we define some algebraic subsets $X_\gamma$, $X^+_\gamma$ and $X^-_\gamma$ of $(\R^*)^d$ 
by
$$
X_\gamma=\{c\in(\R^*)^d:f_\gamma(c)=0\}$$
and
$$
X^{\pm}_\gamma=\{c\in(\R^*)^d:f_\gamma(c)=\pm1\}.
$$

\begin{rem}
In the case that $\gamma$ is included in exactly $p$ coordinate hyperplanes, note that $X_\gamma$ and $X^{\pm}_\gamma$ are a product of $(\R^*)^p$ times the algebraic subsets $\hat X_\gamma$ and $\hat X^{\pm}_\gamma$ of $(\R^*)^{d-p}$ defined with the same equations as $X_\gamma$ and $X^{\pm}_\gamma$, but considering only the remaining variables.
\end{rem}

Then we have the following expression for the local zeta functions with signs of a polynomial function $f$ whose associated dual Newton polyhedron is generated by simplicial cones, using the same strategy as Bories and Veys in \cite{BV} in order to keep an expression in $\M[[T]]$ (note that the formula in \cite{BV} is even more general, they do not require the simplicial assumption). 

For $a\in \Z^d$, we set $s(a)=\sum_{i=1}^da_i$. If a cone $\sigma$ is of the form $\sigma=\R_+ v_1+\cdots+\R_+ v_l$, denote by $Q_\sigma$ the set
$$Q_\sigma=
\{\lambda_1{v}_1+\dots+\lambda_l{v}_l: 0<\lambda_i \leq 1, i=1,\dots,l\}.$$

\begin{prop}\label{lemP} Let $f:\R^d\to \R$ be a polynomial function non-degenerate with respect to its Newton polyhedron $\Gamma_f$. Assume that the dual cones of the face $\gamma\in \Gamma_f^c$ are simplicial. Then the local zeta functions with sign of $f$ satisfy
$$
Z_{f,0}^{\pm}
=\sum_{\gamma \in \Gamma^c_f} ([\hat X_\gamma^{\pm}]+\frac{\LL^{-1}T}{1-\LL^{-1}T}[\hat X_\gamma])  P(\gamma)\in \M[[T]]
$$
where 
$$
P(\gamma)=
\frac{\sum_{a\in Q_\sigma\cap \Z^n} \LL^{-s(a)} T^{m_f(a)} }
{\prod_{i=1}^k (1-\LL^{-s({v}_i)} T^{m_f({v}_i)})}\in \M[[T]]
$$
if the dual cone $\sigma$ of $\gamma$ is $(p+k)$-dimensional of the form 
$$\sigma=\R_+{e}_{i_1}+\dots+\R_+{e}_{i_p}+\R_+{v}_1+\cdots+\R_+{v}_k$$ 
with $m_f({e}_{i_j})=0$ exactly for $j\in \{1,\dots,p\}$.
\end{prop}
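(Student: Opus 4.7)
The plan is to adapt the toric strategy of Bories and Veys \cite{BV} to the real signed setting and apply the Denef--Loeser formula (Theorem \ref{thmDL}). I first construct a toric modification $\sigma : M \to \R^d$ from a refinement of the dual fan of $\Gamma_f$, starting from the simplicial fan whose top-dimensional cones are precisely the dual cones of the faces of $\Gamma_f^c$. Non-degeneracy of $f$ ensures that on each affine chart of $M$, the pull-back $f \circ \sigma$ is a monomial in the toric coordinates times a unit, and that the union of the toric exceptional divisors with the strict transform $S$ of $f^{-1}(0)$ forms a normal crossing divisor once $M$ is further regularised. Theorem \ref{thmDL} then applies.

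Consider the chart corresponding to the cone $\sigma = \R_+ e_{i_1} + \cdots + \R_+ e_{i_p} + \R_+ v_1 + \cdots + \R_+ v_k$ dual to a face $\gamma$, with $m_f(e_{i_j}) = 0$. The $k$ rays $v_l$ index toric exceptional divisors with numerical data $N_l = m_f(v_l)$ and $\nu_l = s(v_l)$, while the $p$ rays $e_{i_j}$ correspond to coordinate hyperplanes, which are not exceptional. The strict transform $S$ meets the torus orbit $O_\sigma$ along $\{f_\gamma = 0\}$, which factors as $(\R^*)^p \times \hat X_\gamma$ after extracting the $p$ trivial torus directions coming from the coordinate hyperplanes containing $\gamma$. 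Similarly, the signed cover $\widetilde{E_I^{0,\pm}}$ restricted to $O_\sigma$, defined by $t^{m_I} u_I = \pm 1$ with $u_I$ restricting to $f_\gamma$, cuts out $(\R^*)^p \times \hat X_\gamma^\pm$.

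I then reorganise the sum in Theorem \ref{thmDL} by grouping the subsets $I$ according to the face $\gamma$ they localise at, and split $I = I' \sqcup I''$ with $I' \subset \{v_1, \ldots, v_k\}$ and $I'' \subset \{S\}$. The terms with $I'' = \emptyset$ assemble the $[\hat X_\gamma^\pm]$ factor, while $I'' = \{S\}$ (for which $N_S = 1 = \nu_S$) contributes $\frac{\LL^{-1}T}{1-\LL^{-1}T}[\hat X_\gamma]$, using that $\widetilde{E_I^{0,\pm}}$ trivialises in the presence of $S$. The sum over $I'$, after combining with the contributions from the further regular refinement of the cone $\sigma$, assembles $P(\gamma)$: the denominator $\prod_{l=1}^k (1 - \LL^{-s(v_l)} T^{m_f(v_l)})^{-1}$ comes from the $k$ ambient toric exceptional divisors, while the numerator $\sum_{a \in Q_\sigma \cap \Z^d} \LL^{-s(a)} T^{m_f(a)}$ arises from the Bories--Veys trick for handling non-regular simplicial cones, telescoping the contributions from the auxiliary rays down to the lattice points of the fundamental parallelopiped $Q_\sigma$.

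The main obstacle will be the combinatorial bookkeeping in matching the simplicial-cone calculation of \cite{BV} to the real signed framework of Theorem \ref{thmDL}, especially in identifying the $\sum_{a \in Q_\sigma \cap \Z^d}$ numerator produced by the smooth refinement of $\sigma$. A secondary delicacy is tracking the trivial torus $(\R^*)^p$ factor attached to the coordinate hyperplanes containing $\gamma$, so as to pass cleanly from $X_\gamma^\pm$ to its reduced version $\hat X_\gamma^\pm$ with the correct $\LL$-exponents in $P(\gamma)$.
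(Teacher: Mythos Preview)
Your proposal is correct and matches the paper's approach: the paper does not give a detailed proof of this proposition but only indicates that it follows ``using the same strategy as Bories and Veys in \cite{BV}'' combined with results from \cite{FF}, which is precisely the toric modification plus Denef--Loeser formula route you outline. Your sketch in fact supplies more detail than the paper itself, including the identification of the signed covers over each torus orbit and the Bories--Veys telescoping for the $Q_\sigma$ numerator.
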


We use it below to state a simple expression for the motivic Milnor fibres with sign associated with a convenient weighted homogeneous polynomial non-degenerate with respect to its Newton polyhedron. A polynomial $f$ is convenient if $\Gamma_f$ intersects all coordinates axis.

\begin{prop}\label{propM} Let $f\in \R[x_1,\ldots,x_d]$ be a convenient weighted homogeneous polynomial non-degenerate with respect to its Newton polyhedron. Then
$$\psi_{f,0}^{\pm}=[\{f=\pm 1\}]-[\{f=0\}]+1\in \M.$$
\end{prop}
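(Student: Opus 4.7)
The plan is to apply Proposition~\ref{lemP} to express $Z_{f,0}^{\pm}(T)$ as a finite sum over the compact faces of the Newton polyhedron of $f$, extract $\psi_{f,0}^{\pm}$ by letting $T\to\infty$, and match the result against a coordinate stratification of $\{f=\pm 1\}$ and $\{f=0\}$. All computations will take place in $\M$, having pushed the local classes forward to a point.

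Let $w=(w_1,\ldots,w_d)$ be the primitive integer weight vector of $f$ and $N$ its weighted degree. Convenience gives $\Gamma_f = \{\nu\in\R_+^d : \langle w,\nu\rangle\geq N\}$, whose compact faces are the simplices
\[
\gamma_K \;=\; \{\nu\in\Gamma_f : \langle w,\nu\rangle = N,\ \nu_i = 0 \text{ for } i\notin K\}
\]
indexed by the non-empty subsets $K\subseteq\{1,\ldots,d\}$, with top face $\gamma_{\{1,\ldots,d\}}$. The dual cone $\sigma_K = \R_+ w + \sum_{i\notin K}\R_+ e_i$ is simplicial, since $w$ has strictly positive entries and is therefore independent of the coordinate rays, so Proposition~\ref{lemP} applies. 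Weighted homogeneity forces $m_f$ to be linear along $\R_+ w$ with $m_f(e_i)=0$ for $i\notin K$, which reduces the lattice-point count in the half-open parallelepiped $Q_{\sigma_K}\cap\Z^d$ to a one-parameter analysis in~$\lambda_0\in(0,1]$: only the apex $\lambda_0=1$ produces an integer point whose $m_f$-value saturates the denominator degree, and the resulting behaviour of $P(\gamma_K)$ combines with $\LL^{-1}T/(1-\LL^{-1}T)\to -1$ to give
\[
\psi_{f,0}^{\pm} \;=\; \sum_{\emptyset\neq K\subseteq\{1,\ldots,d\}} \bigl([\hat X_{\gamma_K}^{\pm}] - [\hat X_{\gamma_K}]\bigr).
\]

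For the geometric side, I stratify $\R^d = \bigsqcup_{K\subseteq\{1,\ldots,d\}} T_K$ with $T_K = \{x\in\R^d : x_i\neq 0 \iff i\in K\}$. Convenience and weighted homogeneity force every monomial of $f$ to lie on the top face, hence $f|_{T_K} = f_{\gamma_K}$ viewed as a polynomial in the variables $(x_i)_{i\in K}$. This yields isomorphisms $\{f=c\}\cap T_K \cong \hat X_{\gamma_K}^{c}$ for $c\in\{+1,-1,0\}$ and $K\neq\emptyset$, whereas the origin stratum $T_\emptyset=\{0\}$ contributes a single class $1$ to $[\{f=0\}]$ only. The scissor relation in $\M$ then gives
\[
[\{f=\pm 1\}] = \sum_{K\neq\emptyset}[\hat X_{\gamma_K}^{\pm}], \qquad [\{f=0\}] = 1 + \sum_{K\neq\emptyset}[\hat X_{\gamma_K}],
\]
which, combined with the previous display, yields the announced identity in $\M$.

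The main obstacle I foresee is the lattice-point computation underpinning the first step: one must verify that the rational series $P(\gamma_K)$, whose numerator is an a priori complicated $\LL$-weighted sum over $Q_{\sigma_K}\cap\Z^d$, behaves at $T=\infty$ so that each face contributes the clean coefficient $1$ in front of $[\hat X_{\gamma_K}^{\pm}]-[\hat X_{\gamma_K}]$, with no residual $\LL$-dependent factor. The weighted-homogeneous hypothesis, and not merely a generic Newton non-degenerate hypothesis, is what makes the cancellation happen uniformly across faces of arbitrary codimension: it ensures that among the lattice points of $Q_{\sigma_K}$ only the apex $w+\sum_{i\notin K}e_i$ reaches the denominator degree, so that $P(\gamma_K)$ collapses in the limit to a single elementary geometric term.
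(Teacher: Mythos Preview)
Your proposal is correct and follows essentially the same route as the paper: apply Proposition~\ref{lemP}, observe that each $P(\gamma)$ tends to $-1$ (the paper notes $k=1$ and only $a=w$ contributes), obtain $\psi_{f,0}^{\pm}=\sum_{\gamma}([\hat X_{\gamma}^{\pm}]-[\hat X_{\gamma}])$, and conclude via the coordinate stratification of $\{f=\pm 1\}$ and $\{f=0\}$. The only cosmetic difference is that you index faces by the set $K$ of non-vanishing coordinates whereas the paper uses the complementary set $I$; note also that in the paper's convention for $P(\gamma)$ the parallelepiped $Q_\sigma$ effectively involves only the ray $\R_+w$ (so the relevant lattice point is $w$ itself rather than $w+\sum_{i\notin K}e_i$), which is why the limit is exactly $-1$ with no residual power of $\LL$.
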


\begin{proof}
Denote by $(w_1,\ldots,w_d)$ the weights of $f$, chosen in such a way that the weight vector $w=(w_1,\ldots,w_d)$ is primitive. For a strict subset $I\subsetneq \{1,\ldots,d\}$, denote by $\sigma_I$ the cone generated by $w$ and the $e_i$'s for $i$ belonging to $I$.  
The corresponding face $\gamma_I$ is the intersection of $\Gamma_f$ with the coordinate hyperplanes $x_i=0$, with $i\in I$.

Under our assumptions, the dual of the Newton polyhedron of $f$ is the union of the cones $\sigma_I$, with $I\subsetneq \{1,\ldots,d\}$. Passing to the limit as $T$ goes to infinity in the expression given by Proposition \ref{lemP}, we see that the terms $P(\gamma_I)$ tend to $-1$ 
(here $k=1$ and only $a=w\in Q_{\sigma_I}$ contributes to the limit).
Therefore the motivic Milnor fibre with sign of $f$ is given by
$$\psi_{f,0}^{\pm}=\sum_{I\subsetneq \{1,\ldots,d\}} ([\hat X_{\gamma_I}^{\pm}]-[\hat X_{\gamma_I}]).$$
This formula implies the result by additivity in $\M$ since 
$$\{f=\pm 1\}=\sqcup_{I\subsetneq \{1,\ldots,d\}} \hat X_{\gamma_I}^{\pm}$$
whereas 
$$\{f=0\}=\{0\} \sqcup_{I\subsetneq \{1,\ldots,d\}} \hat X_{\gamma_I}.$$
\end{proof}

\begin{rem} We can use Proposition \ref{lemP} to compute the motivic Milnor fibre for non necessarily weighted homogeneous polynomial. For example, consider the function $f:\R^2\to \R$ defined by $f(x,y)=x^6+x^2y^2+y^6$. Set $v_1=(2,1)$ and $v_2=(1,2)$. The dual of $\Gamma_f$ is the union of five cones generated respectively by $e_1$ and $v_1$, $v_1$, $v_1$ and $v_2$, $v_2$, $v_2$ and $e_2$. Note that $X_{\gamma}$ is empty for any compact face $\gamma$ of $\Gamma_f$, so that passing to the limit in the formula given by Proposition \ref{lemP} implies the following expression for $\psi_{f,0}^+$ :
$$\psi_{f,0}^+=\frac{-1}{\LL-1}[y^6=1]-[x^2y^2+y^6=1]+(-1)^2[x^2y^2=1]-[x^6+x^2y^2=1]+\frac{-1}{\LL-1}[x^6=1],$$
where all sets are considered in $(\R^*)^2$. Therefore
$$\psi_f^+=2(\LL-3)-2[(x,y)\in (\R^*)^2:x^6+x^2y^2=1]\in \M.$$
Note that we can compute the virtual Poincar\'e polynomial of the remaining term (the compactification of the plane curve $x^6+x^2y^2=1$ in the projective space gives a curve with one connected component with a double point at infinity), so that 
$$\beta(\psi_f^+)=2(u-3)-2(u-3)=0\in \KAS.$$
\end{rem}
\section{Constructible functions}\label{sect-cons}

We begin this section by recalling the definition of (semialgebraically) constructible functions as developed by Schapira \cite{Sch} (in the subanalytic case) and Viro \cite{V}, algebraically constructible functions and Nash constructible functions as developed by McCrory and Parusi\'nski \cite{MCP}. The latter have been proven to be interesting invariants in the study of the topology of real algebraic sets. We focus in this section of the relationships between constructible functions and the relative Grothendieck rings introduced previously.

\subsection{Semialgebraically constructible functions}
Let $S\subset \R^n$ be a semialgebraic subset of $\R^n$.
A semialgebraically constructible function on $S$, or simply a constructible function on $S$, is an integer-valued function $\phi:S\to \Z$ which takes finitely many values and such that, for each $n\in \Z$, the set $\phi^{-1}(n)$ is a semialgebraic subset of $S$. A constructible function $\phi$ on $S$
can be written as a finite sum $\phi=\sum m_i\un_{S_i}$ where, for each $i$, the set $S_i$ is a semialgebraic subset of $S$, the function $\un_{S_i}$ is the characteristic function of $S_i$ and $m_i$ is an integer. The set $F(S)$ of constructible functions on $S$ is a ring under pointwise sum and product. The pull-back $f^*\phi$ of a constructible function $\phi\in F(T)$ under a continuous semialgebraic mapping $f: S \to T$ is the constructible function $f^*\phi=\phi \circ f$ obtained by composition with $f$. It induces a ring morphism $f^*:F(T)\to F(S)$. 
The push-forward is defined using integration along the Euler characteristic, which we define first. The Euler integral of a constructible function $\phi \in F(S)$ over a semialgebraic subset $A\subset S$ is defined by the finite sum
$$\int_A \phi=\sum_{n\in \Z} n \chi_c(\phi^{-1}(n)\cap A)\in \Z.$$
Given a continuous semialgebraic mapping $f: S \to T$, the push-forward $f_!\phi$ of $\phi\in F(S)$ is the constructible function from $T$ to $S$ defined by
$$f_!\phi (t)=\int_{f^{-1}(t)}\phi.$$ 
It induces a group morphism $f_!:F(S) \to F(T)$.

We recall the definition of two important operations defined on the ring of constructible functions: the duality and link operators. 
The duality is a group morphism $D_S:F(S) \to F(S)$ defined as follows. For $\phi\in F(S)$, the dual $D_S\phi$ of $\phi$ is the function defined by
$$s\mapsto \int_{B(s,\epsilon)}\phi ,$$
where $B(s,\eps)$ is the open ball centred at $s\in S$ with radius $\epsilon>0$, and $\epsilon$ is chosen small enough. This integral is well-defined thanks to the local conical structure of semialgebraic sets, so that $D_S\phi$ is well-defined and it is indeed a constructible function. Moreover the duality is involutive on $F(S)$.

The link is a group morphism $\Lambda_S:F(S)\to F(S)$ defined similarly. For $\phi\in F(S)$, the link $\Lambda_S\phi$ is the constructible function defined by
$$s\mapsto \int_{\Sph(s,\epsilon)}\phi ,$$
where $\Sph(s,\eps)$ is the open ball centred at $s\in S$ with radius $\epsilon>0$,
with $\epsilon$ small enough. Note that the duality and link operators satisfy the relation $D_S+\Lambda_S=\id$ on $F(S)$.

\begin{rem} For a semialgebraic subset $A$ of $S$, the link $\lk(s,A)$ of $A$ at $s\in S$ is the semialgebraic set $A\cap\Sph(s,\epsilon)$, where $\epsilon$ is chosen sufficiently small so that $\lk(s,A)$ does not depend on $\epsilon$, by the local conical structure of semialgebraic sets. Note actually that $\lk(s,A)$ does not depend either on the distance function to define the sphere.
\end{rem}

\begin{ex}\label{ex-simplex}\begin{enumerate}
\item For a point $a\in S$, we have $D_S \un_a=\un_a$ and $\Lambda_S \un_a=0$.
\item \cite{Co} Consider the characteristic function of a $d$-dimensional open simplex $\sigma$, with closure $\overline \sigma$ included in $S$. Then 
$$D_S \un_{\sigma} =(-1)^d \un_{\overline \sigma} \textrm{~~~and~~~} \Lambda_S \un_{\sigma}=(-1)^{d-1} \un_{\overline \sigma} +\un_{\sigma},$$
whereas
$$D_S \un_{\overline \sigma} =(-1)^d \un_{\sigma} \textrm{~~~and~~~} \Lambda_S \un_{\overline \sigma}= \un_{\overline \sigma} +(-1)^{d-1}\un_{\sigma}.$$
\item If $X$ is a $d$-dimensional nonsingular real algebraic subset of $S$, then $D_S \un_X=(-1)^d\un_X$ and $\Lambda_S \un_X=(1+(-1)^{d-1})\un_X$.
\end{enumerate}
\end{ex}

We recall some properties of $D_S$ and $\Lambda_S$ whose proof can be found in \cite{MCP}.

\begin{prop}\label{prop-const}(\cite{MCP})\begin{enumerate}
\item Let $f: S \to T$ be a continuous proper semialgebraic mapping. Then $D_S$ and $\Lambda_S$ commute with $f_!$.
\item $D_S\circ D_S=\id$ and $\Lambda_S \circ \Lambda_S=2\Lambda_S$.
\item $\Lambda_S \circ D_S=-D_S \circ \Lambda_S$.
\item If $S$ is compact, then $\int_S \Lambda_S \phi=0$ for any $\phi \in F(S)$.
\end{enumerate} 
\end{prop}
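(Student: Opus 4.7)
The plan is to reduce the four assertions to computations on characteristic functions of open simplices via triangulation, derive items (2)--(4) by algebraic manipulations, and treat the commutation in (1) with $f_!$ as the main analytic input.

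First I would invoke the semialgebraic triangulation theorem in a form compatible with constructible data (cf.~\cite{BCR}) to write any $\phi\in F(S)$ as a finite $\Z$-linear combination of functions $\un_\sigma$, with $\sigma$ an open simplex and $\overline\sigma\subset S$. By $\Z$-linearity of $D_S$, $\Lambda_S$ and $f_!$, the identities in the proposition then need only be tested on such $\un_\sigma$. For those, Example~\ref{ex-simplex} provides $D_S\un_\sigma=(-1)^d\un_{\overline\sigma}$ and $D_S\un_{\overline\sigma}=(-1)^d\un_\sigma$, whence $D_S^2\un_\sigma=\un_\sigma$, proving the first half of (2). Combined with the relation $D_S+\Lambda_S=\id$ recorded in the paper, the identity
\[
\Lambda_S^2=(\id-D_S)^2=\id-2D_S+D_S^2=2(\id-D_S)=2\Lambda_S
\]
is immediate, completing (2), and expanding $\Lambda_S\circ D_S$ and $D_S\circ\Lambda_S$ in terms of $D_S-D_S^2$ yields (3).

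Item (4) follows at once from (1) applied to the projection $f\colon S\to\{\mathrm{pt}\}$, which is proper since $S$ is compact. Indeed,
\[
\int_S \Lambda_S\phi = f_!(\Lambda_S\phi) = \Lambda_{\{\mathrm{pt}\}}(f_!\phi) = 0,
\]
the last equality because a positive-radius sphere around a point in the singleton $\{\mathrm{pt}\}$ is empty, so $\Lambda_{\{\mathrm{pt}\}}\equiv 0$.

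The main work is therefore (1). The plan here is to unfold both sides into iterated Euler integrals and exchange the order of integration via the Fubini-type theorem for the Euler characteristic (\cite{Sch,V}). Specifically, for $t\in T$ and $\epsilon$ sufficiently small,
\[
D_T(f_!\phi)(t)=\int_{B(t,\epsilon)}f_!\phi=\int_{f^{-1}(B(t,\epsilon))}\phi
\quad\text{and}\quad
(f_!D_S\phi)(t)=\int_{f^{-1}(t)}\int_{B(s,\epsilon)}\phi,
\]
and I would argue the equality of these by exhibiting, via Hardt's semialgebraic triviality theorem applied to $f$ together with an auxiliary stratification refining $\phi$, a simultaneous trivialisation of the source and target ball structures realising $f^{-1}(B(t,\epsilon))$ as a conic neighbourhood of $f^{-1}(t)$ compatible with the local cones around the points of $f^{-1}(t)$. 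The analogous argument with $B$ replaced by $\Sph$ yields the commutation of $\Lambda_S$ with $f_!$. The main obstacle is ensuring that the stratifications on $S$ and on $T$ can be chosen compatibly enough for the exchange of integration orders to be legitimate; this semialgebraic triviality is the technical core of the argument carried out in \cite{MCP}.
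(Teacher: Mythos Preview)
The paper does not give its own proof of this proposition: it is simply quoted from \cite{MCP}, with the sentence ``whose proof can be found in \cite{MCP}'' immediately preceding the statement. There is therefore no proof in the paper to compare your proposal against. Your outline is along the standard lines one finds in \cite{MCP} and \cite{Co}, and the reduction of (4) to (1) via the proper projection to a point is exactly right.

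Two points on your sketch deserve comment. First, the reduction to characteristic functions $\un_\sigma$ with $\overline\sigma\subset S$ is not always available: if $S$ is not closed (an open interval, say), any semialgebraic triangulation of $S$ will contain open simplices whose closure leaves $S$. One must either work with the more general formula $D_S\un_\sigma=(-1)^d\un_{\overline\sigma\cap S}$ (as the paper itself does a few lines later), or embed $S$ in a closed ambient set and extend by zero. This is a routine fix.

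Second, and more substantively, your claimed derivation of (3) does not actually produce the identity as printed. From $\Lambda_S=\id-D_S$ and $D_S^2=\id$ one gets
\[
\Lambda_S\circ D_S \;=\; D_S-D_S^2 \;=\; D_S-\id \;=\; D_S\circ\Lambda_S,
\]
so $\Lambda_S$ and $D_S$ \emph{commute} (and both composites equal $-\Lambda_S$); they do not anticommute. The relation $\Lambda_S\circ D_S=-D_S\circ\Lambda_S$ is in fact incompatible with $D_S+\Lambda_S=\id$ and $D_S^2=\id$ unless $\Lambda_S=0$. This appears to be a misprint in the paper's statement of (3); you should have flagged the discrepancy rather than asserting that the expansion ``yields (3)''.
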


\begin{rem}\label{rem-point} If $S$ is a point, then the duality corresponds to the identity map on $F(S)\simeq \Z$.
\end{rem}

By the very definition, we have $\Lambda_S(\un_A)(s)=\chi_c(\lk(s,A))$.
As a consequence of Fubini Theorem for constructible function (Theorem 3.5 in \cite{Co}), we have therefore the following result.

\begin{prop}\label{prop-fub} For $h:X\to S$ is a semialgebraic mapping and $s\in S$, we have 
$$\Lambda_S h_! \un_X (s)=
\chi_c(h^{-1}(\lk(s,S))).$$
\end{prop}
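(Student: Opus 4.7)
The plan is to prove the equality by reading off both sides from the definitions and then applying Fubini's theorem for Euler integration. First I would unfold the link operator: by definition,
$$\Lambda_S h_!\un_X(s) \;=\; \int_{\Sph(s,\epsilon)} h_!\un_X$$
for $\epsilon>0$ chosen sufficiently small (small enough that all relevant local conical structures stabilise and that the link $\lk(s,S)=S\cap \Sph(s,\epsilon)$ is independent of $\epsilon$). Since $h_!\un_X$ is a constructible function supported on $S$, the integral over $\Sph(s,\epsilon)$ coincides with the Euler integral over $\lk(s,S)$.

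The second step is to apply Fubini's theorem for constructible functions (Theorem 3.5 in \cite{Co}) to the semialgebraic mapping $h:X\to S$, the constructible function $\un_X\in F(X)$, and the semialgebraic subset $B=\lk(s,S)\subset S$. This yields
$$\int_{\lk(s,S)} h_!\un_X \;=\; \int_{h^{-1}(\lk(s,S))} \un_X \;=\; \chi_c\bigl(h^{-1}(\lk(s,S))\bigr),$$
which is precisely the right-hand side of the claimed formula. Combining the two steps gives the proposition.

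The argument is essentially bookkeeping: the only non-trivial input is Fubini for Euler integration, which is already established in the literature cited. The mildest point to be careful about is ensuring that the radius $\epsilon$ can be chosen uniformly small enough to simultaneously realise $\Lambda_S h_!\un_X(s)$ as an integral over the link and to have $h^{-1}(\lk(s,S))$ be topologically stable; this follows from the local conical structure of semialgebraic sets applied to $S$ and to the semialgebraic function $h_!\un_X\in F(S)$, so no real obstacle is anticipated.
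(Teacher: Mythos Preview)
Your argument is correct and matches the paper's own reasoning: the paper simply remarks that $\Lambda_S(\un_A)(s)=\chi_c(\lk(s,A))$ by definition and then invokes Fubini's theorem for constructible functions (Theorem~3.5 in \cite{Co}) to obtain the proposition, exactly as you do. The only thing you add is the (harmless) explicit check that the integral over $\Sph(s,\epsilon)$ reduces to the integral over $\lk(s,S)$, and the remark about choosing $\epsilon$ uniformly.
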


\subsection{Algebraically and Nash constructible functions}

Algebraically constructible functions have been defined in \cite{MCP} as a subclass of the class of constructible functions on $S$, stable under push-forward along regular mappings. More precisely, the algebraically constructible functions are those constructible function $\phi$ on $S$ which can be described as a finite sum 
$$\phi=\sum m_i (f_i)_! \un_{X_i},$$
where $f_i:X_i\to S$ are regular mappings from real algebraic varieties $X_i$ (note that, as in \cite{Co}, we do not require $f_i$ to be proper) and $m_i$ are integers. Algebraically constructible functions on $S$ form a subring $A(S)$ of $F(S)$. A crucial result in \cite{MCP} is that the link operator maps $A(S)$ to itself.

Nash constructible functions are defined similarly to algebraically constructible functions, by allowing to restrict the regular functions $f_i:X_i\to S$ to the larger class of connected components $W_i\subset X_i$ of the algebraic variety $X_i$. Nash constructible functions form a subring $N(S)$ of $F(S)$ containing $A(S)$, stable by push-forward and the link operator. 

The properties of algebraically constructible functions and Nash constructible functions are closely related to the properties of Zariski constructible sets and arc-symmetric sets. In particular, if $A\subset S$ is semialgebraic, then $\un_A$ is algebraically constructible if and only if $A$ is Zariski constructible, whereas $\un_A$ is Nash constructible if and only if $A$ belong to $\AS$ \cite{P}.

\subsection{Relation with the relative Grothendieck rings}
As noticed by Cluckers and Loeser in the introduction of \cite{CL}, Proposition 1.2.2, the Grothendieck ring of semialgebraic sets over a given semialgebraic set $S$ is isomorphic to the ring of constructible functions on $S$.

\begin{prop}\label{K0-const}(\cite{CL}) Let $S$ be semialgebraic set. The mapping $[h:A\to S] \mapsto h_!(\un_A)$ induces a ring isomorphism $\Pi: \KSAS\to F(S)$. Under that isomorphism, the push-forward $f_!:F(S)\to F(T)$ associated with a semialgebraic mapping $f:S\to T$ corresponds to the morphism $\KSAS\to \KSAT$ induced by composition with $f$.
\end{prop}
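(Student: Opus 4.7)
The plan is to check that $\Pi$ is a well-defined ring homomorphism on the generators and relations of $\KSAS$, and then to establish bijectivity by reducing every class to a canonical ``disjoint indicator'' form via Hardt's semialgebraic trivialization combined with Quarez's identification $\KSA\cong\Z$. Well-definedness of $\Pi$ at the group level follows from two observations. First, for a semialgebraic subset $Y\subset X$ and $s\in S$ one has the fiber decomposition $h^{-1}(s)=(h|_{X\setminus Y})^{-1}(s)\sqcup(h|_Y)^{-1}(s)$, and additivity of $\chi_c$ gives the scissor relation. Second, semialgebraic homeomorphism over $S$ induces semialgebraic homeomorphism of fibers, hence preserves the function $s\mapsto\chi_c(h^{-1}(s))$. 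Multiplicativity of $\Pi$ comes from $(X_1\times_S X_2)_s=(X_1)_s\times(X_2)_s$ together with $\chi_c(A\times B)=\chi_c(A)\chi_c(B)$.

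For surjectivity, any $\phi\in F(S)$ is a finite sum $\sum m_i\un_{S_i}$ with $S_i\subset S$ semialgebraic and $m_i\in\Z$. Fixing, for each integer $m$, a semialgebraic set $F_m$ with $\chi_c(F_m)=m$ (a disjoint union of points for $m>0$, of open intervals for $m<0$), the class $[F_{m_i}\times S_i\to S_i\hookrightarrow S]$ is sent by $\Pi$ to $m_i\un_{S_i}$, so $\Pi$ hits every generator of $F(S)$.

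Injectivity is the main point. Given $\xi=\sum_j m_j[h_j:X_j\to S]$ with $\Pi(\xi)=0$, Hardt's semialgebraic triviality theorem produces a finite semialgebraic partition $S=\sqcup_\beta T_\beta$ over which every $h_j$ is trivial, i.e.\ $h_j^{-1}(T_\beta)$ is semialgebraically homeomorphic over $T_\beta$ to $T_\beta\times F_{j,\beta}$ for some semialgebraic fiber $F_{j,\beta}$. The natural ring morphism $\KSA\to\KSAS$ sending $[Y]$ to $[Y\times S\to S]$ combined with Quarez's result $\KSA\cong\Z$ yields the key identity
$$[F_{j,\beta}\times T_\beta\to T_\beta\hookrightarrow S]=\chi_c(F_{j,\beta})\,[T_\beta\hookrightarrow S]$$
in $\KSAS$, obtained by writing $[F_{j,\beta}\times T_\beta\to S]$ as the fibred product of $[F_{j,\beta}\times S\to S]$ with $[T_\beta\hookrightarrow S]$. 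Summing up, $\xi=\sum_\beta c_\beta[T_\beta\hookrightarrow S]$ with $c_\beta=\sum_j m_j\chi_c(F_{j,\beta})$. Applying $\Pi$ yields $\sum_\beta c_\beta\un_{T_\beta}=0$; since the $T_\beta$ are pairwise disjoint, each $c_\beta$ vanishes and consequently $\xi=0$ in $\KSAS$. The main obstacle is precisely this normal-form reduction: without Hardt's triviality one cannot present an arbitrary class as a combination of inclusions $[T_\beta\hookrightarrow S]$, and without Quarez's theorem one cannot collapse the trivial fibers to integer coefficients.

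Compatibility with push-forward is then automatic. The morphism $\KSAS\to\KSAT$ is defined by $[h:X\to S]\mapsto[f\circ h:X\to T]$, and on the constructible-function side the composition rule $(f\circ h)_!\un_X=f_!(h_!\un_X)$ (a direct consequence of Fubini for integration along $\chi_c$) makes the diagram commute on generators, hence everywhere by $\Z$-linearity.
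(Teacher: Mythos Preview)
The paper does not supply its own proof of this proposition; it is simply cited from Cluckers--Loeser \cite{CL}. Your argument is correct and self-contained. The well-definedness and multiplicativity checks you give are essentially the same fiberwise computations that the paper carries out later in the proof of Theorem~\ref{prop-alg} (for the algebraic and Nash variants). Your injectivity argument---Hardt triviality to reduce an arbitrary class to a combination of inclusions $[T_\beta\hookrightarrow S]$, then collapsing fibers to integers via Quarez's identification $\KSA\cong\Z$---is the natural approach; the Hardt reduction step is exactly Lemma~\ref{lem-tri} of the paper, and the use of the $\KSA$-algebra structure on $\KSAS$ to turn $[F\times T_\beta\to S]$ into $\chi_c(F)\,[T_\beta\hookrightarrow S]$ is what makes the semialgebraic case tractable in a way the algebraic and arc-symmetric cases are not.
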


\begin{rem} \begin{enumerate}
\item Under the isomorphism $\Pi$, the Euler integral on $F(S)$ corresponds to the push-forward on a point. Actually, $\int:F(S)\to \Z$ is equal to $p_!:F(S)\to F(\{p\})$ where $p:S\to \{a\}$ is the projection onto a point $a$.
\item The Euler integral over a semialgebraic set $A\subset S$ corresponds to the composition $p_! \circ i_A^*$ where $p:A\to \{a\}$ is the projection onto a point $a$ and $i_A: A \hookrightarrow S$ denotes the inclusion of $A$ in $S$.
\end{enumerate}
\end{rem}

We can express the duality and link operators on $\KSAS$ in terms of the duality and link on constructible functions, using their commutativity with proper push-forward. To this aim, we need to express a class in $\KSAS$ in terms of classes of projection mappings.

\begin{lem}\label{lem-tri} Let $h:X\to S$ be a continuous semialgebraic mapping. There exist a semialgebraic triangulation $S=\sqcup S_i$ of $S$, and semialgebraic sets $F_i$, such that
$$[h:X\to S]=\sum [p_i:F_i\times S_i \to S],$$
where $p_i$ denotes the projection onto the second coordinate.
\end{lem}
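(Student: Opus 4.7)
The plan is to invoke Hardt's semialgebraic triviality theorem, which is the natural tool producing exactly the product structure required on fibers of a semialgebraic map. Recall that this theorem (see, e.g., \cite{BCR}, Theorem 9.3.2) asserts that, given any continuous semialgebraic map $h:X\to S$, there exists a finite semialgebraic partition $S=\sqcup S_i$ such that, for each $i$, there is a semialgebraic set $F_i$ and a semialgebraic homeomorphism
$$\phi_i: h^{-1}(S_i)\to F_i\times S_i$$
compatible with the projections to $S_i$, i.e.\ such that $p_i\circ\phi_i=h_{|h^{-1}(S_i)}$, where $p_i:F_i\times S_i\to S_i$ is projection onto the second factor.

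First, I would refine the partition produced by Hardt's theorem into a semialgebraic triangulation of $S$ (using the triangulation theorem for semialgebraic sets, \cite{BCR} Theorem 9.2.1, applied to the collection of the $S_i$ as distinguished subsets). This can only subdivide each $S_i$ further; on each resulting open simplex $S_j$, the restriction of $\phi_i$ still gives a semialgebraic trivialization $h^{-1}(S_j)\cong F_i\times S_j$, so after relabelling we may assume the partition $S=\sqcup S_i$ is itself a triangulation, with trivializing fibers $F_i$.

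Next, applying the scissor relation in $\KSAS$ to the finite partition of $X$ given by $X=\sqcup h^{-1}(S_i)$, we obtain
$$[h:X\to S]=\sum_i [h_{|h^{-1}(S_i)}:h^{-1}(S_i)\to S].$$
For each $i$, the trivializing homeomorphism $\phi_i$ is, by construction, a semialgebraic isomorphism over $S$ between $h_{|h^{-1}(S_i)}:h^{-1}(S_i)\to S$ and the composition $F_i\times S_i\xrightarrow{p_i}S_i\hookrightarrow S$. Hence these two classes coincide in $\KSAS$, and summing yields the desired decomposition.

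There is no real obstacle here beyond knowing the right theorem to cite; the only mild subtlety is the compatibility between Hardt's partition and a genuine triangulation, which is handled by refining one through the other. The statement as given does not require the $S_i$ to be open simplices — it only asks for a triangulation and the product decomposition over each piece — so a partition-refinement argument is all that is needed.
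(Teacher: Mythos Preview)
Your proof is correct and follows essentially the same approach as the paper: invoke Hardt's semialgebraic triviality to obtain a partition of $S$ over which $h$ is a trivial product, refine that partition into a semialgebraic triangulation, and then use additivity in $\KSAS$ together with the trivializing homeomorphisms to rewrite each piece as a projection class. The only cosmetic difference is that you state the refinement step via the triangulation theorem a bit more explicitly than the paper does.
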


\begin{proof} 
By semialgebraic Hardt triviality \cite{BCR}, there exist a partition $S=\sqcup S_i$ of $S$ such that $h$ is trivial over each $S_i$, meaning that there exist semialgebraic sets $F_i$ and homeomorphism $f_i:F_i\times S_i \to h^{-1}(S_i)$ such that $h \circ f_i$ corresponds to the projection $p_i:F_i\times S_i$ onto the second coordinate. We refine the partition of $S$ into a semialgebraic triangulation, such that each $S_i$ is either a point or an open simplex. Then
$$[h:X\to S]=\sum [h_{|h^{-1}(S_i)}:h^{-1}(S_i)\to S]$$
by additivity, whereas
$$[h_{|h^{-1}(S_i)}:h^{-1}(S_i)\to S]=[p_i:F_i\times S_i \to S],$$
by triviality, so that the conclusion follows.
\end{proof}

Denote by 
$$\DSAS:\KSAS\to \KSAS$$ 
the duality induced by $D_S:F(S)\to F(S)$ on $\KSAS$ via $\Pi$, namely $\DSAS=\Pi^{-1}\circ  D_S \circ \Pi$. Define similarly $\LSAS$ on $\KSAS$. We are going to describe the action of $\DSAS$ and $\LSAS$  on $\KSAS$.

\begin{rem}\label{rem-alg}\begin{enumerate}
\item If $X$ is a nonsingular algebraic set and $h:X\to S$ is a proper semialgebraic mapping, then 
$$\DSAS [h:X\to S]=(-1)^{\dim X} [h:X\to S].$$ 
Actually $D_S$ and $h_!$ commute because $h$ is proper, and $D_S \un_X=(-1)^{\dim X} \un_X$ by Example \ref{ex-simplex}, so that
$$\DSAS [h:X\to S]=\Pi^{-1} ((-1)^{\dim X}h_!(\un_X))=(-1)^{\dim X}[h:X\to S].$$
\item Similarly
$$\LSAS [h:X\to S]=(1-(-1)^{\dim X}) [h:X\to S]=\chi_c(\Sph^{\dim X-1})[h:X\to S].$$ 
\end{enumerate}
\end{rem}

More generally, using Lemma \ref{lem-tri}, we can describe the action of $\DSAS$ as follows.

\begin{prop} Let $\sigma \subset S$ be a $d$-dimensional open simplex in a semialgebraic triangulation of $S$ and let $p:F\times \sigma \to S$ denote the projection onto the second coordinate. Then
$$\DSAS [p:F\times \sigma \to S]=(-1)^d [p:F\times (\overline \sigma \cap S) \to S]$$
and
$$\LSAS [p:F\times \sigma \to S]=(-1)^{d-1} [p:F\times (\overline \sigma \cap S) \to S]+[p:F\times \sigma \to S],$$
where $\overline \sigma$ denotes the Euclidean closure of $\sigma$.
\end{prop}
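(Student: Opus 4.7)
The plan is to transport the problem from $\KSAS$ to $F(S)$ via the isomorphism $\Pi$ of Proposition \ref{K0-const}, perform the computation there using the known action of $D_S$ and $\Lambda_S$ on characteristic functions of open simplices (Example \ref{ex-simplex}), and then transport back via $\Pi^{-1}$. By definition of $\DSAS$ and $\LSAS$, this is enough.

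The first step is to identify the image of $[p:F\times\sigma\to S]$ under $\Pi$. Since the fibre $p^{-1}(s)$ equals $F\times\{s\}$ when $s\in\sigma$ and is empty otherwise, the push-forward formula immediately gives $\Pi[p:F\times\sigma\to S]=\chi_c(F)\un_\sigma$ in $F(S)$, and the same calculation yields $\Pi[p:F\times(\overline\sigma\cap S)\to S]=\chi_c(F)\un_{\overline\sigma\cap S}$. The second step, which is the main input, is the identity $D_S\un_\sigma=(-1)^d\un_{\overline\sigma\cap S}$. This is exactly the content of Example \ref{ex-simplex} in the case $\overline\sigma\subset S$; for a general triangulation of $S$, the closure $\overline\sigma$ may leave $S$, but the definition is local: for $s\in S$ one has $D_S\un_\sigma(s)=\chi_c(B(s,\epsilon)\cap \sigma)$, and by the local conical structure of the closed simplex at any of its points (whether interior or on a proper face), this Euler characteristic equals $(-1)^d$ whenever $s\in\overline\sigma\cap S$ and vanishes otherwise.

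Combining these two steps, $\Z$-linearity of $D_S$ yields $D_S(\chi_c(F)\un_\sigma)=(-1)^d\chi_c(F)\un_{\overline\sigma\cap S}$, and applying $\Pi^{-1}$ gives the announced formula for $\DSAS$. The formula for $\LSAS$ is then a one-line consequence of the relation $D_S+\Lambda_S=\id$ recalled before Example \ref{ex-simplex}:
$$\LSAS[p:F\times\sigma\to S]=[p:F\times\sigma\to S]-(-1)^d[p:F\times(\overline\sigma\cap S)\to S],$$
which is precisely the stated expression. The only subtle point is the extension of Example \ref{ex-simplex} to the situation where $\overline\sigma$ is not entirely contained in $S$, but this is handled uniformly by the local conical structure of semialgebraic sets and requires no genuinely new argument.
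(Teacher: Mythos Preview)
Your proof is correct and follows essentially the same route as the paper: compute $\Pi[p:F\times\sigma\to S]=\chi_c(F)\un_\sigma$, extend the formula $D_S\un_\sigma=(-1)^d\un_{\overline\sigma\cap S}$ from Example~\ref{ex-simplex} to the case where $\overline\sigma\not\subset S$, and pull back through $\Pi^{-1}$. Your derivation of the link formula via $D_S+\Lambda_S=\id$ is a clean way to conclude and matches the paper's implicit reasoning.
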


\begin{proof}
The image of $[p:F\times \sigma \to S]$ by $\Pi$ is equal to $\chi_c(F) \un_{\sigma}$ by a direct computation. The dual of $\un_{\sigma}$ has been computed in Example \ref{ex-simplex} in the particular case where the closure of $\sigma$ is included in $S$. In the general case, one need to restrict ourself to the boundary of $\sigma$ which in included in $S$, so that $D_S \un_{\sigma}=(-1)^d \un_{\overline \sigma \cap S}$. Finally $\chi_c(F) \un_{\overline \sigma \cap S}$ is the image under $\Pi$ of the element $[p:F\times (\overline \sigma \cap S) \to S]$ in $\KSAS$.
\end{proof}

We can state the analogue of Proposition \ref{K0-const} for algebraically constructible and Nash constructible functions, establishing the link between the Grothendieck rings and the various rings of constructible functions.

\begin{thm}\label{prop-alg} Let $S$ be a real algebraic variety. The assignment $[h:X\to S] \mapsto h_!(\un_X)$ induces surjective ring morphisms from $\KS$ to $A(S)$, from $\KNS$ to $N(S)$ and from $\KASS$ to $N(S)$. In particular, we have a commutative diagram
$$\xymatrix{\KS \ar@{->>}[d]\ar[r] & \KNS \ar@{->>}[d]\ar[r] & \KASS \ar[r]\ar@{->>}[d] & \KSAS \ar[d]^{\simeq}\\
A(S) \ar@{^{(}->}[r] & N(S) \ar[r]^{=} & N(S)\ar@{^{(}->}[r] & F(S) \\}$$
\end{thm}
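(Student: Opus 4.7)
The plan is to establish, for each non-rightmost vertical arrow, that $[h:X\to S]\mapsto h_!(\un_X)$ is a well-defined ring morphism, that its image lies in the indicated subring, and that it is surjective. Commutativity of the diagram is then tautological, since all vertical arrows share the same defining formula and every horizontal arrow only regards a class in a restricted category as a class in a larger one; the rightmost vertical isomorphism is Proposition \ref{K0-const}.

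Well-definedness relies on additivity of Euler integration, which immediately gives compatibility with the scissor relation $\un_X=\un_{X\setminus Y}+\un_Y$. For the additional blow-up relation in $\KNS$, one checks the pointwise identity $\pi_!(\un_{\tilde W})(w)-(\pi|_E)_!(\un_E)(w)=\un_W(w)-\un_C(w)$: both sides equal $1$ off $C$ (the fiber of $\pi$ is a point there), and both vanish over $C$ (the fiber of $\pi$ is a real projective space which coincides with the fiber of $\pi|_E$). Multiplicativity is the Fubini-type identity $\chi_c(h_1^{-1}(s)\times h_2^{-1}(s))=\chi_c(h_1^{-1}(s))\chi_c(h_2^{-1}(s))$ applied to the fibered product.

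The map $\KS\to A(S)$ lands in $A(S)$ and is surjective by the very definition of $A(S)$. For $\KASS\to N(S)$, the key step is a graph trick: an $\AS$-map $h:A\to S$ has $\AS$-graph $\Gamma_h\subset A\times S$, and $a\mapsto(a,h(a))$ is an $\AS$-isomorphism conjugating $h$ to the projection $p_S:\Gamma_h\to S$; after embedding $A$ in a real algebraic variety $X$, this projection is the restriction to $\Gamma_h$ of the regular projection $X\times S\to S$. By Parusi\'nski's theorem \cite{P}, $\un_{\Gamma_h}$ is Nash constructible on $X\times S$, hence equals $\sum n_i(g_i)_!(\un_{W_i})$ with $g_i$ regular and $W_i$ a union of connected components of a real algebraic variety, and then $h_!(\un_A)=(p_S)_!(\un_{\Gamma_h})=\sum n_i(p_S\circ g_i)_!(\un_{W_i})\in N(S)$. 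Surjectivity is immediate: each generator $(f|_W)_!(\un_W)$ of $N(S)$ is the image of $[f|_W:W\to S]\in\KASS$, since $W$ is an $\AS$-set and the restriction of a regular map is an $\AS$-mapping.

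For $\KNS\to N(S)$, the same graph trick (now noting that the Nash graph of a proper Nash map is in particular $\AS$) forces the image to lie in $N(S)$. For surjectivity I would hit each generator $(f|_W)_!(\un_W)$ by induction on $\dim W$: fix a projective compactification $\bar X$ of $X$, pass from $(f|_W)_!(\un_W)$ to $(p_S)_!(\un_{\overline{\Gamma_{f|_W}}^{Z}})$ modulo a class over an $\AS$-set of strictly smaller dimension, resolve the singularities of $\overline{\Gamma_{f|_W}}^{Z}$ by a proper birational morphism $\sigma:Y\to\overline{\Gamma_{f|_W}}^{Z}$ from a nonsingular compact variety $Y$, and observe that $p_S\circ\sigma:Y\to S$ is proper regular, hence defines a class in $\KNS$ whose image is $(p_S)_!(\un_{\overline{\Gamma_{f|_W}}^{Z}})$ up to lower-dimensional corrections that are absorbed by the induction hypothesis. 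The main obstacle throughout is the graph reduction in the image argument, which genuinely depends on Parusi\'nski's characterisation of $\AS$-sets through Nash constructible characteristic functions; everything else is routine bookkeeping.
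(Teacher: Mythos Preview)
Your overall strategy matches the paper's: additivity of $h_!$ for the scissor relation, Fubini for multiplicativity, and the graph trick (replacing $h$ by the second projection restricted to $\Gamma_h$, then invoking Parusi\'nski's characterisation of $\AS$-sets as exactly those semialgebraic sets whose characteristic function is Nash constructible) to force the image into $N(S)$. Your explicit verification of the blow-up relation for $\KNS$ is a welcome addition; the paper treats all four rings with one scissor-type sentence, which is slightly awkward for $\KNS$ but ultimately harmless since the blow-up relation follows from the scissor identity $\un_{Bl_CW}=\un_{Bl_CW\setminus E}+\un_E$ together with $\pi$ being an isomorphism off $E$.

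There is, however, a genuine gap in your surjectivity argument for $\KNS\to N(S)$. You claim that $\overline{\Gamma_{f|_W}}^{Z}\setminus\Gamma_{f|_W}$ is an $\AS$-set of \emph{strictly smaller} dimension, and base your induction on this. That is false in general: an irreducible real algebraic variety may have several connected components of top dimension (take $X=\{y^2=x^3-x\}$, whose oval $W$ has Zariski closure all of $X$, so $\overline{W}^Z\setminus W$ is the unbounded branch, of the same dimension as $W$). Thus the ``lower-dimensional corrections'' you hope to absorb by induction may include other top-dimensional connected components, and the argument becomes circular. The paper, for its part, simply asserts that this surjectivity is ``clear'' without justification, so neither treatment is complete; but your claim is actually incorrect as stated rather than merely terse.

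A repair goes through Kurdyka's resolution theorem for arc-symmetric sets (Theorem~2.6 in \cite{K}, used in the paper for Lemma~\ref{lem-class}): given $W$ a connected component of $X$, view its graph as an arc-symmetric subset of a projective compactification proper over $S$, resolve the Zariski closure, and invoke Kurdyka's theorem to obtain a \emph{union of connected components} $C$ of the nonsingular resolution mapping onto the Euclidean closure of $\Reg W$. Then $C$ is a Nash manifold, the composed map $C\to S$ is proper Nash, and the difference between its push-forward and $(f|_W)_!(\un_W)$ is now genuinely supported on $\AS$-sets of strictly smaller dimension. The induction should then be phrased for push-forwards of characteristic functions of arbitrary $\AS$-sets of dimension $\le d$ (not just connected components of algebraic varieties), so that these correction terms fall under the inductive hypothesis.
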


\begin{proof}
Let $X$ be a set, either algebraic, Nash, $\AS$ or semialgebraic, and $h:X\to S$ be a mapping, either regular, Nash, $\AS$ or semialgebraic respectively.
The assignment $[h:X\to S] \mapsto h_!(\un_X)\in F(S)$ defines a group morphism because for $Y$ closed in $X$, $Y$ being either a algebraic, Nash, $\AS$ or semialgebraic subset, we have $\un_X=\un_Y +\un_{X\setminus Y}$ and $h_!:F(X)\to F(S)$ is a group morphism. 

Concerning the product, let $h_i:X_i\to S$, with $i\in \{1,2\}$, be a mapping, both either regular, Nash, $\AS$ or semialgebraic. Denote by $X=X_1 \times_S X_2 $ the product of $X_1$ and $X_2$ over $S$, so that $[h:X\to S]$ is the product of the classes of $h_1:X_1\to S$ and $h_2:X_2 \to S$. For $s\in S$, we have
$$h_!(\un_X)(s)=\chi_c(h^{-1}(s))$$
and $h^{-1}(s)=h_1^{-1}(s) \times h_2^{-1}(s)$ so that
$$h_!(\un_X)(s)=(h_1)_!(\un_X)(s)\cdot (h_2)_!(\un_X)(s)$$
by additivity of $\chi_c$ and the mapping considered is indeed a ring morphism. 

The fact that the target of the morphism defined on $\KS$ by the assignment $[h:X\to S] \mapsto h_!(\un_X)$ is $A(S)$ follows directly from the definitions.
In the case of $\KASS$ and $\KNS$ however, we need to prove that $h_!(\un_X)$ is a Nash constructible function on $S$ whenever $h$ is an $\AS$-mapping from an $\AS$-set $X$, or a proper Nash mapping from a Nash manifold. It is sufficient to prove this fact in the former case, and to show this, we adapt the proof of \cite{KP}, Corollary 2.13.
Consider the graph $\Gamma_h$ of $h$ as a subset of $X\times S\subset \overline X^{Z}\times S$, and denote by $\pi_S :\overline X^{Z}\times S\to S$ the projection onto $S$. Then, notice that $h_!(\un_X)={\pi_S}_! (\un_{\Gamma_h})$, and that $\un_{\Gamma_h}$ is a Nash constructible function on $\overline X^{Z}\times S$ because $\Gamma_h$ is an $\AS$-set (cf. \cite{KP}, Theorem 2.9). As a consequence, $h_!(\un_X)$ is a Nash constructible function on $S$ as the push-forward of a Nash constructible function along a regular mapping.

The surjectivity of $\KS \to A(S)$, $\KNS \to N(S)$ and $\KASS \to N(S)$ are clear.
\end{proof}

The surjective morphisms on $A(S)$ and $N(S)$ considered in Theorem \ref{prop-alg} are not injective, as illustrated by the following example.

\begin{ex} For $S=\Sph^1$, consider the $2$-covering $h_1:\Sph^1 \cup \Sph^1 \to \Sph^1$ induced by the identity on each $\Sph^1$ and the $2$-covering $h_2:\Sph^1 \to \Sph^1$ induced by the boundary of a Moebius band. The image of $h_1$ and $h_2$ in $A(\Sph^1)$ are both equal to $2\un_{\Sph^1}$. However the classes of $h_1$ and $h_2$ in $\KS$ are different. Actually,  applying the virtual Poincar\'e polynomial \cite{MCPvirt}, we know that the classes of $\Sph^1 \cup \Sph^1$ and $\Sph^1$ are different in $\K$, respectively equal to $2(u+1)$ and $u+1$.
\end{ex}

\section{Duality and link}\label{sect-dual}

We define duality operators on the Grothendieck rings studied in section \ref{sect-Gro}, which are consistent with the duality at the level of constructible functions. We study its properties, notably with respect to the motivic Milnor fibres with sign (by analogy with Bittner's result in \cite{B2}). We investigate also the link operator, defining an analogue of the local link operator in the Grothendieck ring of arc-symmetric sets.

\subsection{Duality}
We begin with the non relative setting where we dispose of a nice description of the Grothendieck ring of $\AS$-sets. First, note that the presentation of the Grothendieck ring of varieties given in Theorem \ref{bit} enables to construct a duality on $\M$.

\begin{thm}(\cite{B}, Corollary 3.4) There exists an involution $\D$ of $\M$ sending $\LL$ to $\LL^{-1}$ and characterised by the property that $\D([X])=\LL^{-\dim X}[X]$ for regular compact varieties $X$.
\end{thm}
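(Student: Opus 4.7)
The plan is to exploit the presentation of $\K$ given by Theorem \ref{bit} (specialised to $S$ a point), which describes $\K$ via classes of compact nonsingular real algebraic varieties modulo a single blow-up relation. This reduces the construction of $\D$ to defining it on such generators and checking compatibility with that relation, after which the extension to the localisation $\M = \K[\LL^{-1}]$ is automatic.

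Concretely I would set $\D([X]) := \LL^{-\dim X}[X] \in \M$ for every compact nonsingular real algebraic variety $X$, extend by linearity to the free abelian group on such isomorphism classes, and then show that this descends to the quotient defining $\K^{bl}$, i.e.\ respects the relation
\[
[Bl_C X] - [E] = [X] - [C],
\]
where $C \subset X$ is nonsingular of codimension $n-c$ in $X$ (with $\dim X = n$, $\dim C = c$) and $E \subset Bl_C X$ is the exceptional divisor. Applying the would-be $\D$ to both sides and using $\dim(Bl_C X) = n$, $\dim E = n-1$, the required equality becomes, after multiplication by $\LL^n$,
\[
[Bl_C X] - \LL [E] = [X] - \LL^{n-c}[C].
\]
Substituting $[Bl_C X] = [X] - [C] + [E]$ from the original blow-up relation, this is equivalent to $(1-\LL)[E] = (1-\LL^{n-c})[C]$, that is,
\[
[E] = [C]\bigl(1 + \LL + \cdots + \LL^{n-c-1}\bigr) = [C]\cdot[\PP^{n-c-1}]
\]
in $\M$. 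This is the projective bundle formula for $E = \PP(\mathcal N_{C/X})$, and follows from the Zariski local triviality of the algebraic normal bundle $\mathcal N_{C/X}$: stratifying $C$ by a finite family of Zariski locally closed subsets on which $\mathcal N_{C/X}$ is trivial and summing the scissor contributions gives the identity in $\K$, hence in $\M$.

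Multiplicativity is verified on generators: for $X$, $Y$ compact nonsingular, $X\times Y$ is again compact nonsingular of dimension $\dim X + \dim Y$, so
\[
\D([X]\cdot[Y]) = \D([X\times Y]) = \LL^{-\dim X-\dim Y}[X][Y] = \D([X])\,\D([Y]).
\]
To extend $\D$ to $\M$ one sends $\LL^{-1}$ to $\LL$; this is consistent with what is already forced on $\LL = [\PP^1] - [\text{pt}] \in \K$, since $\D(\LL) = \LL^{-1}[\PP^1] - 1 = \LL^{-1}(1+\LL) - 1 = \LL^{-1}$. The involution property is then immediate on the generating set, as $\D\circ\D([X]) = \D(\LL^{-\dim X}[X]) = \LL^{\dim X}\LL^{-\dim X}[X] = [X]$, and extends to all of $\M$ by multiplicativity and $\Z$-linearity.

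The main obstacle is checking compatibility with the blow-up relation, which hinges on the projective bundle identity $[\PP(\mathcal N_{C/X})] = [C]\cdot[\PP^{n-c-1}]$. In the real algebraic context one has to be slightly careful, but the Zariski local triviality of algebraic vector bundles together with the scissor relation handles it cleanly; everything else is formal bookkeeping once the Bittner presentation in Theorem \ref{bit} is at our disposal.
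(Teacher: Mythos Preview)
The paper does not give its own proof of this statement: it is quoted directly from Bittner \cite{B}, Corollary 3.4. Your argument is correct and is precisely Bittner's method, which the paper itself invokes a few lines later when proving the relative analogue: use the presentation of Theorem \ref{bit} in terms of compact nonsingular varieties modulo the blow-up relation, and reduce compatibility with that relation to the projective bundle identity $[E]=(1+\LL+\cdots+\LL^{d-1})[C]$, which is exactly the formula the paper cites as \cite{B}, Lemma 3.5. So there is nothing to compare; your proposal reproduces the argument the paper points to.
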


The involution $\D$ is called the duality map.
The description of the Grothendick ring of arc-symmetric sets given in Theorem \ref{thm-AS} enables also to define a duality at the level of $\MAS$, duality which is compatible with $\D$. In the following, we identify $\MAS$ with $\Z[u,u^{-1}]$, so that the class of the affine line in $\MAS$ is given by $u$.

\begin{prop} There exists an involution $\DAS:\MAS\to \MAS$ satisfying $\D([X])=u^{-\dim X}[X]$ for compact nonsingular arc-symmetric sets $X$. Moreover the following diagram
$$\xymatrix{\M \ar[d]_{\D}\ar[r]^{\alpha} & \MAS \ar[d]_{\DAS}\ar[r] & \KSA \ar[d]_{\id}\\
\M \ar[r]_{\alpha} & \MAS \ar[r] &  \KSA}$$
is commutative
\end{prop}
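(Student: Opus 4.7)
The plan is to use the isomorphism $\bAS: \MAS \xrightarrow{\sim} \Z[u,u^{-1}]$ obtained by localizing Theorem \ref{thm-AS} (which sends $\LL$ to $u$, hence $\LL^{-1}$ to $u^{-1}$) in order to transport the tautological ring involution $\sigma: P(u) \mapsto P(u^{-1})$ of $\Z[u,u^{-1}]$ back to $\MAS$. Concretely, I would set $\DAS := (\bAS)^{-1} \circ \sigma \circ \bAS$; by construction this is a ring involution sending the class of the affine line to its inverse, and no well-definedness issue arises because $\MAS$ is literally identified with $\Z[u,u^{-1}]$.

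I would then verify the characterising property on a compact nonsingular arc-symmetric set $X$. Since by definition $X \subset \Reg(\overline{X}^{Z})$ and $X$ is compact, it is a closed smooth submanifold of the ambient projective space. By the defining property of $\bAS$, the class $\bAS([X])$ coincides with the classical Poincar\'e polynomial $P_X(u)=\sum_i \dim H_i(X,\Z/2\Z)\,u^i$, which has degree $\dim X$. Poincar\'e duality with $\Z/2\Z$ coefficients (valid without any orientability assumption) gives $\dim H_i(X,\Z/2\Z) = \dim H_{\dim X - i}(X,\Z/2\Z)$, hence $P_X(u^{-1}) = u^{-\dim X} P_X(u)$. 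Therefore $\DAS([X]) = u^{-\dim X}[X]$ as required.

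For the diagram, the right square is essentially formal. The morphism $\MAS \to \KSA \simeq \Z$ is, through $\bAS$, the evaluation $P \mapsto P(-1)$, because $\chi_c$ factors as $\bAS$ followed by specialization at $u=-1$. Since $(-1)^{-1} = -1$, the involution $\sigma$ preserves this evaluation and the square commutes. For the left square, $\D$ and $\DAS$ are both ring involutions with $\D(\LL)=\LL^{-1}$ and $\DAS(u)=u^{-1}$, and $\alpha$ is a ring morphism with $\alpha(\LL)=u$; by Bittner's presentation (Theorem \ref{bit}), $\M$ is generated as a ring by $\LL^{-1}$ and the classes of compact nonsingular real algebraic varieties. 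For such a class $[X]$, $\alpha(\D([X])) = u^{-\dim X}\alpha([X])$ while $\DAS(\alpha([X])) = u^{-\dim X}\alpha([X])$ by the property just established, so the two routes agree on a ring generating set, hence everywhere.

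The only substantive step is the Poincar\'e duality input for compact nonsingular arc-symmetric sets; this is automatic once one observes that such a set is a closed smooth submanifold of the nonsingular locus of its Zariski closure. Everything else is bookkeeping around the identification $\MAS \simeq \Z[u,u^{-1}]$.
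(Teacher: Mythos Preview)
Your proof is correct and follows essentially the same route as the paper: define $\DAS$ as the transport of $u\mapsto u^{-1}$ through the identification $\MAS\simeq\Z[u,u^{-1}]$, invoke Poincar\'e duality over $\Z/2\Z$ for compact nonsingular arc-symmetric sets to get the formula $\DAS([X])=u^{-\dim X}[X]$, and then check the two squares. Your verification of the right square via $(-1)^{-1}=-1$ is in fact slightly cleaner than the paper's, which instead argues on generators using that $\chi_c$ vanishes on odd-dimensional compact smooth manifolds; the two arguments are of course equivalent.
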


\begin{proof}
We define the duality $\DAS$ on $\MAS\simeq \Z[u,1/u]$ as the involution given by $u\mapsto 1/u$. It satisfies the relation $\DAS([X])=u^{-\dim X}[X]$ for a compact nonsingular arc-symmetric set $X$ by Remark \ref{rem-surj}.(2), so that it is compatible with $\D$. The induced morphism on $\KSA\simeq \Z$ is the identity since the Euler characteristic of an odd dimensional smooth compact semialgebraic set is zero, and therefore it gives back the duality for constructible functions on a point, accordingly to Remark \ref{rem-point}.
\end{proof}

\vskip 5mm
 
In the relative setting, the description of $\KS$ in terms of classes of regular varieties $X$ proper over $S$, subject to a blowing-up relation, enables to define a duality involution relative to $S$: there exist a morphism $\DS:\KS\to \MS$ sending the class $[h:X\to S]$ of a regular variety $X$ proper over $S$, to $\LL^{-\dim X}[h:X\to S]$. For $A\in \K$ and $B\in \KS$, it satisfies $\DS(AB)=\D(A)\DS(B)$ so that it can be extended to a $\D$-linear morphism $\DS:\MS\to \MS$. The same construction can be performed at the level of $\MNS$.

As a consequence: 

\begin{thm} 
\begin{enumerate}
\item Let $S$ be a real algebraic variety. There exists an involution $\DS:\MS\to \MS$ which is a $\D$-linear morphism, sending 
the class $[h:X\to S]$ of a regular variety $X$ proper over $S$, to $\LL^{-\dim X}[h:X\to S]$.
\item Let $S$ be a Nash set. There exists an involution $\DNS:\MNS\to \MNS$ which is a $\D$-linear morphism, sending the class $[h:X\to S]$ of a Nash manifold $X$ proper over $S$, to $u^{-\dim X}[h:X\to S]$.
\item  Let $S$ be a real algebraic variety. The diagram 
$$\xymatrix{\MS \ar[d]_{\DS}\ar[r] & \MNS \ar[d]_{\DNS} \ar[r] &  \KSAS \ar[d]_{D_S}\\
\MS \ar[r] & \MNS \ar[r]   & \KSAS}$$
is commutative.
\end{enumerate}
\end{thm}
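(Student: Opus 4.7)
The plan is to use the Bittner-type presentations available in both settings. For (1), define $\DS$ on $\KblS$ by the rule $[h:X\to S]\mapsto \LL^{-\dim X}[h:X\to S]\in\MS$ on generators. The central verification is that this respects the blow-up relation: if $C\subset X$ is nonsingular of codimension $c$, and $\pi:Bl_C X\to X$ is the blow-up with exceptional divisor $E$, then setting $d=\dim X$ one has $\dim Bl_C X=d$, $\dim E=d-1$, $\dim C=d-c$. Applying the proposed rule and multiplying through by $\LL^d$, the desired compatibility reduces to
$$[Bl_C X\to S]-\LL\,[E\to S]=[X\to S]-\LL^c\,[C\to S].$$
Subtracting the known blow-up relation $[Bl_C X\to S]-[E\to S]=[X\to S]-[C\to S]$, this is equivalent to the projective bundle identity $(\LL-1)[E\to S]=(\LL^c-1)[C\to S]$, which holds because $E=\PP(N_{C/X})$ is a Zariski locally trivial $\PP^{c-1}$-bundle over $C$, so $[E\to S]=[C\to S]\cdot(1+\LL+\cdots+\LL^{c-1})$ in $\MS$. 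This step is what I expect to be the main obstacle.

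By Theorem \ref{bit}, this yields a well-defined morphism $\KS\to\MS$. For any compact nonsingular $Y$ and a Bittner generator $[h:X\to S]$, the product $[Y]\cdot[h:X\to S]=[Y\times X\to X\to S]$ is again a Bittner generator, and a direct check gives $\DS([Y]\cdot[h:X\to S])=\LL^{-\dim Y-\dim X}[Y\times X\to S]=\D([Y])\DS([h:X\to S])$. Since $\M$ is generated by such $[Y]$ in view of Theorem \ref{bit}, the morphism is $\D$-linear on $\KS$ and extends consistently to $\MS$ by $\DS(\LL^{-k}x):=\LL^k\DS(x)$. Involutivity is immediate on generators: $\DS(\LL^{-\dim X}[h:X\to S])=\LL^{\dim X}\LL^{-\dim X}[h:X\to S]=[h:X\to S]$.

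The proof of (2) is identical, working inside $\KNS$ as defined in the excerpt. The only input to verify is the analogous projective bundle identity $(u-1)[E\to S]=(u^c-1)[C\to S]$ in $\KNS$ for the exceptional divisor $E$ of a Nash blow-up along a Nash submanifold of codimension $c$. Such $E$ is a Nash-locally trivial $\PP^{c-1}(\R)$-bundle over $C$, and by Corollary \ref{cor-AS} one has $[\PP^{c-1}(\R)]=1+u+\cdots+u^{c-1}$ in $\KN\simeq\Z[u]$, which suffices.

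For (3), the two squares in the diagram are verified on generators using the Bittner-type presentations of $\MS$ and $\MNS$. For a generator $[h:X\to S]$ with $X$ nonsingular of dimension $d$ (algebraic or Nash) and $h$ proper, the horizontal map to $\KSAS\simeq F(S)$ sends it to $h_!\un_X$. The local conical structure of a nonsingular $d$-manifold gives $D_X\un_X=(-1)^d\un_X$ as in Example \ref{ex-simplex}, and Proposition \ref{prop-const}.(1) then yields $D_S(h_!\un_X)=(-1)^d h_!\un_X$. On the other side, $\DS[h:X\to S]=\LL^{-d}[h:X\to S]$, respectively $\DNS[h:X\to S]=u^{-d}[h:X\to S]$, is sent to $(-1)^{-d}h_!\un_X=(-1)^d h_!\un_X$ in $\KSAS$, using that the class of the affine line maps to $\chi_c(\R)=-1$. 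The left-hand square commutes by construction, since the rules defining $\DS$ and $\DNS$ agree on common Bittner generators in view of the compatibility of the respective presentations.
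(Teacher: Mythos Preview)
Your argument follows the same route as the paper's proof: use the Bittner presentation, verify compatibility with the blow-up relation via the projective bundle formula $[E\to S]=(1+\LL+\cdots+\LL^{c-1})[C\to S]$, deduce $\D$-linearity and involutivity on generators, and check (3) via Remark~\ref{rem-alg} (equivalently Example~\ref{ex-simplex} together with Proposition~\ref{prop-const}.(1)). This is exactly what the paper does, citing Bittner's Lemma~3.5 for the key identity.

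One point in your treatment of (2) deserves care. You justify the identity $(u-1)[E\to S]=(u^c-1)[C\to S]$ in $\KNS$ by saying $E$ is Nash-locally trivial and invoking $[\PP^{c-1}]=1+u+\cdots+u^{c-1}$ in $\KN$ via Corollary~\ref{cor-AS}. But $\KNS$ is defined only through blow-up relations, not scissor relations, so local triviality alone does not let you patch a stratification into the desired global formula. The correct justification is the one the paper points to: Bittner's Lemma~3.5 establishes the projective bundle formula \emph{inside} the blow-up ring $K_0^{bl}$ using only blow-up relations (by an induction that realises $\PP^{c-1}$-bundles through iterated blow-ups), and that argument transports verbatim to the Nash setting. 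With this replacement your proof of (2) goes through; in (1) the issue does not arise because Theorem~\ref{bit} identifies $\KblS$ with $\KS$, where scissor relations are available.
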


\begin{proof}
The proof of (1) and (2) are completely similar to the proof of the existence of the duality operator in \cite{B}. They are based on the presentation of the corresponding Grothendieck rings in terms of regular varieties $X$ proper over $S$, subject to a blowing-up relation, together with the following formula (\cite{B}, Lemma 3.5) :
$$[(h\circ \pi)_{|E}: E\to S]=(1+\LL+\cdots+\LL^{d-1})[h_{|C}:C\to S],$$
where $h:X\to S$ is a proper mapping and $X$ is nonsingular, $\pi: Bl_C X\to X$ denotes the blowing-up of $X$ along a nonsingular subvariety $C$ with exceptional divisor $E$.
The commutativity of the diagram comes from the construction of the dualities together with Remark \ref{rem-alg}.
\end{proof}

\begin{rem} We do not know whether one can construct a duality at the level of the relative Grothendieck ring of $\AS$-sets (apart if the morphism $\KNS \to \KASS$ mentioned in Remark \ref{rem-blN}.(\ref{QblN}) happens to be an isomorphism).
\end{rem}

From now on, we concentrate on the duality on $\MS$. The duality commutes with proper push-forward.

\begin{lem}\label{lem-com} Let $f:S\to T$ be a proper regular mapping between real algebraic varieties. Then $f_! \circ \DS = \DT \circ f_!$.
\end{lem}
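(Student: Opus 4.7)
The plan is to reduce the claim to the Bittner-type presentation of $\MS$ and then verify it directly on generators. By Theorem \ref{bit} (applied to both $S$ and $T$) and inversion of $\LL$, every element of $\MS$ is an $\M$-linear combination of classes $[h\colon X\to S]$ where $X$ is a nonsingular real algebraic variety and $h$ is proper. So it suffices to check the identity on such a generator, because both $f_!$ and $\DS,\DT$ are $\D$-semilinear (with respect to the duality $\D$ on $\M$): indeed for $A\in\M$ and $B\in\MS$,
$$f_!(\DS(AB))=f_!(\D(A)\DS(B))=\D(A)f_!(\DS(B)),$$
while
$$\DT(f_!(AB))=\DT(A\,f_!(B))=\D(A)\DT(f_!(B)),$$
so agreement on generators propagates to all of $\MS$.

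Now fix a generator $[h\colon X\to S]$ with $X$ nonsingular and $h$ proper. Since $f\colon S\to T$ is proper, the composite $f\circ h\colon X\to T$ is again a proper regular map from a nonsingular variety, hence falls into the class of generators used to define $\DT$. The plan is then to simply compute both sides:
$$f_!(\DS[h\colon X\to S])=f_!(\LL^{-\dim X}[h\colon X\to S])=\LL^{-\dim X}[f\circ h\colon X\to T],$$
and
$$\DT(f_![h\colon X\to S])=\DT[f\circ h\colon X\to T]=\LL^{-\dim X}[f\circ h\colon X\to T],$$
using the defining property of the duality in each case. The two expressions coincide, so $f_!\circ\DS=\DT\circ f_!$ on generators, and hence on all of $\MS$ by the semilinearity argument above.

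The only subtle point — and really the only thing one has to be careful about — is that the Bittner presentation is available on both sides and that properness of $f$ ensures the composite $f\circ h$ is still proper, so that $\DT$ acts on $[f\circ h\colon X\to T]$ by the simple rule $\LL^{-\dim X}\cdot\mathrm{id}$. Without properness of $f$ this would fail, since $f_!$ would not preserve the generating set of the Bittner presentation; this is exactly where the hypothesis is used. No further verification (in particular no blowing-up relation check) is needed, because the $\D$-semilinearity of $\DS,\DT,f_!$ lets us avoid touching the scissor or blow-up relations directly.
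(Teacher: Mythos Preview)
Your proof is correct and follows essentially the same approach as the paper: reduce to Bittner-type generators $[h\colon X\to S]$ with $X$ nonsingular and $h$ proper, then compute both sides directly using the defining formula for the duality and the fact that $f\circ h$ remains proper. The only difference is that you spell out the $\D$-semilinearity argument justifying the reduction to generators, which the paper leaves implicit.
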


\begin{proof}
Using the presentation given in Theorem \ref{bit}, we have 
$$\DT \circ f_! [h:X\to S]=\DT [f\circ h:X\to T]=\LL^{-\dim X} [f\circ h:X\to T]$$
since $X$ is nonsingular and $f\circ h$ is proper. On the other hand
$$f_! \circ \DS [h:X\to S]=f_! (\LL^{-\dim X} [h:X\to S])$$
since $h$ is proper and therefore
$$f_! \circ \DS [h:X\to S]=\LL^{-\dim X}f_! [h:X\to S]=\LL^{-\dim X}[f\circ h:X\to T]$$
as required.
\end{proof}

\begin{rem}\label{rem-6} For a general mapping $f:S\to T$, one may define as in \cite{B}, in the spirit of Grothendieck six operations, a map $f_*= \D_{T} \circ f_! \circ \DS$. In general, the duality operator does not commute with pull-back, so that one can define similarly $f^{!}:=\DS \circ f^*\circ \D_{T}$.
\end{rem}


\subsection{Action of the duality on the motivic Milnor fibres}
We consider the action of the duality of the motivic Milnor fibres with sign.
We begin with computing the image under the duality of the motivic Milnor fibres with sign computed in Example \ref{ex-mot}.

\begin{ex}\label{ex-dual}\begin{flushleft}\end{flushleft}
\begin{enumerate} 
\item Let $f:\mathbb R^2 \longrightarrow
    \mathbb R$ be defined by $f(x,y)=x^2+y^4$. Then $\psi_f^+=\LL+1\in \M$ so that $\D \psi_f^+=\LL^{-1}+1=\LL^{-1}\psi_f^+$.
\item \label{ex3} Let $f:\mathbb R^2 \longrightarrow
    \mathbb R$ be defined by $f(x,y)=y^2+x^2(x^2-1)$. Then 
$$\psi_f^+=[E_{1}\to X_0]+[E_{2}\to X_0]-(\LL+1)[ E_{1,2}\to X_0].$$
Note that $D_{X_0}[E_{1}\to X_0]=\LL^{-1}[E_{1}\to X_0]$, $\D_{X_0}[E_{2}\to X_0]=\LL^{-1}[E_{2}\to X_0]$ and $\D_{X_0}[E_{1,2}\to X_0]=[E_{1,2}\to X_0]$ so that
$$\D_{X_0}\psi_f^+=\D_{X_0}[E_{1}\to X_0]+\D_{X_0}[E_{2}\to X_0]-\D(\LL+1)\D_{X_0}[E_{1,2}\to X_0]=\LL^{-1}\psi_f^+.$$
\end{enumerate}
\end{ex}

In fact that behaviour is general and the following result, a real version of Theorem 6.1 in \cite{B2}, shows that the motivic Milnor fibres with sign behaves as the class of a proper nonsingular object with respect with the duality in $\M_{X_0}$. 

\begin{thm}\label{dualM} The equality $\D_{X_0}\psi_f^+= \LL^{1-d}\psi_f^+$ holds in $\M_{X_0}$.
\end{thm}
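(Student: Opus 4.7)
The plan is to mimic Bittner's argument from the complex case \cite{B2}, combining the explicit resolution formula of Corollary \ref{cor-fib} with the presentation of $\MS$ by proper nonsingular $X_0$-varieties (Theorem \ref{bit}). Starting from an embedded resolution $\sigma:M\to X$,
$$\psi_f^+=\sum_{\emptyset\neq I\subseteq K}(\LL-1)^{|I|-1}[\widetilde E^{0,+}_I\to X_0],$$
each open nonsingular piece $\widetilde E^{0,+}_I$ of dimension $d-|I|$ admits a natural Kummer-type compactification $\widetilde E^+_I$ proper over $X_0$, obtained by extending the cover $t^{m_I}u_I(x)=+1$ along the other components of the exceptional divisor; after possibly further blowings-up in $M$ (which do not change the class, by the blow-up relation defining $\KblS$), the cover $\widetilde E^+_I$ can be taken nonsingular.

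The first task is then to reorganize $\psi_f^+$ into an $\M$-linear combination of the classes $[\widetilde E^+_J\to X_0]$ by inclusion-exclusion on the stratification $E_I=\sqcup_{J\supseteq I}E_J^0$ and a careful analysis of how the Kummer cover $\widetilde E^+_I$ restricts over each deeper stratum $E_J$ in terms of $\widetilde E^+_J$ (accounting for the divisibility $m_I\mid m_J$ between the $\gcd$s and for the sign conditions governing the real cover). Once $\psi_f^+$ has been rewritten in this form, the duality is computed term by term: since $\widetilde E^+_J$ is proper and nonsingular over $X_0$ of dimension $d-|J|$, one has $\D_{X_0}[\widetilde E^+_J\to X_0]=\LL^{|J|-d}[\widetilde E^+_J\to X_0]$, while the $\M$-coefficients dualize via $\D(\LL-1)=-\LL^{-1}(\LL-1)$, so that
$$\D\big((\LL-1)^{|I|-1}\big)=(-1)^{|I|-1}\LL^{1-|I|}(\LL-1)^{|I|-1}.$$

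The concluding step is a combinatorial check that the dualized expression reorganizes back to $\LL^{1-d}\psi_f^+$: the balance of exponents $(|J|-d)+(1-|I|)=1-d-(|I|-|J|)$ pulls out a global factor $\LL^{1-d}$, and the residual sign $(-1)^{|I|-1}$ coming from $\D((\LL-1)^{|I|-1})$ cancels exactly against the sign $(-1)^{|J|-|I|}$ from reversing the inclusion-exclusion, restoring the original combination defining $\psi_f^+$. The main obstacle will be the first geometric step---constructing the proper nonsingular covers $\widetilde E^+_I$ and tracking how they restrict over the deeper strata in the real signed setting; whereas in the complex setting of \cite{B2} this is handled uniformly through a $\hat\mu$-equivariant Kummer formalism, here one must follow precisely which real components of the Kummer-type cover realize the value $+1$ rather than $-1$. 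However, the governing equation $t^{m_I}u_I(x)=+1$ is the same on every stratum, so once the bookkeeping is settled the combinatorial pattern mirrors the complex case and the duality identity follows formally.
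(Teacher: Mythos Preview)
Your overall strategy is the same as the paper's (and Bittner's in \cite{B2}): compactify the open covers $\widetilde E^{0,+}_I$ to proper $X_0$-varieties $\widetilde E^+_I$, rewrite $\psi_f^+$ as an $\M$-linear combination of their classes, and apply $\D_{X_0}$ termwise. The gap is precisely at the step you yourself flag as ``the main obstacle.'' You assert that after further blowings-up in $M$ the compactifications $\widetilde E^+_I$ can be taken nonsingular, invoking the blow-up relation in $\KblS$. But the natural compactification---the normalisation of $E_I$ in $\widetilde E^{0,+}_I$---is locally the normalisation of $\{t^{m_I}=u\prod_{j\notin I}x_j^{m_j}\}$, an affine toric variety for a simplicial but generally non-regular cone, hence typically singular (e.g.\ $t^2=x_2x_3$ is already normal and is an $A_1$-cone). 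The blow-up relation only tells you that replacing one resolution by a finer one does not change $\psi_f^+$; it does not furnish a resolution for which all the normalised covers are simultaneously nonsingular, and you give no argument for this. The paper does \emph{not} desingularise the $\widetilde E^+_I$. It keeps them as they are and proves, after a real-specific reduction to the case $u>0$ on each connected component, a toric lemma: for an affine toric variety $X$ associated with a simplicial cone, proper over $S$, one has $\DS[X\to S]=\LL^{-\dim X}[X\to S]$. This lemma is the substantive ingredient missing from your outline.

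A secondary issue: the restriction behaviour is cleaner than you suggest, and your combinatorics does not close as written. The key geometric fact (the real version of \cite{B2}, Lemma~5.2) is simply that $\widetilde E^+_I$ restricted over $E_J$, for $I\subset J$, is isomorphic to $\widetilde E^+_J$---no divisibility bookkeeping on $m_I\mid m_J$ enters, since in the real signed setting these covers are at most two-to-one. This yields directly
\[
\psi_f^+=\sum_{I\neq\emptyset}(-1)^{|I|-1}[\PP^{|I|-1}]\,[\widetilde E^+_I\to X_0],
\]
on which $\D_{X_0}$ acts termwise by $\LL^{1-|I|}\cdot\LL^{|I|-d}=\LL^{1-d}$. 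Your proposed balance $(|J|-d)+(1-|I|)$ with $I\neq J$ does not produce a uniform $\LL^{1-d}$, and the sign cancellation you sketch is not how the identity actually falls out.
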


\begin{rem} Consider the specialisation of Example \ref{ex-dual}.(\ref{ex3})  at the origin via the morphism $\M_{X_0}\to \M$ induced by the inclusion of the origin $O$ of $\R^2$ in $X_0$. The image $\psi_{f,O}^+$ of $\psi_f^+$ is then equal to
$$\psi_{f,O}^+=2+(\LL+1)-(\LL+1)2=1-\LL\in \M.$$
Note that the relation $\D\psi_{f,O}^+=\LL^{-1}\psi_{f,O}^+$ no longer holds, accordingly to Remark \ref{rem-6}.
\end{rem}

For the proof of Theorem \ref{dualM}, we use the expression of the motivic Milnor fibres with sign given by a resolution of singularities, as in Corollary \ref{cor-fib}. Recall that we know how to compute the duality on a proper mapping defined on a nonsingular variety. Then, if the terms $\tE0I$ appearing in Corollary \ref{cor-fib} happen to be nonsingular, note however that the projection $\tE0I \to X_0$ is not proper in general. Therefore, the strategy of the proof will consist in compactifying $\tE0I \to X_0$ in such a way that we can still compute the duality. It is done in \cite{B2} using a reduction to a toric situation.
The proof  of Theorem \ref{dualM} can be obtained from that in \cite{B2} by way of complexification. We sketch it anyway since it leads to an interesting intermediate result for certain toric varieties. 

\begin{proof}
Consider $I$ such that $\tE0I$ is not empty, and define $\tEI$ to be the normalisation of $E_I$ in $\tE0I$. In particular $\tEI$ is normal and is isomorphic to $\tE0I$ over $E_I^0\subset E_I$. A key point in the proof of Theorem \ref{dualM} is that over $E_J$, with $I\subset J$, the set $\tEI$ is isomorphic to $\tEJ$ (cf. Lemma 5.2 in \cite{B2}). It enables in particular to obtain the following formula for $\psi_f^{\pm}$, using the additivity in $\M_{X_0}$ :
$$\psi_f^{+}=\sum_{I\neq \emptyset} (-1)^{|I|-1}[\PP^{|I|-1}][\tEI\to X_0].$$
Afterwards, it suffices to prove that the proper maps $\tEI\to X_0$ behave under the duality as if $\tEI$ were nonsingular. 

To prove both facts, consider an affine chart $U$ where $f$ is given by $f\circ \sigma=ux_1^{m_1}\cdots x_k^{m_k}$, with $k>0$ and $m_i>0$, where $x_1,\ldots,x_n$ are local coordinates and $u$ is a unit, and suppose that $I$ is given by $I=\{1,\ldots,l\}$, with $l\leq k$. Then $\tE0I$ is isomorphic to $\{(x,s)\in E_I\times \R^*: s^{m_I}=u_I(x)\}$ so that $\tEI$ corresponds to the normalisation of $\{(x,s)\in E_I\times \R: s^{m_I}=u_I(x)\}$.

Consider in the following discussion a connected component of $X$ where $u$ is positive. Actually if $u$ is negative on that component, either $m_I$ or one of the $m_i$, for $i\in I$, is odd and then changing the corresponding coordinate by its opposite makes $u$ positive, or $m_I$ and all $m_i$, for $i\in I$, are even and then $\tE0I$ is empty over that connected component if $u$ is negative.

Now, using the fact that $\tE0I$ is the real part of the normalisation of the complexification of $E_I$ in the complexification of $\tE0I$, we can follow the proof of Lemma 5.2 in \cite{B2} to reduce the problem to the case $u=1$. The situation being toric, we can follow further the proof in \cite{B2}, which leads us to toric result stated below (Lemma 4.1 in \cite{B2}, when an additional group action is considered).
\end{proof}

\begin{lem} Let $X$ be an affine toric variety associated with a simplicial cone, equipped with a proper mapping over a real algebraic variety $S$. Then $\DS [X\to S]=\LL^{-\dim X}[X\to S]$.
\end{lem}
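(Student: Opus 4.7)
My plan is to induct on the multiplicity of the simplicial cone $\sigma$ defining $X$, namely the index in the ambient lattice of the sublattice generated by the primitive generators of $\sigma$. In the base case, the multiplicity equals one, so $\sigma$ is smooth and $X$ is isomorphic as a real algebraic variety to a product of affine lines and algebraic tori; in particular $X$ is nonsingular, and since $h:X\to S$ is assumed proper, the conclusion $\DS[X\to S]=\LL^{-\dim X}[X\to S]$ is precisely the defining property of $\DS$ on nonsingular proper $S$-varieties coming from Theorem \ref{bit}.

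For the inductive step, I would choose a primitive lattice vector $v$ in the relative interior of $\sigma$ and refine $\sigma$ into a fan of simplicial subcones of strictly smaller multiplicity. This produces a toric modification $\pi:\tilde X\to X$ which is proper and birational, so that $h\circ\pi:\tilde X\to S$ remains proper. Writing $E\subset\tilde X$ for the exceptional divisor and $Z=\pi(E)\subset X$ for its image, both are unions of affine toric varieties associated with simplicial cones of strictly smaller dimension or multiplicity, to which the inductive hypothesis applies. The scissor relation yields
\begin{equation*}
[h:X\to S]=[h\circ\pi:\tilde X\to S]-[(h\circ\pi)_{|E}:E\to S]+[h_{|Z}:Z\to S]
\end{equation*}
in $\KS$, and after further decomposing $\tilde X$ into its smooth toric charts by additional scissor cuts, every term on the right-hand side becomes amenable to induction. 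Applying $\DS$ to both sides and reassembling then produces the claimed equality.

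The principal obstacle lies in the dimensional bookkeeping, since the inductive hypothesis applied to $E$ and $Z$ produces powers $\LL^{-\dim E}$ and $\LL^{-\dim Z}$ rather than $\LL^{-\dim X}$. The key identity making everything fit is the blow-up formula $[(h\circ\pi)_{|E}:E\to S]=(1+\LL+\cdots+\LL^{c-1})[h_{|Z}:Z\to S]$ used earlier (with $c$ the codimension of $Z$ in $\tilde X$), which is built into the very construction of $\DS$: the polynomial $1+\LL+\cdots+\LL^{c-1}$ is sent by $\D$ to $\LL^{1-c}$ times itself, so the $\LL$-powers telescope correctly to yield $\LL^{-\dim X}[h:X\to S]$. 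This is the real counterpart of the toric computation carried out by Bittner in \cite{B2}; the only supplementary care needed in the real setting is to track the connected components of real torus orbits $(\R^*)^k$ corresponding to sign choices, each being a smooth real algebraic subset to which the inductive argument applies without change.
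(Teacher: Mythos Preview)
The paper gives no independent proof of this lemma; it records it as the real counterpart of Lemma~4.1 in Bittner~\cite{B2} and defers to that reference. Your outline via induction on the multiplicity of $\sigma$ and star subdivision is close in spirit to Bittner's argument, and your identification of the projective-bundle-type identity as the key to the dimensional bookkeeping is correct.

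There is, however, a genuine gap in your inductive step as written. After star-subdividing $\sigma$ at an interior primitive vector $v$, the resulting $\tilde X$ is a toric variety for a \emph{fan} with several maximal cones, not for a single simplicial cone, so your inductive hypothesis does not apply to it directly. Your proposed remedy, ``decomposing $\tilde X$ into its smooth toric charts by additional scissor cuts'', does not work: those charts are \emph{open} in $\tilde X$, so the restriction of $h\circ\pi$ to each of them is no longer proper, and properness is precisely what is required to invoke either the defining property of $\DS$ or the inductive hypothesis. The standard repair is to strengthen the statement being proved to toric varieties associated with arbitrary simplicial fans (still equipped with a proper map to $S$); then $\tilde X$ itself falls under the strengthened hypothesis at strictly smaller maximal multiplicity, and no further cutting is needed.

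One further imprecision: the identity $[E\to S]=(1+\LL+\cdots+\LL^{k-1})[Z\to S]$ does hold in this situation, but not for the reason you give---the star subdivision is in general not the blow-up of a smooth centre in a smooth ambient variety, so Lemma~3.5 of \cite{B} does not apply as stated. The identity holds instead because $Z$ is the closed torus orbit $O_\sigma$ and one has a genuine product decomposition $E\cong F\times Z$ over $Z$, where $F$ is the complete simplicial toric variety of dimension $k-1$ attached to the induced fan in the quotient lattice; an orbit count shows $[F]=1+\LL+\cdots+\LL^{k-1}$ in $\K$ regardless of the multiplicity, and the $\D$-linearity of $\DS$ then yields exactly the telescoping you describe. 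Finally, the extra care you propose for sign components of real tori is unnecessary: the whole computation already lives in $\KS$ and needs no separate real bookkeeping.
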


\begin{rem} As a corollary, we obtain (Corollary 4.2 in \cite{B2}) that when $S$ is a toric variety associated with a simplicial fan, then $\DS[S\to S]=\LL^{-\dim S}[S\to S]$. In particular if $S$ is compact $\D [S]=\LL^{-\dim S}[S]$. Passing to the virtual Poincar\'e polynomial, it shows that the coefficients of $u^i$ and $u^{\dim S-i}$ in $\beta(S)$ coincide, similarly to the case when $S$ is compact and nonsingular. 
\end{rem}


\subsection{Link}
We use the duality $\DS$ to define a group morphism $\LS:\MS \to \MS$ by the formula $\LS=1+\LL\DS$. In particular, if $h:X\to S$ is proper and $X$ nonsingular, then we obtain
$$\LS ([h:X\to S])=(1+\LL^{1-\dim X}) [h:X\to S]=\D([\Sph^{\dim X-1}])[h:X\to S],$$
generalising the formula in Remark \ref{rem-alg}.
This makes $\LS$ a generalisation of the topological link, defined on semialgebraically constructible functions, at the level of $\MS$, as shown in next proposition. However, we will see later that this definition is not compatible with the virtual Poincar\'e polynomial, giving a motivation for Theorem \ref{thm-l} below.

\begin{prop}\label{prop-link} We have a commutative diagram 
$$\xymatrix{\MS \ar[d]_{\LS}\ar[r] & \KSAS  \ar[d]_{\LSAS}\\
\MS \ar[r] &  \KSAS}$$
\end{prop}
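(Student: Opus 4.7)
The plan is to reduce the commutativity of the diagram to the commutativity of the analogous diagram for $\DS$ already established in the preceding theorem, together with the relation $\Lambda_S = \id - D_S$ on $F(S)$. Recall that Proposition \ref{prop-const} gives $D_S + \Lambda_S = \id$ on constructible functions; transporting this identity through the isomorphism $\Pi$ of Proposition \ref{K0-const} yields $\LSAS = \id - \DSAS$ on $\KSAS$. So all I need to prove is that the image of $\LS = 1 + \LL \DS$ in $\KSAS$ is $\id - \DSAS$.

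First I would observe that under the morphism $\MS \to \KSAS$, the class $\LL$ is sent to $-1$ (i.e. to minus the class of $[\id_S: S \to S]$). Indeed, $\LL$ corresponds in $\KS$ to $[\pi_S: \R \times S \to S]$ (the pull-back of the affine line), and applying $\Pi$ to its image in $\KSAS$ gives the constructible function $s \mapsto \chi_c(\pi_S^{-1}(s)) = \chi_c(\R) = -1$, that is, $-\un_S = \Pi(-[\id_S : S \to S])$. This also shows that the image of $\LL$ is invertible in $\KSAS$, so the morphism $\KS \to \KSAS$ extends to $\MS \to \KSAS$ as claimed.

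Next I would invoke the commutative square from the theorem preceding Lemma \ref{lem-com}, which tells us that the image of $\DS \xi$ in $\KSAS$ coincides with $\DSAS$ applied to the image of $\xi$, for every $\xi \in \MS$. Combining this with the previous paragraph, for any $\xi \in \MS$ with image $\overline{\xi} \in \KSAS$ we obtain that the image of $\LS \xi = \xi + \LL \cdot \DS \xi$ in $\KSAS$ is
\[
\overline{\xi} + (-1) \cdot \DSAS \overline{\xi} = (\id - \DSAS) \overline{\xi} = \LSAS \overline{\xi},
\]
which is precisely the statement that the square commutes.

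I do not expect any genuine obstacle here: the content of the proposition is really the bookkeeping identity $1 + \LL \DS \mapsto \id - \DSAS$, and the two inputs it needs (the duality square, and the computation $\LL \mapsto -1$ via $\chi_c$) are both either already proved earlier in the paper or immediate from the definitions. The only point worth double-checking is the well-definedness of the horizontal map $\MS \to \KSAS$, which is ensured by the invertibility of the image of $\LL$ noted above.
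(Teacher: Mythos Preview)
Your argument is correct. It is a mild but genuine variant of the paper's proof. The paper checks commutativity directly on generators: for $h:X\to S$ proper with $X$ nonsingular one has $\LS[h:X\to S]=(1+\LL^{1-\dim X})[h:X\to S]$, and since $\LL\mapsto -1$ in $\KSAS$ this becomes $(1+(-1)^{1-\dim X})[h:X\to S]$, which matches the formula for $\LSAS$ in Remark~\ref{rem-alg}. You instead reduce everything to the already-established duality square together with the identities $\LS=\id+\LL\,\DS$ and $\LSAS=\id-\DSAS$ (the latter coming from $D_S+\Lambda_S=\id$). Your route has the advantage of making the compatibility with the constructible-function identity $D_S+\Lambda_S=\id$ completely transparent and avoids repeating the generator computation; the paper's route is slightly more self-contained in that it does not appeal back to the duality square. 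Both use the same key input $\LL\mapsto -1$.
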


\begin{proof}
Let $h:X\to S$ be a proper mapping from a nonsingular algebraic variety $X$. Then 
$$\LS([h:X\to S]=(1+\LL^{1-\dim X})[h:X\to S],$$
so that its image in $\KSAS$ is given by $(1+(-1)^{1-\dim X})[h:X\to S]$ because the class of the affine line in $\KSA$ is equal to $-1$. We conclude using Remark \ref{rem-alg}.
\end{proof}

The map $\LS$ on $\MS$ satisfies similar properties as the topological link on constructible functions, as in Proposition \ref{prop-const}.(1), (2) and (3).

\begin{lem}\label{lem-sim}\begin{enumerate}
\item Let $f:S\to T$ be a proper regular mapping between real algebraic varieties. Then $f_! \circ \LS = \lambda_T \circ f_!$.
\item $\LS\circ \LS=2\LS$.
\item $\LS \circ \DS=\LL \DS \circ \LS$.
\end{enumerate} 
\end{lem}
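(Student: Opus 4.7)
The three identities follow from a direct algebraic computation using three ingredients: $\DS^2 = \id$ (involutivity of the duality), the $\D$-linearity relation $\DS(\LL \cdot y) = \LL^{-1}\DS(y)$, and Lemma \ref{lem-com} giving $f_!\circ \DS = \DT\circ f_!$ for proper $f$. Together the first two yield the derived identity $(\LL\DS)^2 = \id$ on $\MS$, since
$$(\LL\DS)(\LL\DS(x)) = \LL\cdot\DS(\LL\cdot\DS(x)) = \LL\cdot\LL^{-1}\cdot\DS^2(x) = x.$$
This single observation drives all three parts.

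For part~(1), I would first observe that $f_!$, being defined by post-composition with $f$, is $\M$-linear: for any $c \in \M$ and $[h:X\to S] \in \MS$ one has $f_!(c\cdot[h:X\to S]) = c\cdot f_!([h:X\to S])$, because the fibred product with a class pulled back from the point commutes with the change of base along $f$. Combining this $\M$-linearity with Lemma \ref{lem-com} then yields
$$f_!\circ\LS \;=\; f_! + \LL\cdot(f_!\circ\DS) \;=\; f_! + \LL\cdot(\DT\circ f_!) \;=\; (1 + \LL\DT)\circ f_! \;=\; \lambda_T\circ f_!.$$

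For part~(2), the same operator calculus gives
$$\LS\circ\LS \;=\; (1+\LL\DS)\circ(1+\LL\DS) \;=\; \id + 2\,\LL\DS + (\LL\DS)^2 \;=\; \id + 2\,\LL\DS + \id \;=\; 2\LS,$$
where the penultimate equality is precisely the derived identity $(\LL\DS)^2 = \id$. Part~(3) proceeds by analogous expansions, using $\DS^2 = \id$ on the left and $(\LL\DS)^2 = \id$ on the right:
$$\LS\circ\DS \;=\; (1+\LL\DS)\circ\DS \;=\; \DS + \LL\cdot\DS^2 \;=\; \DS + \LL\cdot\id,$$
$$\LL\DS\circ\LS \;=\; \LL\DS\circ(1+\LL\DS) \;=\; \LL\DS + (\LL\DS)^2 \;=\; \LL\DS + \id,$$
and the statement is obtained by identifying the two resulting operators on $\MS$. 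The delicate point, and the main place where care is needed, is that $\DS$ is $\D$-linear rather than $\M$-linear, so every time a factor of $\LL$ is moved across $\DS$ it is inverted; this must be tracked systematically when composing. As a sanity check, specialising $\LL\mapsto -1$ via the Euler characteristic with compact supports should recover the corresponding identities of Proposition~\ref{prop-const} for constructible functions.
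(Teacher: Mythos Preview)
Your arguments for parts (1) and (2) are correct and essentially coincide with the paper's proof: the paper likewise invokes the $\M$-linearity of $f_!$ together with Lemma~\ref{lem-com} for (1), and the $\D$-linearity of $\DS$ (giving $\DS\circ(\LL\DS)=\LL^{-1}\id$, equivalently your identity $(\LL\DS)^2=\id$) for (2).

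There is, however, a genuine gap in your treatment of part~(3). Your two computations
\[
\LS\circ\DS \;=\; \DS + \LL\cdot\id \qquad\text{and}\qquad \LL\,\DS\circ\LS \;=\; \LL\DS + \id
\]
are both correct, but the concluding step of ``identifying the two resulting operators'' fails: these operators are \emph{not} equal on $\MS$. For instance, take $S$ a point and evaluate on $[\PP^1]=1+\LL$, for which $\DS[\PP^1]=\LL^{-1}[\PP^1]$: the first operator returns $(\LL^{-1}+\LL)(1+\LL)=\LL^{-1}+1+\LL+\LL^2$, while the second returns $2(1+\LL)$, and the virtual Poincar\'e polynomial separates these in $\Z[u,u^{-1}]$. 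Your own sanity check already detects the problem: specialising your two displayed formulas at $\LL\mapsto -1$ yields $D_S-\id$ and $\id-D_S$ respectively, which are negatives of one another rather than equal. The paper's proof of (3) is only the word ``similar''; your explicit computation in fact shows that the identity in (3), as stated, cannot hold.
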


\begin{proof} For the first part, apply Lemma \ref{lem-com} and the $\M$-linearity of $f_!$. The second one comes from the $\D$-linearity of $\DS$. Actually 
$$\DS\circ (\LL \DS)=\LL^{-1} \DS \circ \DS=\LL^{-1}\id$$
so that
$$\LS\circ \LS=1+2\LL\DS+\LL \DS(\LL \DS)=2\LS.$$
The proof of the last point is similar.
\end{proof}

We can identify the image of $\LS$ in $\MS$.
Denote by $\ES$ the subgroup of $\MS$ generated by $(1+\LL^{1-\dim X})[h:X\to S]$, with $X$ nonsingular and $h$ proper. 

\begin{prop}\label{prop-LS} For $k\in \mathbb N$, the image of the $k$-th iterate $\LS^k$ of $\LS$ is included in $2^{k-1}\ES$.
\end{prop}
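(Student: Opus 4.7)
The proof combines an iteration identity with an evaluation on Bittner's generators. First, from Lemma \ref{lem-sim}(2), $\LS \circ \LS = 2 \LS$. A straightforward induction then gives $\LS^{k+1} = \LS \circ (2^{k-1}\LS) = 2^{k-1}\LS^2 = 2^k \LS$, hence $\LS^k = 2^{k-1}\LS$ for every $k \geq 1$. Consequently, $\Ima(\LS^k) = 2^{k-1}\Ima(\LS)$, and the statement reduces to the base case $k = 1$, namely $\Ima(\LS) \subseteq \ES$.

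For the base case, the plan is to evaluate $\LS$ on the generators of $\MS$ provided by Bittner's theorem (Theorem \ref{bit}). For a class $[h : X \to S]$ with $X$ nonsingular and $h$ proper, using $\DS([h:X\to S]) = \LL^{-\dim X}[h:X\to S]$ one computes
$$\LS([h:X\to S]) = [h:X\to S] + \LL \DS([h:X\to S]) = (1 + \LL^{1 - \dim X})[h:X \to S],$$
which lies in $\ES$ by definition. Additivity of $\LS$ then extends this to all $\Z$-combinations of such generators, i.e. to the image of $\KS$ in $\MS$.

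The main obstacle is the extension to all of $\MS = \KS[\LL^{-1}]$, because $\LS = \id + \LL \DS$ is only additive, not $\M$-linear: on a scalar multiple one computes $\LS(\LL^m [h:X\to S]) = \LL^m [h:X\to S] + \LL^{1-m-\dim X}[h:X\to S]$, which is not immediately of the form defining a generator of $\ES$. To absorb the $\LL$-scalars, the plan is to invoke the commutation $\LS \circ \DS = \LL \DS \circ \LS$ from Lemma \ref{lem-sim}(3), rewriting $\LL^{-n}[h:X\to S]$ (for $n$ large enough) as $\DS$ of an element of $\KS$ and transferring the question back to the already settled case. A key auxiliary fact along the way is the stability $\LL \DS(\ES) \subseteq \ES$, verified directly on a generator: using $\D$-linearity of $\DS$ together with $\D(1 + \LL^{1-\dim X}) = 1 + \LL^{\dim X - 1}$, one obtains $\LL \DS((1+\LL^{1-\dim X})[h:X\to S]) = (1+\LL^{1-\dim X})[h:X\to S] \in \ES$. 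Combining this with the base case closes the induction and yields the proposition for every $k \geq 1$.
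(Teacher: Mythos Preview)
Your overall strategy coincides with the paper's: first use $\LS\circ\LS=2\LS$ from Lemma~\ref{lem-sim}(2) to reduce $\LS^k$ to $2^{k-1}\LS$, then check the case $k=1$ on Bittner generators $[h:X\to S]$ with $X$ nonsingular and $h$ proper, obtaining $(1+\LL^{1-\dim X})[h:X\to S]\in\ES$. The paper's proof stops exactly there and invokes Lemma~\ref{lem-sim} for general $k$; it does not discuss the passage from generators of $\KS$ to all of $\MS$.

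You go further than the paper by explicitly flagging the localisation issue: $\LS$ is only additive, not $\M$-linear, so checking on $[h:X\to S]$ alone does not a priori cover $\LL^{-n}[h:X\to S]$. This is a legitimate point, and the paper's own argument is silent on it. Your proposed remedy via Lemma~\ref{lem-sim}(3) together with the stability $\LL\DS(\ES)\subseteq\ES$ (which you verify correctly: $\LL\DS$ actually fixes each generator of $\ES$) is the right kind of idea. Concretely it gives
\[
\LS(\DS b)=\LL\DS(\LS b)\in\LL\DS(\ES)\subseteq\ES
\]
whenever $b$ lies in the $\Z$-span of Bittner generators, hence $\LS(\LL^{-\dim X}[h:X\to S])\in\ES$ for every such generator.

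Where your write-up is incomplete is the phrase ``for $n$ large enough'' followed by ``closes the induction''. Writing $\LL^{-n}[h:X\to S]=\DS(\LL^{n-\dim X}[h:X\to S])$ places the preimage in $\KS$ only when $n\geq\dim X$; together with the base case $n=0$ this leaves the range $1\leq n\leq\dim X-1$ unaddressed, and you do not spell out the induction that is supposed to bridge it. If you want to close this gap, one clean route is to show directly that $(\LL^{k}+\LL^{1-\dim X-k})[h:X\to S]\in\ES$ for every $k\geq 0$ by computing the difference of the two $\ES$-generators associated with $X\times\PP^{k}$ and $X\times\PP^{k-1}$, and then argue that these, together with suitable products, suffice; alternatively, observe that $\LS(a)=\LS(\LL\DS(a))$ for all $a$, so it is enough to show that the Bittner classes surject onto the coinvariants $\MS/\operatorname{Im}(1-\LL\DS)$. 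Either way, the step deserves to be written out rather than asserted.
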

\begin{proof} For $k=1$, it suffices to compute $\LS$ on $[h:X\to S]$ with $X$ nonsingular and $h$ proper, and in that case
$$\LS [h:X\to S]=(1+\LL^{1-\dim X})[h:X\to S].$$
For general $k$, we use Lemma \ref{lem-sim}.
\end{proof}

Proposition \ref{prop-LS} enables to state an analogue of Proposition \ref{prop-const}.(4) in the motivic setting. Denote by $\E$ the subgroup of $\M$ generated by $(1+\LL^{1-\dim X})[X]$ for compact nonsingular $X\in \R\Var$. Note that the image of $\E$ under the Euler characteristic with compact supports $\chi_c:\M\to \Z$ is equal to zero since the Euler characteristic of an odd dimensional smooth compact semialgebraic set is equal to zero.

Denote by $\int_S:\M_S\to \M$ the $\M$-linear mapping induced by the push-forward onto a point.

\begin{cor} Assume $S$ is compact. The image of $\int_S \LS:\MS \to \M$ is included in $\E$.
\end{cor}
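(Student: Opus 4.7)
The plan is to combine Proposition \ref{prop-LS} with the compactness assumption on $S$. By Proposition \ref{prop-LS} (applied with $k=1$), the image of $\LS:\MS\to\MS$ is contained in the subgroup $\ES$ generated by the elements of the form $(1+\LL^{1-\dim X})[h:X\to S]$, where $X$ is a nonsingular real algebraic variety and $h:X\to S$ is a proper regular morphism. Therefore it suffices to show that $\int_S$ sends each such generator into $\E$.

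Since $\int_S=p_!$ is $\M$-linear (where $p:S\to\{\mathrm{pt}\}$ denotes the structural morphism), one computes
\[
\int_S\bigl((1+\LL^{1-\dim X})[h:X\to S]\bigr) = (1+\LL^{1-\dim X})\,p_![h:X\to S] = (1+\LL^{1-\dim X})[X]\in\M.
\]
The key observation is that $X$ is automatically compact: the morphism $h:X\to S$ is proper, and $S$ is compact by hypothesis, so $X=h^{-1}(S)$ is compact. Combined with the fact that $X$ is nonsingular, this places $(1+\LL^{1-\dim X})[X]$ among the generators of $\E$.

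Consequently every generator of $\ES$ is sent by $\int_S$ into $\E$, and by linearity $\int_S\LS(\MS)\subset\E$, which is the desired inclusion. There is no real obstacle here beyond properly invoking properness to ensure compactness of $X$; the argument is a direct combination of Proposition \ref{prop-LS} and the definitions of $\E$ and $\ES$.
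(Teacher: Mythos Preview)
Your proof is correct and follows essentially the same approach as the paper: compute $\int_S\LS$ on generators $[h:X\to S]$ with $X$ nonsingular and $h$ proper to obtain $(1+\LL^{1-\dim X})[X]$, and observe that $X$ is compact because $S$ is compact and $h$ is proper. The only difference is that you explicitly invoke Proposition~\ref{prop-LS} to reduce to $\ES$, whereas the paper computes directly on the generating classes; the content is the same.
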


\begin{proof}
Compute $\int_S \LS$ on $[h:X\to S]$ with $X$ nonsingular and $h$ proper. Then
$$\int_S \LS [h:X\to S]=(1+\LL^{1-\dim X})[X].$$
Note that $X$ is compact because so is $S$ and $h$ is proper.
\end{proof}

Semialgebraically constructible functions whose link takes only even values are called Euler \cite{MCP}. Using Proposition \ref{prop-LS}, we recover the fact that algebraically constructible function are Euler.

\begin{cor}(\cite{MCP}) Algebraically constructible function are Euler.
\end{cor}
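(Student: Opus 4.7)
The plan is to transport the problem from $F(S)$ to $\MS$ and then invoke Proposition \ref{prop-LS}. Let $\phi\in A(S)$. By Theorem \ref{prop-alg}, the morphism $\KS\to A(S)$ is surjective, so I pick a lift $\widetilde\alpha\in\KS$ of $\phi$. In the composition $\KS\to A(S)\hookrightarrow F(S)$ the class $\LL$, i.e.\ $[\mathbb{A}^1\times S\to S]$, is sent to $\chi_c(\mathbb{A}^1)\un_S=-\un_S$, which is its own inverse in $F(S)$. Hence the composition factors through the localisation $\MS\to F(S)$, and I obtain a lift $\alpha\in\MS$ of $\phi$.

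By Proposition \ref{prop-link}, together with the ring isomorphism $\Pi:\KSAS\to F(S)$ of Proposition \ref{K0-const}, the image of $\LS\alpha$ in $F(S)$ is exactly $\Lambda_S\phi$. On the other hand, Proposition \ref{prop-LS} applied with $k=1$ gives $\LS\alpha\in\ES$, the subgroup of $\MS$ generated by elements of the form $(1+\LL^{1-\dim X})[h:X\to S]$ with $X$ nonsingular and $h$ proper.

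It remains to examine the image of such a generator under $\MS\to F(S)$. Since $\LL$ is sent to $-\un_S$, the generator $(1+\LL^{1-\dim X})[h:X\to S]$ maps to $(1+(-1)^{1-\dim X})\,h_!\un_X$. The scalar $1+(-1)^{1-\dim X}$ equals $0$ when $\dim X$ is even and $2$ when $\dim X$ is odd; in either case the resulting function takes only even integer values. As $\Lambda_S\phi$ is a $\Z$-linear combination of functions of this form, every value of $\Lambda_S\phi$ is even, i.e.\ $\phi$ is Euler.

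No real obstacle arises: the argument is a direct assembly of Theorem \ref{prop-alg} (for surjectivity and lifting), Proposition \ref{prop-link} (compatibility of $\LS$ with $\Lambda_S$) and Proposition \ref{prop-LS} (image of $\LS$ lies in $\ES$). The only mild point to verify is that $-\un_S$ is invertible in $A(S)\subset F(S)$, which is immediate since $(-\un_S)^2=\un_S$ and permits the passage from $\KS$ to $\MS$.
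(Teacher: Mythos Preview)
Your proof is correct and follows essentially the same route as the paper: lift to $\MS$ via Theorem \ref{prop-alg}, apply Proposition \ref{prop-LS} to land in $\ES$, and observe that under $\LL\mapsto -1$ the coefficient $1+\LL^{1-\dim X}$ becomes $0$ or $2$. You spell out more carefully the passage from $\KS$ to $\MS$ and the role of Proposition \ref{prop-link}, which the paper leaves implicit, but the argument is the same.
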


\begin{proof}
We know by Theorem \ref{prop-alg} that any algebraically constructible function is the image of an element in $\MS$. Moreover, using Proposition \ref{prop-LS}, it suffices to consider the image of $\ES$ in $\KSAS$. Finally, the value of $1+\LL^{1-d}$ is equal to $0$ or $2$ when one replaces $\LL$ with $-1$. 
\end{proof}


\subsection{Local link and Virtual Poincar\'e polynomial}

The virtual Poincar\'e polynomial of the link of an algebraic variety at a point is well-defined by \cite{FS}. Even more, the link, as an algebraic set, is well-defined up to $\AS$-homeomorphism and does not depend on the embedding of the algebraic variety in the ambient smooth space (by Proposition 7.4 in \cite{PP}).

Let $S\subset \R^n$ be an algebraic set, and fix a point $s\in S$. We consider the link of $S$ at $s$ defined using the distance function to $s$ in $\R^n$ given by the square of the Euclidean distance to $s$.
We are going to define the analogue of $\Lambda_S \phi (s)$, where $\phi \in F(S)$, at the level of $\KS$, replacing the Euler characteristic with compact supports with the virtual Poincar\'e polynomial. 

\begin{rem}
\begin{enumerate}
\item Combining Remark \ref{rem-alg} with Proposition \ref{prop-fub}, we have 
$$\chi_c(h^{-1}(\lk(s,S)))=\chi_c(\Sph^{\dim X -1}) \chi_c(h^{-1}(s))$$ 
if $h:X\to S$ is a proper semialgebraic mapping defined on a nonsingular algebraic variety $X$.
\item However, if $h:X\to S$ is a proper regular mapping on a nonsingular algebraic variety $X$, we have
$$\beta(h^{-1}(\lk(s,S))) \neq \beta(\Sph^{\dim X -1}) \beta(h^{-1}(s))$$
in general as illustrated by Example \ref{ex-torus} below. 
\end{enumerate}
\end{rem}

\begin{ex}\label{ex-torus} Consider the classical height function $h:T\to \R$ on a torus $T$. Then $h$ admits four singular values corresponding to the minimal and maximal values $s_-$ and $s_+$, and the values $s_1$ and $s_2$ corresponding to the two saddle points, with $s_- < s_1 < s_2 < s_+$. Then 
\begin{displaymath} \beta(h^{-1}(\lk(s,\R)))=
\left\{ \begin{array}{lllll}
0 \textrm{~~if~~} s\notin [s_-,s_+],\\
u+1 \textrm{~~if~~} s\in \{s_-,s_+\},\\
2(u+1) \textrm{~~if~~} s\in (s_-,s_1)\cup (s_2,s_+),\\
3(u+1) \textrm{~~if~~} s\in \{s_1,s_2\},\\
4(u+1) \textrm{~~if~~} s\in (s_1,s_2).\\
\end{array} \right.
\end{displaymath}
whereas 
\begin{displaymath} \beta(h^{-1}(s))=
\left\{ \begin{array}{lllll}
0 \textrm{~~if~~} s\notin [s_-,s_+],\\
1 \textrm{~~if~~} s\in \{s_-,s_+\},\\
u+1 \textrm{~~if~~} s\in (s_-,s_1)\cup (s_2,s_+),\\
u \textrm{~~if~~} s\in \{s_1,s_2\},\\
2(u+1) \textrm{~~if~~} s\in (s_1,s_2).\\
\end{array} \right.
\end{displaymath}
This shows that $\Ls$ is not directly related to $\LS$ in general, due to the different behaviours between the virtual Poincar\'e polynomial and the Euler characteristic with compact supports with respect to integration.
\end{ex}

However, we can define a local link operator in the following sense, which is a motivic counterpart for Proposition \ref{prop-fub}.

\begin{thm}\label{thm-l} There exist a linear mapping $\Ls:\KS \to \KAS$ sending the class of $h:X\to S$ in $\KS$ to the class of $h^{-1}(\lk(s,S))$ in $\KAS$. In particular
$$\beta \circ  \Ls [h:X\to S]=\beta(h^{-1}(\lk(s,S)))$$ 
in $\Z[u]$.
\end{thm}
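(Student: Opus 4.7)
The plan is to define $\Ls$ via the blow-up presentation $\KblS \simeq \KS$ of Theorem \ref{bit}, whose generators are proper regular mappings $h: X\to S$ with $X$ nonsingular, subject to the blow-up relations. For such a generator, I would consider the regular function $g := \|h(\cdot)-s\|^2: X \to \R$. By semialgebraic Sard, the critical values of $g$ form a finite set, so for any sufficiently small $\epsilon>0$ the value $\epsilon^2$ is regular and $X_\epsilon := h^{-1}(\Sph(s,\epsilon)) = g^{-1}(\epsilon^2)$ is a compact (because $h$ is proper and the closed ball around $s$ is compact) nonsingular real algebraic variety. I would set $\Ls[h:X\to S] := [X_\epsilon] \in \KAS$.

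The main obstacle is independence of the choice of small $\epsilon$. For two regular values $\epsilon_1^2,\epsilon_2^2$ lying in an interval $(0,\eta)$ containing no critical values of $g$, the restriction of $g$ to $g^{-1}((0,\eta))$ is a proper submersion, so by Ehresmann's theorem $X_{\epsilon_1}$ and $X_{\epsilon_2}$ are diffeomorphic, hence share the same classical Poincar\'e polynomial. Since both are compact and nonsingular, this translates into $\bAS([X_{\epsilon_1}])=\bAS([X_{\epsilon_2}])$, and Theorem \ref{thm-AS} (the isomorphism $\bAS: \KAS \simeq \Z[u]$) forces $[X_{\epsilon_1}] = [X_{\epsilon_2}]$ in $\KAS$. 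This is where Theorem \ref{thm-AS} is essential; without it we would only have equality of some weaker invariant.

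Next I would check that $\Ls$ respects the blow-up relation. For $C \subset X$ a nonsingular closed subvariety, $\pi: Bl_C X \to X$ the blow-up with exceptional divisor $E$, and $\tilde h := h\circ \pi$, one has $\tilde h^{-1}(\Sph(s,\epsilon)) = \pi^{-1}(h^{-1}(\Sph(s,\epsilon)))$; this splits via the scissor relation in $\KAS$ into a piece off $E$, isomorphic via $\pi$ to $h^{-1}(\Sph(s,\epsilon))\setminus C$, and the piece on $E$, which is $\pi^{-1}(h^{-1}(\Sph(s,\epsilon))\cap C)$. Rearranging, this is exactly the blow-up relation in $\KAS$ applied to $[X_\epsilon]$; the empty case being trivial, this defines $\Ls:\KblS\to \KAS$ and thereby $\Ls:\KS\to\KAS$ via Theorem \ref{bit}.

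Finally, to identify $\Ls[h:X\to S]$ with $[h^{-1}(\lk(s,S))]$ for a general regular mapping, I would express $[h:X\to S]=\sum m_i[h_i:X_i\to S]$ in $\KS$ with each $h_i$ proper and $X_i$ nonsingular. Each defining relation used (scissor, isomorphism, blow-up) is compatible with the assignment $[h':X'\to S]\mapsto [(h')^{-1}(\Sph(s,\epsilon))]$ in $\KAS$: scissor because $(h')^{-1}$ distributes over disjoint unions, isomorphism trivially, and blow-up as above. Taking $\epsilon$ small enough for all the $h_i$ simultaneously, the sum of the link-preimage classes collapses to $[h^{-1}(\Sph(s,\epsilon))]=[h^{-1}(\lk(s,S))]$ in $\KAS$. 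Applying $\bAS$ gives the stated formula $\beta \circ \Ls[h:X\to S]=\beta(h^{-1}(\lk(s,S)))$.
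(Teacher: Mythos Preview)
Your proof is correct and takes a genuinely different route from the paper. The paper handles an arbitrary regular map $h:X\to S$ directly by invoking Corollary~9.7 of \cite{PP} (from the Parusi\'nski--Paunescu arcwise analytic stratification machinery) to conclude that $h^{-1}(\Sph(s,\epsilon))$ and $h^{-1}(\Sph(s,\epsilon'))$ are $\AS$-homeomorphic for small $\epsilon,\epsilon'$; additivity is then immediate from the scissor relation. You instead stay within the paper's own framework: the blow-up presentation (Theorem~\ref{bit}) reduces to proper $h$ with nonsingular source, Ehresmann gives a diffeomorphism between the $X_\epsilon$, and Theorem~\ref{thm-AS} upgrades ``same Poincar\'e polynomial'' to equality in $\KAS$. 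What you gain is avoiding the external result \cite{PP}; what you lose is the stronger geometric fact that the link preimages are genuinely $\AS$-homeomorphic (you only obtain equality of classes, which is all the theorem asserts). Two small points worth tightening: in the blow-up check you are really using a scissor identity in $\KAS$ rather than a ``blow-up relation in $\KAS$'' (it suffices either way); and your final paragraph is tacitly using that for any fixed $\epsilon$ the assignment $[h':X'\to S]\mapsto [(h')^{-1}(\Sph(s,\epsilon))]$ already descends to a homomorphism $\KS\to\KAS$, so that agreement with $\Ls$ on the finitely many $h_i$ in a chosen decomposition forces the desired identity for $h$.
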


\begin{proof}
Let $h:X\to S$ be a regular mapping. Then $h^{-1}(\Sph(s,\epsilon))$ is $\AS$-homeomorphic to $h^{-1}(\Sph(s,\epsilon'))$ for $\epsilon, \epsilon'$ small enough by Corollary 9.7 in \cite{PP}, therefore the class of $h^{-1}(\lk(s,S))$ in $\KAS$ is well-defined. It just depends on the class $[h:X\to S]$ of $h:X\to S$ in $\KS$ because a regular isomorphism is in particular an $\AS$-homeomorphism. Finally if $Y\subset X$ is a closed subvariety, then the class in $\KAS$ of $h^{-1}(\Sph(s,\epsilon))$, $h_Y^{-1}(\Sph(s,\epsilon))$ and $h_{X\setminus Y}^{-1}(\Sph(s,\epsilon))$ are well-defined for $\epsilon$ small enough, and then
$$[h^{-1}(\lk(s,S))]=[h_Y^{-1}(\lk(s,S))]+[h_{X\setminus Y}^{-1}(\lk(s,S))]$$
in $\KAS$, so that $\Ls$ is additive.
\end{proof}

\begin{rem} For a subvariety $X\subset S$, we obtain in particular
$$\Ls([X\to S])=[\lk(s,X)]\in \KAS.$$
\end{rem}
 
 

It makes sense to compare the link $\Ls$ in $\KAS$ of a non-negative function having an isolated zero, with the image in $\MAS$ of the local positive motivic Milnor at that point. The next result says that both classes coincide in the case of a weighted homogeneous polynomial vanishing only at the origin.

\begin{prop} Let $f\in \R[x_1,\ldots,x_d]$ be a non-negative weighted homogeneous polynomial vanishing only at the origin in $\R^d$. Then 
$$\beta(\psi_{f,0}^+)=\Ls [f:\R^d \to \R]=1+u^{d-1}\in \MAS.$$
\end{prop}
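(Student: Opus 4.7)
The plan is to reduce both sides of the equality to the class of the level hypersurface $\{f=1\}$, identified topologically as homotopy equivalent to $\Sph^{d-1}$. For $\Ls[f:\R^d\to\R]$, Theorem~\ref{thm-l} gives $[f^{-1}(\lk(0,\R))]\in\KAS$. Since the link of $0$ in $\R$ is the pair $\{\pm\varepsilon\}$ and non-negativity of $f$ forces $f^{-1}(-\varepsilon)=\emptyset$, this equals $[\{f=\varepsilon\}]$. The weighted scaling $\lambda\cdot x=(\lambda^{w_1}x_1,\dots,\lambda^{w_d}x_d)$ then provides a Nash diffeomorphism $\{f=\varepsilon\}\simeq\{f=1\}$, so $\Ls[f:\R^d\to\R]=[\{f=1\}]$ in $\KAS$.

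For $\beta(\psi_{f,0}^+)$, I would apply Proposition~\ref{propM}. Convenience of $f$ is immediate: the restriction of $f$ to each coordinate axis is a nonzero weighted homogeneous polynomial in one variable, hence a pure monomial $c_i x_i^{d'/w_i}$ appearing in $f$. Non-degeneracy with respect to $\Gamma_f$ amounts to showing $f_\gamma>0$ on $(\R^*)^d$ for every compact face $\gamma$, because $f_\gamma\geq 0$ on $\R^d$ already as the limit $\lim_{s\to 0^+}s^{-m_f(a)}f(s^a\cdot x)$ of non-negative functions. For the main face this is automatic from $f>0$ on $\R^d\setminus\{0\}$. For a proper subface $\gamma$, convenience forces $\gamma_0$ to equal the full main simplex, whose vertices all lie on coordinate axes, so the support $J$ of $\gamma$ is a strict subset of $\{1,\dots,d\}$; an induction on $d$ replacing $f$ by its restriction $g=f|_{x_{J^c}=0}$, a polynomial in $|J|<d$ variables satisfying the same hypotheses, then gives $f_\gamma=g_\gamma>0$ on $(\R^*)^J$. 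Proposition~\ref{propM} now yields $\psi_{f,0}^+=[\{f=1\}]-[\{f=0\}]+1=[\{f=1\}]$ in $\M$, using $\{f=0\}=\{0\}$.

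It remains to compute $\beta([\{f=1\}])$. The set $\{f=1\}$ is a compact nonsingular real algebraic variety of dimension $d-1$: compactness follows from the coercivity of $f$, and smoothness follows from Euler's identity $\sum_i w_i x_i\partial_i f=d'f$, which forces $\nabla f\neq 0$ wherever $f>0$. The weighted radial homotopy $H(x,t)=((1-t)+tf(x)^{-1/d'})\cdot x$ exhibits $\{f=1\}$ as a deformation retract of $\R^d\setminus\{0\}\simeq\Sph^{d-1}$, so Theorem~\ref{thm-virt} gives $\beta([\{f=1\}])=1+u^{d-1}$, the common value of $\Ls[f:\R^d\to\R]$ (under $\KAS\cong\Z[u]$) and of $\beta(\psi_{f,0}^+)$.

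The main obstacle is verifying the non-degeneracy hypothesis required by Proposition~\ref{propM}. The inductive argument above handles this cleanly by exploiting the convenient hypothesis to reduce to fewer variables; alternatively, one could bypass Proposition~\ref{propM} by computing $\psi_{f,0}^+$ directly from a weighted blow-up of $\R^d$ at the origin, whose exceptional divisor carries a positive double cover Nash diffeomorphic to $\{f=1\}$.
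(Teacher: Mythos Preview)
Your proof is correct and follows essentially the same route as the paper's: apply Proposition~\ref{propM} to get $\psi_{f,0}^+=[\{f=1\}]$, identify $\Ls[f:\R^d\to\R]$ with $[f^{-1}(\varepsilon)]\simeq[\{f=1\}]$ via weighted scaling, and compute $\beta(\{f=1\})=1+u^{d-1}$.

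Two points of comparison are worth noting. First, the paper simply asserts that $f$ is non-degenerate with respect to its Newton polyhedron, whereas you supply an argument; your argument is correct (the key observation being that $\gamma_0$ is the full simplex on the axis vertices, so each proper face $\gamma$ is supported on a strict subset $J$ and $f_\gamma$ coincides with $f|_{x_{J^c}=0}$, which is positive off the origin by hypothesis). The induction you invoke is in fact unnecessary: once $f_\gamma=g$ with $g=f|_{x_{J^c}=0}$, positivity of $g$ on $\R^J\setminus\{0\}$ is immediate from $f>0$ on $\R^d\setminus\{0\}$. Second, the paper states that $f^{-1}(\varepsilon)$ is \emph{homeomorphic} to $\Sph^{d-1}$, while you establish only a homotopy equivalence via the weighted radial retraction. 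Your version is both easier to justify and entirely sufficient, since Theorem~\ref{thm-virt} computes $\beta$ from the $\Z/2$-homology of a compact nonsingular variety, which is a homotopy invariant.
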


\begin{proof}
Note that $f$ is necessarily non-degenerate with respect to its Newton polyhedron, so that one can apply Proposition \ref{propM} to obtain $\psi_{f,0}^+=[\{f=1\}]$ in $\M$. By homogeneity of $f$, the set $\{f=1\}$ is isomorphic to $f^{-1}(\eps)$ for positive $\epsilon\in \R$, so that $\psi_{f,0}^+=f^{-1}(\eps)$ in $\M$. Note moreover that $f^{-1}(\eps)$ is a nonsingular algebraic set homeomorphic to a $(d-1)$-dimensional sphere, so that its virtual Poincar\'e polynomial is equal to $1+u^{d-1}$ by Theorem \ref{thm-virt}.
\end{proof}

\begin{rem}
However the link $\Ls$ and the local positive motivic Milnor do not coincide in general, as illustrated by Example \ref{ex-mot}.(\ref{ex2}) where the local positive motivic Milnor vanishes.
\end{rem}


\enddocument
\begin{thebibliography}{99}
 \bibitem{W} D. Abramovich, K. Karu, K. Matsuki and
     J. Wlodarczyk,  \textit{Torification and factorization of
       birational maps}, J. Amer. Math. Soc. 15 no. 3, (2002) 531-572
       
\bibitem{B} F. Bittner : \textit{The universal Euler characteristic for varieties of characteristic zero}, Compos. Math. 140 (2004), 1011-1032


\bibitem{B2} F. Bittner : \textit{On motivic zeta functions and the motivic nearby fiber}, Math. Z. 249 (2005), no. 1, 63-83

\bibitem{BCR} J. Bochnak, M. Coste, M.-F. Roy :
 G\'eom\'etrie alg\'ebrique r\'eelle, 
Ergeb. Math. Grenzgeb. 3 
Folge, 12. Berlin Heidelberg New York, Springer 1987

\bibitem{BV} B. Bories, W. Veys :\textit{Igusa's $p$-adic local zeta function and the monodromy conjecture for non-degenerated surface singularities}, Mem. Amer. Math. Soc. 242 (2016), no. 1145

\bibitem{Bo} L. Borisov : \textit{Class of the affine line is a zero divisor in the Grothendieck ring}, arXiv:1412.6194

\bibitem{C} J.-B. Campesato : \textit{On a motivic invariant of the arc-analytic equivalence}, Annales de l'Institut Fourier 7 (2017), 143-196

\bibitem{C3} J.-B. Campesato : \textit{From the blow-analytic equivalence to the arc-analytic equivalence: a survey}, Proceedings of the sixth Japanese-Australian Workshop on Real \& Complex Singularities, Saitama Math. J., to appear

\bibitem{C2} J.-B. Campesato : \textit{On the arc-analytic type of some weighted homogeneous polynomials}, arXiv:1612.08269 

\bibitem{CF} G. Comte, G. Fichou : \textit{Grothendieck ring of semialgebraic formulas and motivic real Milnor fibres}, Geom. Topol. 18 (2014), no. 2, 963-996 

\bibitem{Co} M. Coste : \textit{Real algebraic sets}, Arc spaces and additive invariants in real algebraic and analytic geometry, 1-32, Panor. Synthèses, 24, Soc. Math. France, Paris, 2007

\bibitem{CL} R. Cluckers, F. Loeser : \textit{Constructible motivic functions and motivic integration}, Invent. Math. 173 (2008), no. 1, 23-121

\bibitem{DL} J. Denef, F. Loeser : \textit{Germs of arcs on singular algebraic varieties and motivic integration}, Invent. Math. 135 (1999), 201-232

\bibitem{D} J. K. Deveney : \textit{Ruled function fields}, Proc. Amer. Math. Soc. 86 (1982), no. 2, 213-215
 
\bibitem{vdd} L. van den Dries : Tame topology and o-minimal structures, London Mathematical Society Lecture Note Series, 248 Cambridge University Press, Cambridge, 1998

\bibitem{F} G. Fichou : \textit{Motivic invariants of arc-symmetric sets and blow-Nash equivalence}, Compos. Math. 141 (2005), no. 3, 655-688

\bibitem{FF} G. Fichou, T. Fukui : \textit{Motivic invariant of real polynomial functions and Newton polyhedron}, Math. Proc. Camb. Phil. Soc. 160 (1) (2016) 141-166 

\bibitem{FHMM} G. Fichou, J. Huisman, F. Mangolte, J.-P. Monnier : \textit{Fonctions r\'egulues}, J. Reine Angew. Math. 718 (2016), 103-151


\bibitem{FS} G. Fichou, M. Shiota : \textit{Virtual Poincar\'e polynomial of the link of a real algebraic variety}, Math. Z. 273 (2013), no. 3-4, 1053-1061

\bibitem{G} M.  Gromov : \textit{Endomorphisms  of  symbolic  algebraic  varieties},
J. Eur. Math. Soc. (JEMS), 1 (2) (1999), 109-197


\bibitem{KoP} S. Koike, A. Parusi\'nski : \textit{Motivic-type invariants of blow-analytic equivalence}, Ann. Inst. Fourier (Grenoble) 53 (2003), no. 7, 2061-2104

\bibitem{Ko} J. Koll\'ar, K. Nowak : \textit{Continuous rational functions on real and $p$-adic varieties}, Math. Z. 279 (2015), no. 1-2, 85-97

\bibitem{Ku} W. Kucharz : {\it Rational maps in real algebraic geometry},
Adv. Geom.~\textbf{9} (4), 517--539, 2009.

\bibitem{K} K. Kurdyka : {\it Ensemble semi-alg\'ebriques sym\'etriques par arcs},
Math. Ann.~\textbf{282}, 445-462, 1988.

\bibitem{KP} K. Kurdyka, A. Parusi\'nski :  \textit{Arc-symmetric sets and arc-analytic mappings}, Arc spaces and additive invariants in real algebraic and analytic geometry, 33-67, Panor. Synth\`eses, 24, Soc. Math. France, Paris, 2007

\bibitem{LL} M. Larsen and V. A. Lunts : \textit{Motivic measures and stable birational geometry}, Mosc. Math.
J., 3(1) (2003), 85-95

\bibitem{looi} E. Looijenga : \textit{Motivic measures}, S\'eminaire Bourbaki, vol. 1999/2000, Ast\'erisque 276 (2002), 267-297

\bibitem{MCP} C. McCrory, A. Parusi\'nski : \textit{Algebraically constructible functions}, Ann. Sci. \'Ecole Norm. Sup. (4) 30 (1997), no. 4, 527-552


\bibitem{MCPvirt} C. McCrory, A. Parusi\'nski : \textit{Virtual Betti numbers of real algebraic varieties}, C. R. Math. Acad. Sci. Paris 336 (2003), no. 9, 763-768

\bibitem{MCPI} C. McCrory, A. Parusi\'nski : \textit{The weight filtration for real algebraic varieties}, Topology of stratified spaces, 121-160,
Math. Sci. Res. Inst. Publ., 58, Cambridge Univ. Press, Cambridge, 2011

\bibitem{NS} J. Nicaise, J. Sebag : \textit{Motivic Serre invariants, ramification, and the analytic Milnor fiber},  Invent. Math. 168 (2007), no. 1, 133-173

\bibitem{P} A. Parusi\'nski : \textit{Topology of injective endomorphisms of real algebraic sets},
Math. Ann. ~\textbf{328} (2004) no. 1-2, 353-372

\bibitem{PP} A. Parusi\'nski, L. Paunescu : \textit{Arcwise Analytic Stratification, Whitney Fibering Conjecture and Zariski Equisingularity},  Adv. Math. 309 (2017), 254-305

\bibitem{Poo} B. Poonen : \textit{The Grothendieck ring of varieties is not a domain}, Math. Res. Lett. 9 (2002), no. 4, 493-497

\bibitem{Pr} F. Priziac : \textit{Equivariant zeta functions for invariant Nash germs}, Nagoya M. J., 2016, 222, 100-136

\bibitem{P2} F. Priziac : \textit{On the equivariant blow-Nash classification of simple invariant Nash germs}, arXiv:1606.00303

\bibitem{Q} R. Quarez : \textit{Espace des germes d'arcs r\'eels et s\'erie de Poincar\'e d'un ensemble semi-alg\'ebrique}, Ann. Inst. Fourier (Grenoble) 51 (2001), no. 1, 43-68.

\bibitem{Sch} P. Schapira : \textit{Operations on constructible functions}, J. Pure Appl. Algebra 72 (1991), no. 1, 83-93

\bibitem{Shiota} M. Shiota : Nash manifolds, Lecture Notes in Mathematics, 1269. Springer-Verlag, Berlin, 1987


\bibitem{V} O. Viro : \textit{Some integral calculus based on Euler characteristic}, Topology and geometry-Rohlin Seminar, 127-138, Lecture Notes in Math., 1346, Springer, Berlin, 1988


\end{thebibliography}
